\DeclareMathAlphabet\gothic{U}{euf}{m}{n}
\def\eqnarray{\stepcounter{equation}\let\@currentlabel=\theequation
\global\@eqnswtrue
\tabskip\@centering\let\\=\@eqncr
$$\halign to \displaywidth\bgroup\hfil\global\@eqcnt\z@
  $\displaystyle\tabskip\z@{##}$&\global\@eqcnt\@ne
  \hfil$\displaystyle{{}##{}}$\hfil
  &\global\@eqcnt\tw@ $\displaystyle{##}$\hfil
  \tabskip\@centering&\llap{##}\tabskip\z@\cr}
\def\endeqnarray{\@@eqncr\egroup
      \global\advance\c@equation\m@ne$$\global\@ignoretrue}
\def\@yeqncr{\@ifnextchar [{\@xeqncr}{\@xeqncr[5pt]}}
\begin{document}
\bibliographystyle{tom}

\newtheorem{lemma}{Lemma}[section]
\newtheorem{thm}[lemma]{Theorem}
\newtheorem{cor}[lemma]{Corollary}
\newtheorem{prop}[lemma]{Proposition}

\theoremstyle{definition}

\newtheorem{remark}[lemma]{Remark}
\newtheorem{exam}[lemma]{Example}
\newtheorem{definition}[lemma]{Definition}

\newcommand{\gota}{\gothic{a}}
\newcommand{\gotb}{\gothic{b}}
\newcommand{\gotc}{\gothic{c}}
\newcommand{\gote}{\gothic{e}}
\newcommand{\gotf}{\gothic{f}}
\newcommand{\gotg}{\gothic{g}}
\newcommand{\gothh}{\gothic{h}}
\newcommand{\gotk}{\gothic{k}}
\newcommand{\gotm}{\gothic{m}}
\newcommand{\gotn}{\gothic{n}}
\newcommand{\gotp}{\gothic{p}}
\newcommand{\gotq}{\gothic{q}}
\newcommand{\gotr}{\gothic{r}}
\newcommand{\gots}{\gothic{s}}
\newcommand{\gott}{\gothic{t}}
\newcommand{\gotu}{\gothic{u}}
\newcommand{\gotv}{\gothic{v}}
\newcommand{\gotw}{\gothic{w}}
\newcommand{\gotz}{\gothic{z}}
\newcommand{\gotA}{\gothic{A}}
\newcommand{\gotB}{\gothic{B}}
\newcommand{\gotG}{\gothic{G}}
\newcommand{\gotL}{\gothic{L}}
\newcommand{\gotS}{\gothic{S}}
\newcommand{\gotT}{\gothic{T}}

\newcounter{teller}
\renewcommand{\theteller}{(\alph{teller})}
\newenvironment{tabel}{\begin{list}%
{\rm  (\alph{teller})\hfill}{\usecounter{teller} \leftmargin=1.1cm
\labelwidth=1.1cm \labelsep=0cm \parsep=0cm}
                      }{\end{list}}

\newcounter{tellerr}
\renewcommand{\thetellerr}{(\roman{tellerr})}
\newenvironment{tabeleq}{\begin{list}%
{\rm  (\roman{tellerr})\hfill}{\usecounter{tellerr} \leftmargin=1.1cm
\labelwidth=1.1cm \labelsep=0cm \parsep=0cm}
                         }{\end{list}}

\newcounter{tellerrr}
\renewcommand{\thetellerrr}{(\Roman{tellerrr})}
\newenvironment{tabelR}{\begin{list}%
{\rm  (\Roman{tellerrr})\hfill}{\usecounter{tellerrr} \leftmargin=1.1cm
\labelwidth=1.1cm \labelsep=0cm \parsep=0cm}
                         }{\end{list}}

\newcounter{proofstep}
\newcommand{\nextstep}{\refstepcounter{proofstep}\vertspace \par 
          \noindent{\bf Step \theproofstep} \hspace{5pt}}
\newcommand{\firststep}{\setcounter{proofstep}{0}\nextstep}

\numberwithin{equation}{section}

\newcommand{\Ni}{\mathds{N}}
\newcommand{\Qi}{\mathds{Q}}
\newcommand{\Ri}{\mathds{R}}
\newcommand{\Ci}{\mathds{C}}
\newcommand{\Ti}{\mathds{T}}
\newcommand{\Zi}{\mathds{Z}}
\newcommand{\Fi}{\mathds{F}}
\newcommand{\Ki}{\mathds{K}}

\renewcommand{\proofname}{{\bf Proof}}

\newcommand{\simh}{{\stackrel{{\rm cap}}{\sim}}}
\newcommand{\ad}{{\mathop{\rm ad}}}
\newcommand{\Ad}{{\mathop{\rm Ad}}}
\newcommand{\alg}{{\mathop{\rm alg}}}
\newcommand{\clalg}{{\mathop{\overline{\rm alg}}}}
\newcommand{\Aut}{\mathop{\rm Aut}}
\newcommand{\arccot}{\mathop{\rm arccot}}
\newcommand{\capp}{{\mathop{\rm cap}}}
\newcommand{\rcapp}{{\mathop{\rm rcap}}}
\newcommand{\diam}{\mathop{\rm diam}}
\newcommand{\divv}{\mathop{\rm div}}
\newcommand{\dom}{\mathop{\rm dom}}
\newcommand{\codim}{\mathop{\rm codim}}
\newcommand{\RRe}{\mathop{\rm Re}}
\newcommand{\IIm}{\mathop{\rm Im}}
\newcommand{\tr}{{\mathop{\rm Tr \,}}}
\newcommand{\Tr}{{\mathop{\rm Tr \,}}}
\newcommand{\Vol}{{\mathop{\rm Vol}}}
\newcommand{\card}{{\mathop{\rm card}}}
\newcommand{\rank}{\mathop{\rm rank}}
\newcommand{\supp}{\mathop{\rm supp}}
\newcommand{\sgn}{\mathop{\rm sgn}}
\newcommand{\essinf}{\mathop{\rm ess\,inf}}
\newcommand{\esssup}{\mathop{\rm ess\,sup}}
\newcommand{\Int}{\mathop{\rm Int}}
\newcommand{\lcm}{\mathop{\rm lcm}}
\newcommand{\loc}{{\rm loc}}
\newcommand{\HS}{{\rm HS}}
\newcommand{\Trn}{{\rm Tr}}
\newcommand{\n}{{\rm N}}
\newcommand{\WOT}{{\rm WOT}}

\newcommand{\at}{@}

\newcommand{\spann}{\mathop{\rm span}}
\newcommand{\one}{\mathds{1}}

\hyphenation{groups}
\hyphenation{unitary}

\renewcommand{\tfrac}[2]{{\textstyle \frac{#1}{#2}}}

\newcommand{\ca}{{\cal A}}
\newcommand{\cb}{{\cal B}}
\newcommand{\cc}{{\cal C}}
\newcommand{\cd}{{\cal D}}
\newcommand{\ce}{{\cal E}}
\newcommand{\cf}{{\cal F}}
\newcommand{\ch}{{\cal H}}
\newcommand{\chs}{{\cal HS}}
\newcommand{\ci}{{\cal I}}
\newcommand{\ck}{{\cal K}}
\newcommand{\cl}{{\cal L}}
\newcommand{\cm}{{\cal M}}
\newcommand{\cn}{{\cal N}}
\newcommand{\co}{{\cal O}}
\newcommand{\cp}{{\cal P}}
\newcommand{\cs}{{\cal S}}
\newcommand{\ct}{{\cal T}}
\newcommand{\cx}{{\cal X}}
\newcommand{\cy}{{\cal Y}}
\newcommand{\cz}{{\cal Z}}

\thispagestyle{empty}

\vspace*{1cm}
\begin{center}
{\Large\bf Strict positivity for the principal eigenfunction of \\[5pt]
elliptic operators with various boundary conditions} \\[10mm]
\large W. Arendt$^1$, A.F.M. ter Elst$^2$ and J. Gl\"uck$^3$

\end{center}

\vspace{5mm}

\begin{center}
{\bf Abstract}
\end{center}

\begin{list}{}{\leftmargin=1.7cm \rightmargin=1.7cm \listparindent=10mm 
   \parsep=0pt}
\item
We consider elliptic operators with measurable coefficients and
Robin boundary conditions 
on a bounded domain $\Omega \subset \Ri^d$ and show that the first eigenfunction
$v$ satisfies $v(x) \ge \delta > 0$ for all $x \in \overline{\Omega}$, even 
if the boundary $\partial \Omega$ is only Lipschitz continuous.
Under such weak regularity assumptions 
the Hopf--Ole{\u{\i}}nik boundary lemma is not available;
instead we use a new approach based on an abstract positivity improving condition
for semigroups that map $L_p(\Omega)$ into $C(\overline{\Omega})$.
The same tool also yields corresponding results for Dirichlet or mixed boundary 
conditions.

Finally, we show that our results can be used to derive strong minimum and maximum
principles for parabolic and elliptic equations.

\end{list}

\vspace{1cm}
\noindent
July 2020

\vspace{5mm}
\noindent
Mathematics Subject Classification: 35P15, 35B50, 35K08.

\vspace{5mm}
\noindent
Keywords: maximum principle, irreducible semigroup, elliptic operator.

\vspace{1mm}

\noindent
{\bf Home institutions:}    \\[3mm]
\begin{tabular}{@{}cl@{\hspace{10mm}}cl}
1. & Institute of Applied Analysis  & 
  2. & Department of Mathematics   \\
& University of Ulm   & 
  & University of Auckland   \\
& Helmholtzstr.\ 18 & 
  & Private bag 92019  \\
& 89081 Ulm & 
  & Auckland  \\
& Germany  & 
  & New Zealand  \\
& email: wolfgang.arendt@uni-ulm.de &
  & email: terelst@math.auckland.ac.nz   \\[8mm]
\end{tabular}  \\
\begin{tabular}{@{}cl@{\hspace{10mm}}cl}
3. &Faculty of Computer Science and Mathematics  \\
&University of Passau  \\
&Innstra{\ss}e 33  \\
&94032 Passau  \\
&Germany  \\
& email: Jochen.Glueck@uni-passau.de
\end{tabular}

\newpage

\section{Introduction} \label{Smax1}

A frequent situation occurring in the study of elliptic but also parabolic boundary value 
problems with real coefficients on a bounded domain $\Omega \subset \Ri^d$
is the following.
The solutions satisfy a weak maximum principle and there exists a principal eigenvalue
with a principal eigenfunction $u_0$ satisfying $u_0(x) > 0$ a.e.\ on $\Omega$.
By elliptic regularity one also shows that $u_0 \in C(\overline{\Omega})$.
But what is not known is whether $u_0(x) \ge \delta > 0$ for all $x \in\overline{\Omega}$.
We shall show this under very weak regularity assumptions.
Such a result has applications for the construction of super- and subsolutions 
(see Daners--L\'opez-G\'omez \cite{DanersLG1}), but also for the asymptotic behaviour of 
parabolic problems.

Let us describe a concrete situation.
Let $\Omega \subset \Ri^d$ be a bounded Lipschitz domain and $\beta \in L_\infty(\partial \Omega)$.
Given $u_0 \in C(\overline{\Omega})$ there exists a unique 
$u \in C([0,\infty) \times \overline{\Omega}) \cap C^\infty((0,\infty) \times \Omega)$ 
such that 
\[
\begin{array}{r@{}c@{}l@{\quad} l}
	\tfrac{\partial}{\partial t} u & {} = {} & \Delta u, \\[5pt]
	u(0,x) & = & u_0(x) & \mbox{for all } x \in \overline{\Omega}, \\[5pt]
	(\partial_\nu u)(t,z) + \beta(z) \, u(t,z) & = & 0 &
        \mbox{for all } z \in \partial \Omega \mbox{ and } t > 0.
\end{array}
\]
We shall show in Theorem~\ref{tmax501}
that if $u_0(x) \ge 0$ and $u_0 \neq 0$, then $u(t,x) > 0$ for all 
$x \in \overline{\Omega}$ and $t > 0$. 
This implies in particular that the principal eigenfunction $v \in C(\overline{\Omega})$ 
of the Robin Laplacian is strictly positive; that is, there exists a $\delta > 0$
such that $v(x) \geq \delta$ for all $x \in \overline \Omega$.
If $\partial \Omega$ and the eigenfunction are smooth enough, 
this property is known and can then be deduced 
from Hopf's maximum principle \cite{Bony, Lions} (on the interior) and the 
Hopf--Ole{\u{\i}}nik boundary lemma (see for instance \cite{AN}).
We shall prove the result for Lipschitz domains and arbitrary elliptic operators in 
divergence form with bounded real measurable coefficients, without any assumptions on the 
smoothness of the eigenfunction.
This new result 
is important for applications to non-linear problems 
(see for example \cite{DanersLG1}).

Our arguments are best placed in a more abstract situation. 
Let $S$ be a $C_0$-semigroup on $L_2(\Omega)$
which is positive and holomorphic.
Then $S$ is irreducible (see below for the definition) if and only if 
$S$ is positivity improving in the sense that 
if $u \geq 0$ and $u \neq 0$, then for each $t > 0$ one has 
$(S_t u)(x) > 0$ for almost every $x \in \Omega$. 
Irreducibility on $L_2(\Omega)$
is very easy to prove by the use of the Beurling--Deny--Ouhabaz criterion
\cite[Theorem~2.10]{Ouh5} 
and implies for the principal eigenfunction $v$ that $v(x) > 0$ almost
everywhere. In contrast to this, irreducibility in $C(\overline{\Omega})$ is much 
stronger: it implies that $v(x) \ge \delta > 0$ for all $x \in \overline{\Omega}$ and
some $\delta > 0$. Our main argument in Section~\ref{Smax3} shows that irreducibility
in $L_2(\Omega)$ already implies irreducibility in $C(\overline{\Omega})$ if
$S_t L_2(\Omega) \subset C(\overline{\Omega})$ for all $t > 0$ and if 
$(S_t|_{C(\overline{\Omega})})_{t > 0}$ is a $C_0$-semigroup on $C(\overline{\Omega})$
(see Theorem~\ref{tmax306} and Corollary~\ref{cmax301}).

In Section~\ref{Smax4} we 
will apply this result not only to elliptic problems with Robin boundary conditions,
but also to mixed boundary conditions, where we impose Neumann boundary conditions 
on a relatively open subset $N$ of $\partial \Omega$ and where we prove that 
$(S_t u)(x) > 0$ for all $t > 0$ and $x \in \Omega \cap N$ whenever $u \ge 0$ and $u \neq 0$.

In Section~\ref{Smax5} we will also prove a strong
minimum principle for the heat equation.
Given a continuous function $\psi$ on the parabolic boundary $\partial^* \Omega_T$
of the cylinder $\Omega_T = (0,T) \times \Omega$, there is a unique solution 
$u \in C(\overline{\Omega_T})$ of the heat equation $u_t = \Delta u$ which coincides
with $\psi$ on $\partial^* \Omega_T$.
We shall show that if $\psi \ge 0$ and $u(t_0,x_0) = 0$ for some $(t_0,x_0) \in \Omega_T$, 
then  $u(t,x) = 0$ for all $(t,x) \in \Omega_T$ such that $t < t_0$.
Again, this also remains true if the Laplacian $\Delta$ is replaced with
an elliptic operator (see Theorem~\ref{tmax504}). As a nice consequence, we also obtain 
a new proof of the strong parabolic maximum principle for elliptic operators
in divergence form with bounded real measurable coefficients.

The paper is organised as follows.
After a general introduction to irreducibility in Section~\ref{Smax2}, we establish 
our main abstract result in Section~\ref{Smax3}.
Principal eigenvectors for elliptic problems with diverse boundary conditions are 
considered in Section~\ref{Smax4} and the strong
minimum principle
is established in 
Section~\ref{Smax5}.
For the parabolic operator we have two notions of solutions: mild and weak. 
For the mild solution we do not need any regularity on the coefficients 
of the operator, but in order to define the weak solutions we need some 
differentiability. 
Under these differentiability conditions we show that weak solutions and 
mild solutions are equivalent.
For the latter equivalence we need a regularity result, for which we 
provide an elementary proof in the appendix.

\section{Preliminaries: Irreducibility} \label{Smax2}

In this section we recall the notions 
of positivity and irreducibility as well as
some results which are used later.
General references for this topic are \cite{Nag} and \cite{BatkaiKR}.

Throughout the section, let $E$ be a Banach lattice over $\Ki$, where the field 
$\Ki$ is either $\Ri$ or $\Ci$. We are especially interested 
in the following cases.

\begin{exam} \label{xmax201}
Let $\Omega \subset \Ri^d$ be an open non-empty set.
The following spaces are examples of Banach lattices.
\begin{tabel}
\item \label{xmax201-1}
$E = L_p(\Omega)$, where $p \in [1,\infty)$.
\item \label{xmax201-2}
$E = C(\overline \Omega)$, if $\Omega$ is bounded.
\item \label{xmax201-3}
$E = C_0(\Omega)$, the closure in $L_\infty(\Omega)$
of the space $C_c(\Omega)$ of all continuous functions with 
compact support.
\end{tabel}
\end{exam}

Let $E_+ = \{ u \in E : u \geq 0 \} $ be the {\bf positive cone} of $E$.
An {\bf ideal} of $E$ is a subspace $J$ of $E$ such that 
\begin{tabel}
\item 
if $u \in J$, then $|u| \in J$ and 
\item
if $u \in J$, $v \in E$ and $0 \leq v \leq u$, then $v \in J$.
\end{tabel}
The closure of an ideal is again an ideal.
The closed ideals can be characterised in the case of Example~\ref{xmax201}.
Let $\Omega \subset \Ri^d$ be an open set.
If $p \in [1,\infty)$ and 
$E = L_p(\Omega)$, then $J \subset E$ is a closed ideal if and only if 
there exists a measurable subset $B$ of $\Omega$ such that 
$J = \{ u \in E : u|_B = 0 \mbox{ almost everywhere} \} $
(see \cite[Section~III.1, Example~1]{Schae2}).
If $\Omega$ is bounded, and $E = C(\overline \Omega)$, then 
$J \subset E$ is a closed ideal if and only if 
there exists a closed set $B \subset \overline \Omega$ 
such that $J = \{ u \in C(\overline \Omega) : u|_B = 0 \} $ 
(see \cite[Section~III.1, Example~2]{Schae2}).
Finally, if $E = C_0(\Omega)$, then 
$J \subset E$ is a closed ideal if and only if 
there exists a closed set $B \subset \Omega$ 
such that $J = \{ u \in C_0(\Omega) : u|_B = 0 \} $ 
(see \cite[Proposition~10.14]{BatkaiKR}).

Note that $u \geq 0$ in $L_p(\Omega)$ means that $u(x) \in [0,\infty)$
for {\em almost} every $x \in \Omega$, whilst $u \geq 0$ in $C(\overline \Omega)$
means that $u(x) \in [0,\infty)$ for {\em all} $x \in \Omega$.
We write $u > 0$ if $u \geq 0$ and $u \neq 0$.
Note that $u \neq 0$ in $L_p(\Omega)$ means that 
$ \{ x \in \Omega : u(x) \neq 0 \} $ is not a null set.

If $u \geq 0$, then we denote by 
\[
E_u = \{ v \in E : \mbox{there exists an $n \in \Ni$ such that } |v| \leq n \, u \}
\]
the {\bf principal ideal} generated by $u$. 
It is easy to verify that this is indeed an ideal.
We write $u \gg 0$ if $\overline{E_u} = E$. 
In the literature of Banach lattices, such an element $u$ is called a 
{\bf quasi-interior point}.
As a remark, quasi-interior points can be characterized by an approximation 
condition.
Schaefer \cite[Theorem II.6.3]{Schae2} proved that 
a vector $u \in E_+$ is a quasi-interior point if and only if 
$\lim_{n \to \infty} v \land nu = v$ for every $v \in E_+$.

If $E = L_p(\Omega)$, then $u \gg 0$ if and only if 
$u(x) > 0$ for almost every $x \in \Omega$.
If $\Omega$ is bounded and $E = C(\overline \Omega)$, 
then $u \gg 0$ if and only if $u(x) > 0$ for all $x \in \overline \Omega$. 
So by compactness, $u \gg 0$ if and only if there exists a $\delta > 0$
such that $u(x) \geq \delta$ for all $x \in \overline \Omega$, 
which is the case if and only if $u$ is an interior point of the 
positive cone $E_+$.
If $E = C_0(\Omega)$, then $u \gg 0$ if and only if 
$u(x) > 0$ for all $x \in \Omega$. 
Note that the interior of $E_+$ is empty if $E = C_0(\Omega)$ or 
$E = L_p(\Omega)$.

Let also $F$  be a Banach lattice.
A linear map $R \colon E \to F$ is called {\bf positive} 
if $R E_+ \subset F_+$.
Positivity implies that $R$ is continuous by \cite[Theorem~II.5.3]{Schae2}.
We write $R \geq 0$ to express that $R$ is positive.
The set of all positive linear functionals on $E$ is denoted by~$E'_{\,+}$.
If $E = L_p(\Omega)$, then $E'_{\,+} = L_{p'}(\Omega)_+$, where 
$p' \in (1,\infty]$ is the dual exponent.
If $\Omega$ is bounded and 
$E = C(\overline \Omega)$, then $E'_{\,+}$ is isomorphic to 
all finite (positive) Borel measures on $\overline \Omega$.
If $E = C_0(\Omega)$, then $E'_{\,+}$ is isomorphic to 
all (positive) finite Borel measures on $\Omega$.
For a proof of the last two statements, see \cite[Theorem~14.1]{HR1}.

An operator $R \colon E \to F$ is called {\bf positivity improving}
if $R u \gg 0$ in $F$ for all $u \in E$ with $u > 0$.
Positivity improving operators will be of central interest in this paper.

By a {\bf semigroup} on $E$ we mean a map $S \colon (0,\infty) \to \cl(E)$
such that $S_{t+s} = S_t \, S_s$ for all $s,t \in (0,\infty)$,
where we write $S_t = S(t)$.
We say that $S$ is a {\bf $C_0$-semigroup} if in addition 
$\lim_{t \downarrow 0} S_t u = u$ for all $u \in E$.
A semigroup $S$ is called {\bf positive} if $S_t \geq 0$ for all $t > 0$
and $S$ is called {\bf positivity improving} if $S_t$ is positivity improving
for all $t > 0$.
A semigroup $S$ is called {\bf irreducible} if it does not leave invariant
any nontrivial closed ideal; that is, 
if $J \subset E$ is a closed ideal and $S_t J \subset J$ for all $t > 0$,
then $J = E$ or $J = \{ 0 \} $.

Irreducibility is independent of $p$ for compatible semigroups.

\begin{lemma} \label{lmax202}
Let $\Omega \subset \Ri^d$ be open and $p_1,p_2 \in [1,\infty)$.
Let $S^{(1)}$ and $S^{(2)}$ be semigroups on $L_{p_1}(\Omega)$ 
and $L_{p_2}(\Omega)$.
Suppose that $S^{(1)}$ and $S^{(2)}$ are compatible, that is we have
$S^{(1)}_t u = S^{(2)}_t u$ almost everywhere for all $t > 0$ and 
$u \in L_{p_1}(\Omega) \cap L_{p_2}(\Omega)$.
Then $S^{(1)}$ is irreducible if and only if $S^{(2)}$ is irreducible.
\end{lemma}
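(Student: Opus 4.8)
The plan is to reduce everything to the description of the closed ideals of $L_p(\Omega)$ recalled above. Every closed ideal of $L_p(\Omega)$ has the form $J_B^{(p)} := \{ u \in L_p(\Omega) : u|_B = 0 \text{ almost everywhere} \}$ for some measurable $B \subset \Omega$, and such an ideal is nontrivial precisely when neither $B$ nor $\Omega \setminus B$ is a null set. The key point is that the \emph{same} measurable set $B$ parametrises closed ideals in $L_{p_1}(\Omega)$ and in $L_{p_2}(\Omega)$. I would first record the auxiliary density fact: writing $D_B := \{ u \in L_{p_1}(\Omega) \cap L_{p_2}(\Omega) : u|_B = 0 \text{ a.e.} \}$, the subspace $D_B$ is dense both in $J_B^{(p_1)}$ and in $J_B^{(p_2)}$. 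To see this, given $u$ in one of these ideals, approximate it in the relevant $L_p$-norm by simple functions $s_n$ with $|s_n| \le |u|$; since $\Omega$ is $\sigma$-finite and $p_i < \infty$ these have support of finite measure, and after multiplying by $\one_{\Omega \setminus B}$ they lie in $D_B$ while still converging to $u$.

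By symmetry it then suffices to show that if $S^{(2)}$ is not irreducible, then $S^{(1)}$ is not irreducible. Assume $J$ is a nontrivial closed $S^{(2)}$-invariant ideal of $L_{p_2}(\Omega)$ and write $J = J_B^{(p_2)}$, so that neither $B$ nor $\Omega \setminus B$ is null. I claim $J_B^{(p_1)}$ is a nontrivial closed $S^{(1)}$-invariant ideal of $L_{p_1}(\Omega)$, which finishes the proof. Nontriviality is immediate from the choice of $B$. For invariance, let $u \in D_B$ and $t > 0$. Then $u \in J_B^{(p_2)}$, hence $S^{(2)}_t u \in J_B^{(p_2)}$, that is $(S^{(2)}_t u)|_B = 0$ a.e.; by compatibility $S^{(1)}_t u = S^{(2)}_t u$ a.e., so $(S^{(1)}_t u)|_B = 0$ a.e., i.e.\ $S^{(1)}_t u \in J_B^{(p_1)}$. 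Since $S^{(1)}_t \in \cl(L_{p_1}(\Omega))$ is bounded, $J_B^{(p_1)}$ is closed, and $D_B$ is dense in $J_B^{(p_1)}$, it follows that $S^{(1)}_t J_B^{(p_1)} \subset J_B^{(p_1)}$ for every $t > 0$.

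The only step that requires any real care is the density of $D_B$ in the two ideals; the rest is bookkeeping with the set--ideal correspondence and the compatibility hypothesis. A minor subtlety is that $B$ is determined only up to a null set, but since null sets affect neither the definition of $J_B^{(p)}$ nor the almost-everywhere identity furnished by compatibility, fixing any representative of $B$ is harmless. I expect no serious obstacle: once the ideal structure of $L_p(\Omega)$ is in hand, the lemma is essentially a translation exercise.
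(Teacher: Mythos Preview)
Your proof is correct and follows essentially the same route as the paper: both reduce to the set-parametrisation of closed ideals in $L_p(\Omega)$ and show that invariance of $J_B^{(p_2)}$ under $S^{(2)}$ forces invariance of $J_B^{(p_1)}$ under $S^{(1)}$ via a density argument for $L_{p_1}\cap L_{p_2}$ inside the ideal. The only cosmetic differences are that the paper uses the explicit truncations $u\,\one_{V_n}$ with $V_n=\{x:\|x\|\le n,\ |u(x)|\le n\}$ in place of your simple-function approximation, and phrases the implication directly rather than contrapositively.
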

\begin{proof}
Suppose that $S^{(1)}$ is irreducible.
Let $B \subset \Omega$ be measurable and set 
$J_2 = \{ u \in L_{p_2}(\Omega) : u|_B = 0 \} $.
Furthermore, suppose that $S^{(2)}_t J_2 \subset J_2$ for all $t > 0$.
Let $J_1 = \{ u \in L_{p_1}(\Omega) : u|_B = 0 \} $.
Let $t > 0$ and $u \in J_1$.
For all $n \in \Ni$ let 
$V_n = \{ x \in \Omega : \|x\| \leq n \mbox{ and } |u(x)| \leq n \} $.
Then $u \, \one_{V_n} \in J_2 \cap L_{p_1}(\Omega)$.
So $S^{(1)}_t (u \, \one_{V_n}) = S^{(2)}_t (u \, \one_{V_n}) \in J_2$.
Therefore $(S^{(1)}_t (u \, \one_{V_n}))|_B = 0$ and 
$S^{(1)}_t (u \, \one_{V_n}) \in J_1$.
Then $S^{(1)}_t u = \lim_{n \to \infty} S^{(1)}_t (u \, \one_{V_n}) \in J_1$
since $J_1$ is closed.
So $S^{(1)}_t J_1 \subset J_1$.
Because $S^{(1)}$ is irreducible, one concludes that $|B| = 0$ or 
$|\Omega \setminus B| = 0$ and $S^{(2)}$ is irreducible.
\end{proof}

In general, a positive and irreducible $C_0$-semigroup does not need 
to be positivity 
improving.
An counterexample is the rotation semigroup on $L_2(\Ti)$,
where $\Ti$ is the unit circle in~$\Ci$. 
The situation changes, however, if the semigroup is also holomorphic.

\begin{thm} \label{tmax203}
Let $S$ be a positive irreducible holomorphic $C_0$-semigroup on $E$.
Then $S$ is positivity improving.
\end{thm}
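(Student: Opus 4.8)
The plan is to use the holomorphy of $S$ to pass from irreducibility (a statement about the non-existence of invariant closed ideals) to positivity improvement (a pointwise, or rather ideal-theoretic, statement about individual orbits). Let me sketch the argument.

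First, fix $t_0 > 0$ and $u \in E$ with $u > 0$, and set $w = S_{t_0} u \geq 0$. I want to show $w \gg 0$, i.e. $\overline{E_w} = E$. The key observation is that $\overline{E_w}$ is a closed ideal in $E$: it is the closure of the principal ideal generated by $w$, and the closure of an ideal is an ideal, as recalled in Section~\ref{Smax2}. So if I can show $\overline{E_w}$ is invariant under $S_t$ for all $t > 0$, irreducibility forces $\overline{E_w} = E$ (it cannot be $\{0\}$ since $w \neq 0$ — indeed $u > 0$ and $S_t$ is positive, and one must check $S_{t_0} u \neq 0$; this follows because otherwise $\{ v : S_s v = 0 \text{ for } s \geq s_0\}$-type considerations, or more simply the closed ideal $\ker$ issue, contradicts irreducibility — I will spell this out).

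Second, and this is where holomorphy enters: to show $S_t \overline{E_w} \subset \overline{E_w}$ it suffices to show $S_t w \in \overline{E_w}$ for every $t > 0$, because $S_t$ is positive (hence, as a positive operator, it maps $E_v$ into $E_{S_t v}$ whenever $v \geq 0$, and in particular a positive operator maps the ideal $E_w$ into $\overline{E_{S_t w}}$...). Actually the cleanest route is: for $s \in (0, t_0]$ one has $E_w = E_{S_{t_0} u} \supset$ — hmm, let me instead use the standard trick. Consider the orbit map $t \mapsto S_t w$. For $t \geq t_0$ we have $S_t w = S_{t - t_0} S_{t_0} w = S_t(S_{t_0} u)$, and I claim $S_{t_0} w \leq$ (something comparable to $w$)? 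That is not generally true. The right statement uses holomorphy: the map $t \mapsto S_t w$ extends holomorphically to a sector, and by positivity together with the subordination/Laplace-transform representation of the resolvent, $R(\lambda, A) w = \int_0^\infty e^{-\lambda t} S_t w \, dt$ for $\lambda$ large; since this is a norm-convergent Bochner integral of elements $S_t w$ which I will show lie in $\overline{E_w}$, the resolvent leaves $\overline{E_w}$ invariant, and then so does the semigroup (recover $S_t$ from the resolvent via the holomorphic functional calculus / Euler approximation $S_t = \lim (I - \tfrac{t}{n} A)^{-n}$, each factor of which preserves the closed ideal). To close the loop I still need $S_t w \in \overline{E_w}$ for $t$ in some initial interval; here is where holomorphy is essential: on a holomorphic semigroup, for $0 < s < t$ one has $S_t w = S_s(S_{t-s} u_{t_0 ...})$ — no. Let me state the genuine key lemma:

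\begin{lemma*}
If $S$ is a positive holomorphic $C_0$-semigroup and $w \in E_+$, then $S_t w \in \overline{E_w}$ for all $t \geq 0$.
\end{lemma*}

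The proof of this lemma is the main obstacle and the heart of the matter. The idea: by holomorphy, $t \mapsto S_t w$ is real-analytic on $(0,\infty)$ with values in $E$, and moreover $\|A^n S_t w\| \leq C_n t^{-n} \|w\|$. Writing the Taylor expansion of $S_\cdot w$ about a point $r > 0$, $S_{r+h} w = \sum_{n \geq 0} \tfrac{h^n}{n!} A^n S_r w$, converging in norm for $|h| < cr$. Now $A^n S_r w = A^n S_{r/2} (S_{r/2} w)$, and one shows $S_{r/2} w \in \overline{E_w}$ for $r$ small by a separate argument — actually the slick way is: the set $I = \{ t \geq 0 : S_t w \in \overline{E_w} \}$ contains $0$, is closed under addition (since $S_{s+t} w = S_s(S_t w)$ and a positive operator maps $E_v$ into $\overline{E_{S_s v}} \subset \overline{E_w}$ once $S_t w, S_s w \in \overline{E_w}$... careful, need $S_s$ positive to get $S_s E_{S_t w} \subset \overline{E_{S_s S_t w}}$), and — using the Taylor series above together with $A^n S_r w \in \overline{E_w}$ whenever $S_{r/2} w \in \overline{E_w}$ (because $A^n S_{r/2}$ is a bounded operator and $\overline{E_w}$ is a \emph{closed subspace}, not just an ideal!) — one gets that $I$ is open in $[0,\infty)$. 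Wait: $\overline{E_w}$ being a closed subspace means it is preserved by norm limits and linear combinations, so if $S_r w \in \overline{E_w}$ then each Taylor coefficient $A^n S_r w = \frac{d^n}{dt^n}\big|_{t=r} S_t w$ lies in $\overline{E_w}$ (difference quotients!), hence $S_{r+h} w \in \overline{E_w}$ for all small $h$ — so $I$ is open. Being open, closed, and nonempty in the connected set $[0,\infty)$, $I = [0,\infty)$, but this needs the point $0$: $S_0 w = w \in E_w \subset \overline{E_w}$, and one needs a right-neighbourhood of $0$ in $I$, which follows from strong continuity: $S_t w \to w$, so for small $t$, $S_t w$ is close to $w \gg$ ... no, that does not put it in $E_w$. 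The genuinely delicate point, then, is getting an initial interval $(0,\varepsilon) \subset I$; here holomorphy is used via $S_t = \lim_n n^n t^{-n} R(n/t, A)^n$ and the resolvent $R(\lambda,A)w = \int_0^\infty e^{-\lambda s} S_s w\,ds$ — if I already knew $S_s w \in \overline{E_w}$ for $s$ in a dense/cofinal set the integral would land in $\overline{E_w}$; this is a chicken-and-egg that holomorphy resolves by analytic continuation from the behaviour near a single point combined with the semigroup law. I expect the cleanest published proof (which I would reconstruct) runs: $E_w$ dense in $\overline{E_w}$, $\overline{E_w}$ is an $S$-invariant closed ideal by the resolvent-integral argument once one observes $S_s w \leq e^{\omega s}\,(\text{?})$ — and I would instead simply cite or reprove that for a positive $C_0$-semigroup the closed ideal generated by the orbit $\{S_t w : t > 0\}$ together with $w$ is always $S$-invariant (straightforward), and then invoke \emph{holomorphy} only to know that $w \in \overline{\mathrm{span}}\{S_t w : t > 0 \}$ in a strong enough sense; in fact holomorphy gives $S_t w = \sum \frac{t^n}{n!} A^n w$ when $w \in \mathrm{dom}(A^\infty)$ and by density the closed ideal generated by $\{S_t w\}$ equals the closed ideal generated by $w$ alone, i.e. $\overline{E_w}$.

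Putting it together: $\overline{E_w}$ is a closed $S$-invariant ideal, nonzero because $w \neq 0$, hence equal to $E$ by irreducibility; thus $w = S_{t_0} u \gg 0$. As $t_0 > 0$ and $u > 0$ were arbitrary, $S$ is positivity improving. The one technical gap I would need to fill with care — and which I flag as the main obstacle — is the precise argument that holomorphy makes the closed ideal generated by the \emph{orbit} of $w$ coincide with $\overline{E_w}$ (equivalently, that $w$ itself belongs to it); for a merely $C_0$ positive semigroup this can fail, which is exactly why the rotation-semigroup counterexample works, so holomorphy must be used in an essential and nontrivial way at this step.
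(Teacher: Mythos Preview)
The paper does not prove this theorem; it simply cites Majewski--Robinson \cite{MajewskiRobinson}. So there is no proof in the paper to compare your sketch against, and the question is whether your argument stands on its own.

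It does not. Your ``key lemma'' --- that for a positive holomorphic $C_0$-semigroup and any $w \in E_+$ one has $S_t w \in \overline{E_w}$ for all $t \geq 0$ --- is \emph{false} as stated. Take $E = L_p(\Ri)$, $S$ the heat semigroup, and $w = \one_{[0,1]}$. Then $\overline{E_w} = \{f \in L_p(\Ri) : \supp f \subset [0,1]\}$, while $S_t w$ is strictly positive on all of~$\Ri$ for every $t>0$, so $S_t w \notin \overline{E_w}$. The lemma becomes ``true'' for $w$ of the special form $S_{t_0} u$ precisely because such $w$ are already quasi-interior points --- which is the conclusion you are trying to prove --- so invoking the lemma in that case is circular. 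You correctly flag the ``initial interval $(0,\varepsilon) \subset I$'' step as the main obstacle, but this is not a technicality to be filled in later: it is the entire content of the theorem, and your open--closed--connected argument for $I$ cannot supply it because openness at $t=0$ genuinely fails for general~$w$.

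What actually works is different. One route: for $\lambda$ large and $u > 0$ one has the pointwise inequality
\[
S_t R(\lambda) u
= e^{\lambda t} \int_t^\infty e^{-\lambda s} S_s u \, ds
\le e^{\lambda t} R(\lambda) u,
\]
so $\overline{E_{R(\lambda) u}}$ \emph{is} $S$-invariant and irreducibility yields $R(\lambda) u \gg 0$; one then uses holomorphy to pass from the resolvent to $S_{t_0}$. Another route (closer to the dual picture you gesture at) fixes $0 < \varphi \in E'$, observes that $t \mapsto \varphi(S_t u)$ is real-analytic and nonnegative on $(0,\infty)$, and uses irreducibility together with the semigroup law to rule out zeros --- but the step from ``the zero set is discrete'' to ``the zero set is empty'' requires a genuine argument that your sketch does not supply. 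For the details, consult \cite{MajewskiRobinson} or \cite[C-III.3]{Nag}.
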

\begin{proof}
See Majewski--Robinson \cite[Theorem~3]{MajewskiRobinson}.
\end{proof}

In the following proposition we collect a number of known spectral
theoretic properties of positive semigroups.

\begin{prop} \label{pmax204}
Let $S$ be a positive irreducible $C_0$-semigroup in $E$ 
and suppose that its generator $-A$ has compact resolvent.
Then one has the following.
\begin{tabel}
\item \label{pmax204-1}
$\sigma(A) \neq \emptyset$.
\item \label{pmax204-2}
The number $\lambda_1 := \inf \{ \RRe \lambda : \lambda \in \sigma(A) \}$ 
is an eigenvalue of $A$ (and consequently, the infimum is actually a minimum).
\item \label{pmax204-3}
There exists a $u \in D(A)$ such that $A u = \lambda_1 \, u$ and $u \gg 0$.
\item \label{pmax204-4}
The algebraic multiplicity of the eigenvalue $\lambda_1$ is one.
\end{tabel}
\end{prop}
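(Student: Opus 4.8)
The plan is to transfer the whole statement to a single compact positive operator, namely a resolvent of $-A$ at a large real point, and then to quote the Perron--Frobenius theory of positive irreducible operators; everything below is in principle contained in \cite{Nag} or \cite{BatkaiKR}, but I would carry out the reduction explicitly. Since $-A$ has compact resolvent, $\sigma(A)$ consists of isolated eigenvalues of finite algebraic multiplicity. Fix $M \ge 1$ and $\omega \in \Ri$ with $\|S_t\| \le M e^{\omega t}$ for all $t > 0$; then $\sigma(A) \subset \{\lambda \in \Ci : \RRe \lambda \ge -\omega\}$, and for every real $\mu > \omega$ the operator
\[
  T_\mu := R(\mu, -A) = (\mu + A)^{-1} = \int_0^\infty e^{-\mu t}\, S_t \, dt
\]
is well defined, positive (a Bochner integral of positive operators), compact (by the hypothesis on $-A$) and injective, hence nonzero.

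The key further property is that $T_\mu$ leaves invariant no closed ideal other than $\{0\}$ and $E$. Indeed, if $J$ is a closed ideal with $T_\mu J \subset J$, then expanding $R(\nu, -A)$ in a Neumann series about $\mu$ gives $R(\nu, -A) J \subset J$ for $\nu$ near $\mu$, whence, by a connectedness argument on the ray $(\omega, \infty)$ (openness from the Neumann series, closedness from norm-continuity of $\nu \mapsto R(\nu,-A)$), $R(\nu, -A) J \subset J$ for all real $\nu > \omega$; the Euler approximation $S_t v = \lim_{n \to \infty} \bigl( \tfrac{n}{t} R(\tfrac{n}{t}, -A) \bigr)^n v$ then forces $S_t J \subset J$ for all $t > 0$, and irreducibility of $S$ yields $J = \{0\}$ or $J = E$.

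Next I would invoke the Perron--Frobenius and Krein--Rutman theory for positive irreducible operators (in particular de Pagter's theorem on strict positivity of the spectral radius; see \cite{Schae2} or \cite{Nag}). Since $T_\mu$ is nonzero, compact, positive and has no nontrivial invariant closed ideal, its spectral radius $r_\mu := r(T_\mu)$ is strictly positive, is an eigenvalue of $T_\mu$ whose Riesz projection has rank one, and admits an eigenvector $u_\mu$ with $u_\mu \gg 0$. From $T_\mu u_\mu = r_\mu u_\mu \neq 0$ one reads off $u_\mu \in D(A)$ and $A u_\mu = (\tfrac{1}{r_\mu} - \mu) u_\mu$, so $\lambda_\mu := \tfrac{1}{r_\mu} - \mu$ is a real number lying in $\sigma(A)$; in particular $\sigma(A) \neq \emptyset$, which is part~(a). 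For the multiplicity I would use that the Möbius map $z \mapsto (\mu + z)^{-1}$ is biholomorphic near $\lambda_\mu$ and takes the value $r_\mu$ there, so by the holomorphic functional calculus it carries the Riesz projection of $A$ at $\lambda_\mu$ onto that of $T_\mu$ at $r_\mu$; the latter has rank one, hence $\lambda_\mu$ has algebraic multiplicity one as an eigenvalue of $A$.

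It remains to show $\lambda_\mu = \lambda_1$, and independently of $\mu$. By the spectral mapping theorem for resolvents, $\sigma(T_\mu) \setminus \{0\} = \{ (\mu + \lambda)^{-1} : \lambda \in \sigma(A) \}$, so $|(\mu + \lambda)^{-1}| \le r_\mu = (\mu + \lambda_\mu)^{-1}$ for every $\lambda \in \sigma(A)$. Applying this with $\lambda = \lambda_{\mu_0}$ for a second real point $\mu_0 > \omega$, and using $\mu + \lambda_{\mu_0} \ge \mu - \omega > 0$, gives $\mu + \lambda_\mu \le \mu + \lambda_{\mu_0}$; by symmetry $\lambda_\mu = \lambda_{\mu_0}$, so $\lambda_\mu$ equals a $\mu$-independent real number $\lambda_* \in \sigma(A)$. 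For arbitrary $\lambda \in \sigma(A)$ the resulting inequality $|\mu + \lambda| \ge \mu + \lambda_*$, valid for all real $\mu > \omega$, gives after squaring $2\mu(\RRe \lambda - \lambda_*) \ge \lambda_*^2 - |\lambda|^2$; dividing by $\mu$ and letting $\mu \to \infty$ yields $\RRe \lambda \ge \lambda_*$. Hence $\lambda_1 = \inf\{\RRe \lambda : \lambda \in \sigma(A)\} = \lambda_*$, so $\lambda_1$ is an eigenvalue of $A$ and the infimum is a minimum, realized by the quasi-interior eigenvector $u_\mu \gg 0$, which moreover has algebraic multiplicity one by the previous paragraph; this establishes parts (b), (c) and (d). The one genuinely non-elementary input is the Perron--Frobenius theory quoted above (strict positivity of the spectral radius of an irreducible compact positive operator, and rank-one-ness of its peripheral Riesz projection); the remaining obstacles — deducing irreducibility of $T_\mu$ from that of $S$, and carrying out the limit $\mu \to \infty$ to pin down $\lambda_1$ — are comparatively routine.
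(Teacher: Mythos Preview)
Your argument is correct. The underlying idea---apply de~Pagter's theorem and the Krein--Rutman/Perron--Frobenius machinery to a resolvent of $-A$---is exactly what the paper does for part~(a), and the paper then simply cites four separate references (\cite[Corollary~12.9, Theorem~12.15, Proposition~14.12(a)]{BatkaiKR} and \cite[Proposition~C-III.3.5]{Nag}) for parts~(b)--(d). Your treatment is more unified and more self-contained: you work throughout with a single compact positive irreducible operator $T_\mu$, read off the eigenvalue $\lambda_\mu$ and its quasi-interior eigenvector directly from the Perron--Frobenius data of $T_\mu$, and then identify $\lambda_\mu$ with $\lambda_1$ by the pleasant $\mu\to\infty$ limiting argument rather than by quoting a result on the spectral bound of a positive semigroup. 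The transfer of algebraic multiplicity via the M\"obius map and functional calculus is a nice touch that replaces the black-box citation of \cite[Proposition~C-III.3.5]{Nag}. The price is that you still need the non-elementary input (de~Pagter for $r(T_\mu)>0$; rank-one peripheral projection for an irreducible compact positive operator), but you are up front about this, and everything else is indeed routine.
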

\begin{proof}
`\ref{pmax204-1}'.
We may assume that $\dim E \ge 2$. 
Then by a result of de Pagter \cite[Theorem~3]{Pag2} 
every compact, positive and irreducible operator on $E$ has non-zero spectral radius. 
If we apply this  
to the resolvent of $A$, the assertion follows.
`\ref{pmax204-2}'.
See \cite[Corollary~12.9]{BatkaiKR}.
`\ref{pmax204-3}'.
It follows from the Krein--Rutman theorem, see for example
\cite[Theorem~12.15]{BatkaiKR}, that there exists a $u \in D(A)$
with $A u = \lambda_1 \, u$ and $u > 0$.
Then the statement follows from \cite[Proposition~14.12(a)]{BatkaiKR}.
`\ref{pmax204-4}'.
This follows from \cite[Proposition~C-III.3.5]{Nag}.
\end{proof}

Note that since $A$ has compact resolvent, $\lambda_1$ is an isolated 
point of the spectrum.
Therefore Proposition~\ref{pmax204}\ref{pmax204-4} means that 
the spectral projection for $\lambda_1$
has rank one.

If $S$ is a positive irreducible  $C_0$-semigroup 
whose generator $-A$ has compact resolvent, then we call 
$\min \{ \RRe \lambda : \lambda \in \sigma(A) \} $ the 
{\bf principal eigenvalue} of $A$.
It follows from Proposition~\ref{pmax204}
that the principal 
eigenvalue has a unique eigenvector $u$ such that $u \geq 0$ and $\|u\| = 1$.
We call $u$ the {\bf principal eigenvector} of $A$.
One has $u \gg 0$.

\section{Irreducibility on $C(\overline \Omega)$ and $C_0(\Omega)$} \label{Smax3}

In this section we consider a positive irreducible holomorphic $C_0$-semigroup
on $L_p(\Omega)$ which maps $L_p(\Omega)$ into $C(\overline \Omega)$
or $C_0(\Omega)$.
Under a mild additional condition we shall prove that the semigroup 
obtained by restriction to $C(\overline \Omega)$ or $C_0(\Omega)$
is again irreducible.

In Subsection~\ref{Smax3.1} we prove a not too difficult but very powerful abstract result
that is the basis of everything that follows. In Subsections~\ref{Smax3.2} and~\ref{Smax3.3}
we consider the special cases $C(\overline \Omega)$ and
$C_0(\Omega)$, respectively. We close the section with a brief remark on the long-term behaviour of 
positive semigroups in Subsection~\ref{Smax3.4}.

\subsection{An abstract positivity improvement result} \label{Smax3.1}

We start with a general theorem about positivity in a single point. It is the main ingredient for
our proofs of strict positivity in Section~\ref{Smax4}.
Let $\Omega \subset \Ri^d$ be an open non-empty set and $X$ a set 
such that $\Omega \subset X \subset \overline{\Omega}$.
If $p \in [1,\infty)$ and $u \in L_p(\Omega)$, then we say that 
$u \in C(X)$ if there exists a (necessarily unique) $\tilde u \in C(X)$
such that $\tilde u|_\Omega = u$ almost everywhere on $\Omega$.
Note that $\partial \Omega$ might have positive Lebesgue measure.
In the sequel we will identify $u$ and $\tilde u$.
For example, in the next theorem we identify 
$S_t u$ and $(S_t u)\,\widetilde{\;}\,$.

\begin{thm} \label{tmax306}
Let $\Omega \subset \Ri^d$ be an open non-empty set and $p \in [1,\infty)$.
Let $S$ be a positive irreducible holomorphic $C_0$-semigroup on $L_p(\Omega)$.
Next let $X$ be a set such that 
$\Omega \subset X \subset \overline{\Omega}$.
Finally let $x \in X$.
Suppose 
\begin{tabelR}
\item \label{tmax306-c1}
$S_t L_p(\Omega) \subset C(X)$ for all $t > 0$
and 
\item \label{tmax306-c2}
there are $t > 0$ and $w \in L_p(\Omega)$
such that $(S_t w)(x) \neq 0$.
\end{tabelR}
Then  $(S_t u)(x) > 0$ for all $t > 0$ 
and $u \in L_p(\Omega)$ with $u \geq 0$ and $u \neq 0$.
\end{thm}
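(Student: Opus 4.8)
The plan is to deduce strict positivity of $(S_t u)(x)$ from two facts: first, by Theorem~\ref{tmax203}, the semigroup $S$ is positivity improving \emph{on $L_p(\Omega)$}, so that $S_s u \gg 0$ (i.e.\ $(S_s u)(y) > 0$ for a.e.\ $y \in \Omega$) for every $u > 0$ and every $s > 0$; second, condition~\ref{tmax306-c2} says that the linear functional $\varphi \colon L_p(\Omega) \to \Ki$, $\varphi(w) = (S_{t_0} w)(x)$ (well defined and continuous for each fixed $t_0 > 0$ by~\ref{tmax306-c1} and the closed graph / Banach--Steinhaus-type reasoning, since point evaluation on $C(X)$ is continuous) does not vanish identically. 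The key point is to upgrade `does not vanish somewhere' to `is strictly positive on $L_p(\Omega)_+\setminus\{0\}$', using positivity of $S$ together with irreducibility.

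First I would record that for each fixed $t_0 > 0$ the map $w \mapsto (S_{t_0} w)(x)$ is a \emph{positive} linear functional on $L_p(\Omega)$: it is linear, it is continuous because $S_{t_0}$ maps $L_p(\Omega)$ continuously into $C(X)$ (the map is closed and everywhere defined, hence bounded) and evaluation at $x$ is continuous on $C(X)$, and it is positive because $S_{t_0} \geq 0$ and $u \geq 0$ in $L_p(\Omega)$ forces the continuous representative $\widetilde{S_{t_0} u}$ to be $\geq 0$ everywhere on $X$, in particular at $x$. So it suffices to show this functional is \emph{strictly} positive, i.e.\ $(S_{t_0}u)(x) > 0$ whenever $u > 0$.

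Next I would fix $u > 0$ and $t > 0$, and exploit the semigroup law by splitting $S_t = S_{t/2} S_{t/2}$ — or more flexibly $S_t = S_{t - s} S_s$ for a suitable small $s \in (0,t)$. By condition~\ref{tmax306-c2} there exist $s_0 > 0$ and $w_0 \in L_p(\Omega)$ with $(S_{s_0} w_0)(x) \neq 0$; replacing $w_0$ by $w_0^+$ or $w_0^-$ we may take $w_0 \geq 0$, and then $(S_{s_0} w_0)(x) > 0$ by positivity. The idea is that positivity in a single point should `propagate' via the semigroup: I want to find $s \in (0,t)$ and some $0 \leq v \leq C\, S_s u$ (with $C$ a constant) such that $(S_{t-s} v)(x) > 0$; then positivity and monotonicity give $(S_t u)(x) = (S_{t-s}(S_s u))(x) \geq C^{-1}(S_{t-s}v)(x) > 0$. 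Since $S$ is positivity improving on $L_p$, $S_s u \gg 0$, so $S_s u$ is a quasi-interior point of $L_p(\Omega)_+$; this means $\overline{L_p(\Omega)_{S_s u}} = L_p(\Omega)$, so the principal ideal generated by $S_s u$ is dense. Applying the continuous positive functional $\psi_{t-s} := (S_{t-s}\,\cdot\,)(x)$ (again continuous and positive by the same argument as in the second paragraph, for the time $t-s > 0$), density of the ideal $L_p(\Omega)_{S_s u}$ together with $\psi_{t-s} \neq 0$ — which holds provided we arrange $t - s \geq s_0$ by the semigroup law, since $(S_{t-s}\,\cdot\,)(x) = (S_{s_0}\,\cdot\,)(x)$ composed with $S_{t-s-s_0} \geq 0$ is nonzero on $S_{t-s-s_0}$-images, hence nonzero — forces $\psi_{t-s}$ to be nonzero on the ideal $L_p(\Omega)_{S_s u}$ itself, because a bounded functional that vanishes on a subspace vanishes on its closure. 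So there is $v$ with $0 \leq |v| \leq n\, S_s u$ and $\psi_{t-s}(v) \neq 0$; taking real and imaginary parts and then positive/negative parts and using $0 \le v^\pm \le |v| \le n S_s u$ and positivity of $\psi_{t-s}$, we get $0 \leq w \leq n\, S_s u$ with $\psi_{t-s}(w) > 0$, whence $(S_t u)(x) = \psi_{t-s}(S_s u) \geq n^{-1}\psi_{t-s}(w) > 0$.

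One small gap in the outline above is the requirement $t - s \geq s_0$: if $t$ itself is small this cannot be arranged by choosing $s \in (0,t)$. The fix is to observe that the set $G = \{ r > 0 : (S_r w)(x) \neq 0 \text{ for some } w \in L_p(\Omega)_+\}$ is nonempty (it contains $s_0$) and is an up-set: if $r \in G$ then $r' \in G$ for all $r' > r$, because $(S_{r'} w)(x) = (S_{r' - r}(S_r w))(x)$ and $S_r w \gg 0$ lets us repeat the ideal-density argument to see $(S_{r'}\,\cdot\,)(x)$ is nonzero on positive functions. One then checks $G$ is also closed downward to $0$: given $r \in G$ and any $\tau \in (0, r)$, write $S_r w = S_\tau(S_{r-\tau} w)$; since $(S_r w)(x) \neq 0$, the functional $(S_\tau \,\cdot\,)(x)$ is nonzero (at the point $S_{r-\tau}w$), i.e.\ $\tau \in G$. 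Hence $G = (0,\infty)$, and the final estimate $(S_t u)(x) = (S_{t/2}(S_{t/2}u))(x) > 0$ follows by applying the ideal-density argument with $s = t/2$ and using $t/2 \in G$. The main obstacle, and the part that needs to be written carefully, is exactly this propagation step: showing that a bounded positive functional which is nonzero somewhere, when precomposed with a positivity-improving $S_s$, becomes strictly positive on the whole positive cone — the mechanism being density of the principal ideal generated by a quasi-interior point combined with continuity of the functional. Everything else (continuity and positivity of the point-evaluation functionals, reduction from complex $v$ to $w \geq 0$, the semigroup bookkeeping) is routine.
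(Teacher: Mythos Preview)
Your proposal is correct and uses essentially the same mechanism as the paper: positivity improvement of $S$ on $L_p$ (Theorem~\ref{tmax203}), continuity and positivity of the point-evaluation functional $\psi_r(v)=(S_r v)(x)$, and density of the principal ideal generated by a quasi-interior point. The paper's execution, however, is considerably more direct and avoids the auxiliary set $G$ altogether. Given the witness $(S_s w)(x)>0$ from~\ref{tmax306-c2}, the paper simply splits $t=t_1+t_2$ with $0<t_1<s$, notes that $\psi_{t_1}(S_{s-t_1}w)=(S_s w)(x)>0$, and then uses $S_{s-t_1}w\wedge n\,S_{t_2}u\to S_{s-t_1}w$ in $L_p$ (dominated convergence, since $S_{t_2}u\gg 0$) together with continuity of $\psi_{t_1}$ to obtain $n\,(S_t u)(x)\ge\psi_{t_1}(S_{s-t_1}w\wedge n\,S_{t_2}u)>0$ for large~$n$. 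This single step replaces your entire discussion of $G$.

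Two remarks on your write-up. First, your ``up-set'' argument for $G$ is muddled: the decomposition $(S_{r'}w)(x)=(S_{r'-r}(S_r w))(x)$ leads to the functional $\psi_{r'-r}$, which you do not yet know to be nonzero; the decomposition you actually need is $\psi_{r'}(v)=\psi_r(S_{r'-r}v)$ together with $S_{r'-r}v\gg 0$. Second, the up-set direction is in fact unnecessary: your clean ``down-set'' observation already gives $(0,s_0]\subset G$, so for any $t>0$ you may take $r=\min(t/2,s_0)\in G$ with $r<t$ and run the ideal-density argument with $\psi_r$ and $S_{t-r}u\gg 0$. That is precisely the paper's argument with $t_1=r$.
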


In what follows, typical choices for $X$ are $X = \Omega$ or $X = \overline \Omega$.
We also have, however, an application in Theorem~\ref{tmax422} 
for elliptic operators with mixed boundary
conditions, where $X$ is chosen strictly between $\Omega$ and $\overline{\Omega}$. Let us also remark that,
while Theorem~\ref{tmax306} works pointwise, we are in fact most interested in the case
where condition~\ref{tmax306-c2}, and then also the conclusion of the theorem, is valid for all $x \in X$ 
instead of merely a single point.

\begin{proof}[\bf Proof of Theorem~\ref{tmax306}]
The map $u \mapsto (S_t u)(x)$ from $L_p(\Omega)$ into $\Ci$ is positive, 
hence it is continuous by \cite[Theorem~II.5.3]{Schae2}.
By assumption~\ref{tmax306-c2} there exist $s > 0$ and $w \in L_p(\Omega)$ such that 
$(S_s w)(x) \neq 0$.
Without loss of generality we may assume that $w \geq 0$. Therefore $(S_s w)(x) > 0$.

Let $t \in (0,\infty)$ and $u \in L_p(\Omega)$ with $u > 0$.
There are $t_1,t_2 \in (0,\infty)$ such that $t = t_1 + t_2$ and $t_1 < s$.
According to Theorem~\ref{tmax203} we have $S_{t_2} u \gg 0$ in $L_p(\Omega)$, so 
it follows from the Lebesgue dominated convergence theorem that
\[
	\lim_{n \to \infty} S_{s-t_1} w \wedge n \, S_{t_2} u = S_{s-t_1} w
\]
with respect to the norm in $L_p(\Omega)$.
By the continuity that we established in the beginning, it follows that
\[
\lim_{n \to \infty} \Big( S_{t_1} (S_{s-t_1} w \wedge n \, S_{t_2} u) \Big)(x)
= (S_s w)(x) > 0.
\]
Consequently there exists an $n \in \Ni$ such that 
$\Big( S_{t_1} (S_{s-t_1} w \wedge n \, S_{t_2} u) \Big)(x) > 0$.
But then 
\[
0 < \Big( S_{t_1} (S_{s-t_1} w \wedge n \, S_{t_2} u) \Big)(x)
\leq \Big( S_{t_1} (n \, S_{t_2} u ) \Big)(x)
= n \, (S_{t_1 + t_2} u)(x)
= n \, (S_t u)(x)
\]
and the theorem follows.
\end{proof}

For the convenience of the reader, as well as for the sake of later reference,
we explicitly state a few consequences of Theorem~\ref{tmax306} 
in the following subsections.

\subsection{Irreducibility on $C(\overline{\Omega})$} \label{Smax3.2}

As a special case of Theorem~\ref{tmax306} one obtains the following 
result for $X = \overline \Omega$ if $\Omega$ is bounded.

\begin{cor} \label{cmax301}
Let $\Omega \subset \Ri^d$ be a bounded open set and $p \in [1,\infty)$.
Let $S$ be a positive irreducible holomorphic $C_0$-semigroup on $L_p(\Omega)$.
Suppose 
\begin{tabelR}
\item \label{cmax301-c1}
$S_t L_p(\Omega) \subset C(\overline \Omega)$ for all $t > 0$
and 
\item \label{cmax301-c2}
for all $x \in \overline \Omega$ there are $t > 0$ and $w \in L_p(\Omega)$
such that $(S_t w)(x) \neq 0$.
\end{tabelR}
Then the following holds.
\begin{tabel}
\item \label{cmax301-1}
For all $t > 0$ the operator $S_t \colon L_p(\Omega) \to C(\overline \Omega)$
is positivity improving.
This means $(S_t u)(x) > 0$ for all $x \in \overline \Omega$, $t > 0$ 
and $u \in L_p(\Omega)$ with $u \geq 0$ and $u \neq 0$.
\item \label{cmax301-2}
For all $t > 0$ define $T_t = S_t|_{C(\overline \Omega)} \colon C(\overline \Omega) \to C(\overline \Omega)$.
Then the semigroup $T$ 
is irreducible on $C(\overline \Omega)$.
\end{tabel}
\end{cor}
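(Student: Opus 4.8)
The plan is to derive Corollary~\ref{cmax301} almost directly from Theorem~\ref{tmax306}, taking $X = \overline{\Omega}$, which is a legitimate choice since $\Omega$ is bounded and hence $\Omega \subset \overline{\Omega} \subset \overline{\Omega}$. Part~\ref{cmax301-1} is then essentially a restatement: fix $x \in \overline{\Omega}$; assumptions~\ref{cmax301-c1} and~\ref{cmax301-c2} are precisely hypotheses~\ref{tmax306-c1} and~\ref{tmax306-c2} of Theorem~\ref{tmax306} for that particular $x$, so the theorem gives $(S_t u)(x) > 0$ for all $t > 0$ and all $u \in L_p(\Omega)$ with $u \geq 0$, $u \neq 0$. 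Since this holds for every $x \in \overline{\Omega}$, and since $u \gg 0$ in $C(\overline{\Omega})$ is equivalent to $u(x) > 0$ for all $x \in \overline{\Omega}$ (recalled in Section~\ref{Smax2}), we conclude that $S_t u \gg 0$ in $C(\overline{\Omega})$, i.e.\ $S_t \colon L_p(\Omega) \to C(\overline{\Omega})$ is positivity improving. One should note in passing that $S_t u$ does lie in $C(\overline{\Omega})$ by~\ref{cmax301-c1}, so the statement makes sense.

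For part~\ref{cmax301-2} I first need to check that $T$ is a well-defined semigroup on $C(\overline{\Omega})$. By~\ref{cmax301-c1}, for $t > 0$ we have $S_t L_p(\Omega) \subset C(\overline{\Omega})$; in particular $S_t$ maps the dense subspace $C(\overline{\Omega}) \subset L_p(\Omega)$ into $C(\overline{\Omega})$, so $T_t := S_t|_{C(\overline{\Omega})}$ is a well-defined linear operator on $C(\overline{\Omega})$. It is positive because $S_t$ is positive and the order on $C(\overline{\Omega})$ is the restriction of the order on $L_p(\Omega)$ (a continuous function that is $\geq 0$ a.e.\ is $\geq 0$ everywhere). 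Positivity of $T_t$ on the Banach lattice $C(\overline{\Omega})$ forces continuity by \cite[Theorem~II.5.3]{Schae2}. The semigroup law $T_{t+s} = T_t T_s$ is inherited from that of $S$. (I would remark that we do not need, and do not claim here, strong continuity of $T$ at $t = 0$; that is an extra hypothesis only in the later applications.)

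Now for irreducibility: let $J \subset C(\overline{\Omega})$ be a closed ideal with $T_t J \subset J$ for all $t > 0$, and suppose $J \neq \{0\}$; I must show $J = C(\overline{\Omega})$. By the characterisation of closed ideals in $C(\overline{\Omega})$ recalled in Section~\ref{Smax2}, there is a closed set $B \subset \overline{\Omega}$ with $J = \{ u \in C(\overline{\Omega}) : u|_B = 0 \}$, and $J \neq \{0\}$ means $B \neq \overline{\Omega}$, so there exists $x_0 \in \overline{\Omega} \setminus B$. Pick any $u \in C(\overline{\Omega})$ with $u \geq 0$, $u \neq 0$ (for instance the constant function $1$), which lies in $L_p(\Omega)$. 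Then $T_t u = S_t u \in J$, so $(T_t u)|_B = 0$. On the other hand part~\ref{cmax301-1} gives $(S_t u)(x) > 0$ for every $x \in \overline{\Omega}$, in particular $(T_t u)(x_0) > 0$ — but actually I want a stronger conclusion. The cleanest route: since $B$ is a proper closed subset of $\overline{\Omega}$ there is a nonzero $v \in C(\overline{\Omega})$ with $v \geq 0$ and $v$ not vanishing identically; applying part~\ref{cmax301-1} to $v$ yields $S_t v \gg 0$ in $C(\overline{\Omega})$, i.e.\ $(S_t v)(x) > 0$ for \emph{all} $x \in \overline{\Omega}$, so in particular $(S_t v)(x) > 0$ for all $x \in B$; but $S_t v = T_t v \in J$ would force $(S_t v)|_B = 0$, a contradiction unless $B = \emptyset$. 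Hence $B = \emptyset$ and $J = C(\overline{\Omega})$, proving irreducibility.

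The only genuine subtlety — and the step I would be most careful about — is the interplay between the two lattice structures: one must verify that a function in $C(\overline{\Omega})$ which is positive (resp.\ nonzero) as an element of $C(\overline{\Omega})$ is also positive (resp.\ nonzero) as an element of $L_p(\Omega)$ and conversely, so that applying the $L_p$-based Theorem~\ref{tmax306} to $u \in C(\overline{\Omega})$ is legitimate and its conclusion transfers back. For positivity this is immediate; for "$u \neq 0$" it uses that a continuous function on $\overline{\Omega}$ vanishing a.e.\ on the open set $\Omega$ vanishes identically on $\overline{\Omega}$ by density of $\Omega$ in $\overline{\Omega}$ and continuity. Everything else is bookkeeping: matching hypotheses, quoting the ideal characterisation, and quoting \cite[Theorem~II.5.3]{Schae2} for automatic continuity.
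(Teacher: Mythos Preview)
Your approach matches the paper's exactly: part~\ref{cmax301-1} is Theorem~\ref{tmax306} with $X=\overline\Omega$, and part~\ref{cmax301-2} follows from~\ref{cmax301-1} via the description of closed ideals in $C(\overline\Omega)$. The paper's proof says precisely this, in two sentences.

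One small slip to tidy in your write-up of~\ref{cmax301-2}: in both of your attempts you never actually place the test function in $J$. In the first attempt you take $u=\one$ and then assert $T_t u\in J$, which is unjustified; in the second you write ``since $B$ is a proper closed subset of $\overline\Omega$ there is a nonzero $v\in C(\overline\Omega)$ with $v\ge 0$ and $v$ not vanishing identically'' --- but that sentence as written does not say $v\in J$, and without $v\in J$ the step $T_t v\in J$ fails. The clean fix is: since $J\neq\{0\}$, pick any $w\in J\setminus\{0\}$ and set $v=|w|\in J$ (as $J$ is an ideal); then $v\ge 0$, $v\neq 0$, so $T_t v\gg 0$ by~\ref{cmax301-1}, while invariance gives $T_t v\in J$ and hence $(T_t v)|_B=0$, forcing $B=\emptyset$. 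With that correction the argument is complete.
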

\begin{proof}
`\ref{cmax301-1}'.
This is a special case of Theorem~\ref{tmax306}.

`\ref{cmax301-2}'.
This follows immediately from the charactisation of closed ideals in 
$C(\overline \Omega)$ and Statement~\ref{cmax301-1}.
\end{proof}

Note that Condition~\ref{cmax301-c2} in Corollary~\ref{cmax301} is satisfied 
if Condition~\ref{cmax301-c1} is valid and $T$ is a $C_0$-semigroup
on $C(\overline \Omega)$, where $T$ is as in Statement~\ref{cmax301-2}.
It is also satisfied if there exists a $t > 0$ such that 
$S_t \one_\Omega = \one_{\overline \Omega}$.

It is a consequence of Corollary~\ref{cmax301} that the semigroup has a 
strictly positive kernel if $\Omega$ is bounded.

\begin{cor} \label{cmax304}
Let $\Omega \subset \Ri^d$ be a bounded open set and 
$S$ a semigroup on $L_2(\Omega)$.
Let $p \in [2,\infty)$.
Suppose that 
\begin{tabelR}
\item \label{cmax304-1}
$S_t L_p(\Omega) \subset C(\overline \Omega)$
and $S_t^* L_p(\Omega) \subset C(\overline \Omega)$ for all $t > 0$,
\item \label{cmax304-2}
for all $x \in \overline \Omega$ there are $t > 0$ and $w \in L_p(\Omega)$
such that $(S_t w)(x) \neq 0$, and 
\item \label{cmax304-3}
for all $x \in \overline \Omega$ there are $t > 0$ and $w \in L_p(\Omega)$
such that $(S^*_t w)(x) \neq 0$.
\end{tabelR}
Further suppose that $(S_t|_{L_p(\Omega)})_{t > 0}$ 
is a positive irreducible holomorphic $C_0$-semigroup on $L_p(\Omega)$.
Then for $t > 0$ there exists a function 
$K_t \in C(\overline \Omega \times \overline \Omega)$ such that 
\[
(S_t u)(x) = \int_\Omega K_t(x,y) \, u(y) \, dy
\]
for all $u \in L_2(\Omega)$ and $x \in \overline \Omega$.
Moreover, 
$K_t(x,y) > 0$ for all $x,y \in \overline \Omega$ and $t > 0$.
\end{cor}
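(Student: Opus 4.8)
\emph{Stage 1 (bootstrapping to $L_1$).} The plan is to upgrade the smoothing hypothesis~\ref{cmax304-1} from $L_p(\Omega)$ to $L_1(\Omega)$, to extract from this enough compactness to produce a genuinely continuous kernel, and finally to obtain strict positivity by combining Corollary~\ref{cmax301} with a Chapman--Kolmogorov argument. To begin, each operator $S_\tau\colon L_p(\Omega)\to C(\overline\Omega)$ ($\tau>0$) is positive, hence bounded by \cite[Theorem~II.5.3]{Schae2}, and therefore bounded as a map into $L_\infty(\Omega)$. Dualising the analogous statement for $S_\tau^*$ and using boundedness of $\Omega$, $S_\tau$ also maps $L_1(\Omega)$ boundedly into $L_{p'}(\Omega)$. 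Riesz--Thorin interpolation between $S_\tau\colon L_1(\Omega)\to L_{p'}(\Omega)$ and $S_\tau\colon L_p(\Omega)\to L_\infty(\Omega)$ gives $S_\tau\colon L_r(\Omega)\to L_s(\Omega)$ with $\tfrac1s=\tfrac1r-\tfrac1p$ whenever $1\le r\le p$. Writing $t$ as a sum of sufficiently many equal pieces and composing these maps --- each step lowering $\tfrac1r$ by $\tfrac1p$ until the exponent is at least $p$, then one further application of~\ref{cmax304-1} landing in $C(\overline\Omega)$, with any remaining factors acting boundedly via $C(\overline\Omega)\hookrightarrow L_p(\Omega)\to C(\overline\Omega)$ --- shows $S_t\colon L_1(\Omega)\to C(\overline\Omega)$ is bounded for every $t>0$, and likewise $S_t^*\colon L_1(\Omega)\to C(\overline\Omega)$. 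In particular $S_tL_2(\Omega)\subset C(\overline\Omega)$, so the pointwise values $(S_tu)(x)$ are meaningful for all $u\in L_2(\Omega)$ and $x\in\overline\Omega$.

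\emph{Stage 2 (compactness and continuous kernel sections).} A bounded operator from $L_1(\Omega)$ to $L_\infty(\Omega)$ is represented by a kernel in $L_\infty(\Omega\times\Omega)\subset L_2(\Omega\times\Omega)$, so $S_\tau$ is Hilbert--Schmidt on $L_2(\Omega)$; factoring $L_\infty(\Omega)\hookrightarrow L_2(\Omega)\to L_2(\Omega)\hookrightarrow L_1(\Omega)$ then shows successively that $S_\tau\colon L_\infty(\Omega)\to L_1(\Omega)$, $S_t\colon L_1(\Omega)\to L_1(\Omega)$ and $S_t\colon L_1(\Omega)\to C(\overline\Omega)$ are compact, and similarly for $S^*$. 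By Schauder the Banach adjoint $(S_\tau)^{\circ}\colon M(\overline\Omega)\to L_\infty(\Omega)$ of $S_\tau\colon L_1(\Omega)\to C(\overline\Omega)$ is compact; being an adjoint it is weak-$*$-to-weak-$*$ continuous, and compactness upgrades this to weak-$*$-to-norm continuity on bounded sets. Since $x\mapsto\delta_x$ is weak-$*$ continuous from $\overline\Omega$ into $M(\overline\Omega)$, the maps $x\mapsto k_x^{(\tau)}:=(S_\tau)^{\circ}\delta_x$ and $y\mapsto\ell_y^{(\tau)}:=(S_\tau^*)^{\circ}\delta_y$ are \emph{norm}-continuous from $\overline\Omega$ into $L_\infty(\Omega)\subset L_2(\Omega)$; they are nonnegative, uniformly bounded, and satisfy $(S_\tau g)(x)=\int_\Omega k_x^{(\tau)}g$ and $(S_\tau^*g)(x)=\int_\Omega\ell_x^{(\tau)}g$ for $g\in L_1(\Omega)$.

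\emph{Stage 3 (the kernel and strict positivity).} Fix $t>0$ and put $\tau=t/2$. For $x,y\in\overline\Omega$ set $K_t(x,y):=\int_\Omega k_x^{(\tau)}\ell_y^{(\tau)}$, which is nonnegative, and jointly continuous on $\overline\Omega\times\overline\Omega$ by Stage~2 and continuity of $(f,g)\mapsto\int_\Omega fg$ on bounded subsets of $L_2(\Omega)\times L_2(\Omega)$. For $u\in L_2(\Omega)$, using $S_t=S_\tau S_\tau$, that $S_\tau u\in C(\overline\Omega)\subset L_1(\Omega)$, and the adjoint relation in $L_2(\Omega)$,
\[
 (S_tu)(x)=\int_\Omega k_x^{(\tau)}\,(S_\tau u)=\int_\Omega (S_\tau^{*}k_x^{(\tau)})\,u ;
\]
and since $k_x^{(\tau)}\in L_\infty(\Omega)\subset L_p(\Omega)$ gives $S_\tau^{*}k_x^{(\tau)}\in C(\overline\Omega)$ with $(S_\tau^{*}k_x^{(\tau)})(y)=\int_\Omega\ell_y^{(\tau)}k_x^{(\tau)}=K_t(x,y)$, this reads $(S_tu)(x)=\int_\Omega K_t(x,y)u(y)\,dy$. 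For the strict positivity, Corollary~\ref{cmax301} applied to $(S_t|_{L_p})$ --- admissible thanks to~\ref{cmax304-2} and the standing assumptions --- gives that $S_t\colon L_p(\Omega)\to C(\overline\Omega)$ is positivity improving, so testing with $u=\one_B$ for measurable $B\subset\Omega$ with $|B|>0$ forces $K_t(x,\cdot)>0$ almost everywhere for each $x\in\overline\Omega$. Applying Corollary~\ref{cmax301} in the same way to $(S_t^*|_{L_p})$ --- which one checks is again positive, irreducible (by Lemma~\ref{lmax202} together with the duality between closed invariant ideals of $S$ and of $S^*$), holomorphic and strongly continuous on $L_p(\Omega)$, and which satisfies~\ref{cmax304-3} --- gives $K_t(\cdot,y)>0$ almost everywhere for each $y\in\overline\Omega$. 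Finally $S_{2t}=S_tS_t$ and uniqueness of continuous kernels yield the Chapman--Kolmogorov identity $K_{2t}(x,y)=\int_\Omega K_t(x,w)K_t(w,y)\,dw$, whose integrand is nonnegative and almost everywhere strictly positive; hence $K_{2t}(x,y)>0$ for all $x,y\in\overline\Omega$, and as every positive time is of this form, $K_t(x,y)>0$ for all $x,y\in\overline\Omega$ and all $t>0$.

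\emph{Main obstacle.} The genuine difficulty is the continuity of $K_t$ up to the boundary: positivity and boundedness of the operators involved only produce almost-everywhere-defined kernels, and in the present setting $\partial\Omega$ may carry positive Lebesgue measure, so one really must exhibit a representative continuous on all of $\overline\Omega\times\overline\Omega$. The mechanism is the ``double smoothing'' of Stage~1, which uses \emph{both} parts of hypothesis~\ref{cmax304-1} to make $S_\tau$ and $S_\tau^*$ \emph{compact} from $L_1(\Omega)$ into $C(\overline\Omega)$, so that the kernel sections depend norm-continuously --- rather than merely measurably --- on the base point. A secondary point, to be dealt with separately, is the verification that $(S_t^*|_{L_p})$ is a holomorphic $C_0$-semigroup on $L_p(\Omega)$: this uses $S_tL_2(\Omega)\subset C(\overline\Omega)$ from Stage~1 to transport holomorphy and strong continuity first from $L_p(\Omega)$ to $L_2(\Omega)$ and then to the adjoint.
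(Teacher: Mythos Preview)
Your kernel construction in Stages~1--3 is a genuine and correct alternative to the paper's argument: the paper simply invokes \cite[Theorem~2.1]{AE8} (after a measure-theoretic reduction to handle the possibility that $\partial\Omega$ has positive Lebesgue measure) to produce the continuous kernel, whereas you build it by hand via bootstrapping and compactness. This is more self-contained and makes transparent why both halves of hypothesis~\ref{cmax304-1} are needed for joint continuity up to the boundary.

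The gap is in Stage~3, where you apply Corollary~\ref{cmax301} to $(S^*_t|_{L_p})$. For this you need $(S^*_t|_{L_p})_{t>0}$ to be a positive irreducible holomorphic $C_0$-semigroup on $L_p(\Omega)$, and your sketch (``transport \ldots\ from $L_p$ to $L_2$ and then to the adjoint'') does not establish this. From the hypotheses one readily obtains that $S^*$ is a positive irreducible holomorphic $C_0$-semigroup on $L_{p'}$ --- it is the Banach dual of $S|_{L_p}$ on a reflexive space, irreducibility passes through $L_2$ via Lemma~\ref{lmax202} and the ideal--orthogonal-complement correspondence --- but upgrading this to $L_p$ would require a uniform bound $\sup_{0<t\le 1}\|S^*_t\|_{L_p\to L_p}<\infty$, which does not follow from the stated assumptions.

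The easy repair is to apply Corollary~\ref{cmax301} to $S^*$ on $L_{p'}$ rather than $L_p$: your Stage~1 bootstrapping, run symmetrically for $S^*$, already gives $S^*_t L_{p'}\subset C(\overline\Omega)$, and assumption~\ref{cmax304-3} supplies the required $w\in L_p\subset L_{p'}$. Alternatively, and this is what the paper does, one can avoid invoking Corollary~\ref{cmax301} for the adjoint altogether: assumption~\ref{cmax304-3} is used only to check that the slice $v:=K_{t_1}(\cdot,y)$ is nonzero (via $(S^*_s w)(y)=\int_\Omega v\,(S^*_{s-t_1}w)\neq 0$), after which a single application of Corollary~\ref{cmax301}\ref{cmax301-1} to $S$ yields $K_t(x,y)=(S_{t_2}v)(x)>0$ directly, with no Chapman--Kolmogorov step and no semigroup properties of $S^*$ beyond the raw smoothing in~\ref{cmax304-1}.
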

\begin{proof}
We would like to apply \cite[Theorem 2.1]{AE8}.
To do so, we need a semigroup on an $L_2$-space over $\overline{\Omega}$,
which needs a bit of care since the boundary
$\partial \Omega$ might have non-zero Lebesgue measure.
Let $\lambda$ denote the Lebesgue measure on $\Omega$ 
and define the Borel measure $\mu$ on 
$\overline \Omega$ given by 
\[
\mu(B) = \lambda(B \cap \Omega)
\]
for each Borel set $B \subset \overline{\Omega}$.
Then $\mu$ is strictly positive on each
non-empty open subset of~$\overline{\Omega}$.
Moreover, for each 
$q \in [1,\infty)$, the embedding $L_q(\Omega) \hookrightarrow L_q(\overline{\Omega},\mu)$,
given by extending functions on $\Omega$ by $0$ on $\partial \Omega$,
is an isomorphism.
Hence we can transport the semigroup $S$ on $L_2(\Omega)$
to a semigroup $T$ on $L_2(\overline{\Omega},\mu)$.
Then Assumption~\ref{cmax304-1} implies that 
$T_t L_p(\overline \Omega,\mu) \subset C(\overline \Omega)$
and $T_t^* L_p(\overline \Omega,\mu) \subset C(\overline \Omega)$ for all $t > 0$.
It follows from \cite[Theorem~2.1]{AE8} that the operator $T_t$ has a continuous 
kernel $K_t \in C(\overline \Omega \times \overline \Omega)$ for all $t > 0$.
Then
\[
(S_t u)(x) = \int_\Omega K_t(x,y) \, u(y) \, dy
\]
for all $t > 0$, $u \in L_2(\Omega)$
and $x \in \overline \Omega$.
If $t > 0$, then $K_t \geq 0$ almost everywhere on $\Omega \times \Omega$ 
since  $S_t$ is a positive operator.
Hence by continuity $K_t(x,y) \geq 0$ for all $t > 0$ and $x,y \in \overline \Omega$.

Finally, let $t > 0$ and $x,y \in \overline \Omega$.
By Assumption~\ref{cmax304-3} there exist $s > 0$ and $w \in L_p(\Omega)$ such that 
$(S^*_s w)(y) \neq 0$.
There are $t_1,t_2 \in (0,\infty)$ such that $t = t_1 + t_2$ and $t_1 < s$.
Define $v \colon \overline \Omega \to \Ri$ by $v(z) = K_{t_1}(z,y)$.
Then $v \neq 0$ since 
\[
0 \neq (S_s^*w)(y) = (S_{t_1}^* S_{s-t_1}^* w)(y) = \int_\Omega v(z) \, (S_{s-t_1}^*w)(z) \, dz.
\]
Therefore, $K_t(x,y) = (S_{t_2} v)(x) > 0$ by Corollary~\ref{cmax301}\ref{cmax301-1},
where we use Assumptions~\ref{cmax304-2} and~\ref{cmax304-1}.
\end{proof}

\subsection{Irreducibility on $C_0(\Omega)$} \label{Smax3.3}

Analogously to Corollary~\ref{cmax301} one can use Theorem~\ref{tmax306} 
to derive irreducibility for semigroups on $C_0(\Omega)$.
This yields the following corollary.
Note that $\Omega$ does not have to be bounded in this subsection.

\begin{cor} \label{cmax302}
Let $\Omega \subset \Ri^d$ be an open set and $p \in [1,\infty)$.
Let $S$ be a positive irreducible holomorphic $C_0$-semigroup on $L_p(\Omega)$.
Suppose 
\begin{tabelR}
\item \label{cmax302-c1}
$S_t L_p(\Omega) \subset C_0(\Omega)$ for all $t > 0$
and 
\item \label{cmax302-c2}
for all $x \in \Omega$ there are $t > 0$ and $w \in L_p(\Omega)$
such that $(S_t w)(x) \neq 0$.
\end{tabelR}
Then one has the following.
\begin{tabel}
\item \label{cmax302-1}
For all $t > 0$ the operator $S_t \colon L_p(\Omega) \to C_0(\Omega)$
is positivity improving.
This means $(S_t u)(x) > 0$ for all $x \in \Omega$, $t > 0$ 
and $u \in L_p(\Omega)$ with $u \geq 0$ and $u \neq 0$.
\item \label{cmax302-2}
Suppose that for all $t > 0$ the operator $S_t|_{C_0(\Omega) \cap L_p(\Omega)}$
extends to a continuous operator $T_t$ from $C_0(\Omega)$ into $C_0(\Omega)$.
Then the semigroup $T$ is irreducible on $C_0(\Omega)$.
\end{tabel}
\end{cor}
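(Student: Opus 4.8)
The plan is to follow the same two-step pattern that was used for Corollary~\ref{cmax301}, simply substituting the characterisation of closed ideals in $C_0(\Omega)$ for the one in $C(\overline\Omega)$. Statement~\ref{cmax302-1} is the direct analogue of Corollary~\ref{cmax301}\ref{cmax301-1}: apply Theorem~\ref{tmax306} with $X = \Omega$. Conditions~\ref{cmax302-c1} and~\ref{cmax302-c2} of the corollary are exactly conditions~\ref{tmax306-c1} and~\ref{tmax306-c2} of Theorem~\ref{tmax306} for every single point $x \in \Omega$, so for each such $x$ we get $(S_t u)(x) > 0$ whenever $u \ge 0$ and $u \ne 0$; ranging over all $x \in \Omega$ gives that $S_t u(x) > 0$ everywhere on $\Omega$, which by the description of $\gg 0$ in $C_0(\Omega)$ recalled in Section~\ref{Smax2} is precisely the statement that $S_t u \gg 0$ in $C_0(\Omega)$, i.e.\ that $S_t \colon L_p(\Omega) \to C_0(\Omega)$ is positivity improving.

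For Statement~\ref{cmax302-2} I would argue as follows. Let $J \subset C_0(\Omega)$ be a closed ideal with $T_t J \subset J$ for all $t > 0$, and suppose $J \ne \{0\}$; the goal is $J = C_0(\Omega)$. By the characterisation of closed ideals in $C_0(\Omega)$ recalled in Section~\ref{Smax2}, there is a closed set $B \subset \Omega$ with $J = \{ u \in C_0(\Omega) : u|_B = 0 \}$, and $J \ne \{0\}$ forces $B \ne \Omega$. Pick any $u \in J$ with $u > 0$ (such a $u$ exists since $J \neq \{0\}$ and $J$ is a lattice ideal, so $|u| \in J$ for $0 \neq u \in J$). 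Since $u \in C_0(\Omega) \cap L_p(\Omega)$ and the restriction of $S_t$ to this space agrees with $T_t$, for any $t > 0$ we have $T_t u = S_t u$, and by Statement~\ref{cmax302-1} the latter satisfies $(S_t u)(x) > 0$ for every $x \in \Omega$. But $T_t u \in J$, hence $(T_t u)|_B = 0$; combining these, $B$ must be empty, so $J = C_0(\Omega)$. Thus the only invariant closed ideals are $\{0\}$ and $C_0(\Omega)$, which is irreducibility of $T$.

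The one genuinely non-routine point — and the reason Statement~\ref{cmax302-2} carries the extra hypothesis that $S_t|_{C_0(\Omega)\cap L_p(\Omega)}$ extends to $T_t$, rather than simply restricting as in the $C(\overline\Omega)$ case — is that $C_0(\Omega)$ need not be contained in $L_p(\Omega)$ when $\Omega$ is unbounded, so one cannot directly say $T_t = S_t|_{C_0(\Omega)}$. I would therefore be careful to take the element $u \in J$ above \emph{within} $C_0(\Omega) \cap L_p(\Omega)$, where the two operators genuinely agree, so that the pointwise positivity from Statement~\ref{cmax302-1} transfers to $T_t u$. One should check that $C_0(\Omega) \cap L_p(\Omega)$ is itself a (dense) sublattice ideal of $C_0(\Omega)$, so that truncating $|u|$ by a compactly supported cut-off produces a nonzero element of $J \cap L_p(\Omega)$ to which the argument applies; this truncation step is the only thing requiring a sentence of verification, and it is entirely elementary.
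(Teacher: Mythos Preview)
Your proposal is correct and follows precisely the route the paper indicates: the paper gives no detailed proof of Corollary~\ref{cmax302} but simply says it is obtained ``analogously to Corollary~\ref{cmax301}'' from Theorem~\ref{tmax306}, which is exactly what you do. Your additional care with the inclusion $C_0(\Omega)\not\subset L_p(\Omega)$ in the unbounded case---producing a nonzero element of $J\cap C_c(\Omega)$ via multiplication by a compactly supported cut-off, which stays in the ideal $J$ since $0\le \chi|u|\le |u|$---is a genuine point the paper leaves implicit, and your handling of it is correct.
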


Note that if $\Omega$ is bounded, then $C_0(\Omega) \subset L_p(\Omega)$ 
and the operator $S_t|_{C_0(\Omega)}$ is indeed a 
continuous operator from $C_0(\Omega)$ into $C_0(\Omega)$.
Moreover Condition~\ref{cmax302-c2} in Corollary~\ref{cmax302} is satisfied 
if $\Omega$ is bounded, Condition~\ref{cmax302-c1} is valid 
and $T$ is a $C_0$-semigroup
on $C_0(\Omega)$, where $T$ is defined as in~\ref{cmax302-2}.

Similarly as in the proof of Corollary~\ref{cmax304} 
we obtain a kernel for the semigroup in case $\Omega$ is bounded.

\begin{cor} \label{cmax305}
Let $\Omega \subset \Ri^d$ be a bounded open set and 
$S$ a semigroup on $L_2(\Omega)$.
Let $p \in [2,\infty)$.
Suppose that 
\begin{tabelR}
\item \label{cmax305-1}
$S_t L_p(\Omega) \subset C_0(\Omega)$
and $S_t^* L_p(\Omega) \subset C_0(\Omega)$ for all $t > 0$,
\item \label{cmax305-2}
for all $x \in \Omega$ there are $t > 0$ and $w \in L_p(\Omega)$
such that $(S_t w)(x) \neq 0$, and 
\item \label{cmax305-3}
for all $x \in \Omega$ there are $t > 0$ and $w \in L_p(\Omega)$
such that $(S^*_t w)(x) \neq 0$.
\end{tabelR}
Further suppose that $(S_t|_{L_p(\Omega)})_{t > 0}$ 
is a positive irreducible holomorphic $C_0$-semigroup on $L_p(\Omega)$.
Then for $t > 0$ there exists a function $K_t \in C_0(\Omega \times \Omega)$
such that 
\[
(S_t u)(x) = \int_\Omega K_t(x,y) \, u(y) \, dy
\]
for all $u \in L_2(\Omega)$ and $x \in \Omega$.
Moreover, 
$K_t(x,y) > 0$ for all $x,y \in \Omega$ and $t > 0$.
\end{cor}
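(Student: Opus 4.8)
The plan is to mimic the proof of Corollary~\ref{cmax304}, but here the situation is in fact simpler: since $\Omega$ is open in $\Ri^d$ it is locally compact and $\sigma$-compact, and the Lebesgue measure is already strictly positive on each non-empty open subset of $\Omega$, so there is no need to pass to an auxiliary measure on $\overline \Omega$. First I would apply \cite[Theorem~2.1]{AE8} directly to the semigroup $S$ on $L_2(\Omega)$: by Assumption~\ref{cmax305-1} one has $S_t L_p(\Omega) \subset C_0(\Omega)$ and $S_t^* L_p(\Omega) \subset C_0(\Omega)$ for all $t > 0$, and $L_p(\Omega) \subset L_2(\Omega)$ because $p \ge 2$ and $\Omega$ is bounded; hence \cite[Theorem~2.1]{AE8} yields, for every $t > 0$, a kernel $K_t \in C_0(\Omega \times \Omega)$ with $(S_t u)(x) = \int_\Omega K_t(x,y) \, u(y) \, dy$ for all $u \in L_2(\Omega)$ and $x \in \Omega$ (this is the analogue of the conclusion $K_t \in C(\overline \Omega \times \overline \Omega) = C_0(\overline \Omega \times \overline \Omega)$ obtained in Corollary~\ref{cmax304}).

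Next I would record two routine consequences of this representation. Since $S_t \ge 0$, the kernel satisfies $K_t \ge 0$ almost everywhere on $\Omega \times \Omega$, hence $K_t(x,y) \ge 0$ for all $x,y \in \Omega$ by continuity. Moreover, combining the semigroup law $S_{t_1 + t_2} = S_{t_2} S_{t_1}$ with Fubini's theorem (admissible since $K_{t_2}(x,\cdot)$ and $K_{t_1}(\cdot,y)$ are bounded and $\Omega$ has finite measure) and uniqueness of the kernel gives the Chapman--Kolmogorov identity
\[
K_{t_1 + t_2}(x,y) = \int_\Omega K_{t_2}(x,z) \, K_{t_1}(z,y) \, dz = \big( S_{t_2}[K_{t_1}(\cdot,y)] \big)(x)
\]
for all $t_1,t_2 > 0$ and $x,y \in \Omega$, where $K_{t_1}(\cdot,y) \in C_0(\Omega) \subset L_p(\Omega)$ since a section of a function in $C_0(\Omega \times \Omega)$ again lies in $C_0(\Omega)$.

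To prove strict positivity I would fix $t > 0$ and $x,y \in \Omega$. By Assumption~\ref{cmax305-3} there are $s > 0$ and $w \in L_p(\Omega)$ with $(S_s^* w)(y) \neq 0$; choose $t_1, t_2 \in (0,\infty)$ with $t = t_1 + t_2$ and $t_1 < s$. Because the kernel of $S_{t_1}^*$ is $(a,b) \mapsto K_{t_1}(b,a)$, the function $v := K_{t_1}(\cdot,y) \in C_0(\Omega)$ satisfies
\[
0 \neq (S_s^* w)(y) = \big( S_{t_1}^* S_{s-t_1}^* w \big)(y) = \int_\Omega v(z) \, (S_{s-t_1}^* w)(z) \, dz ,
\]
so $v \neq 0$; together with $v \ge 0$ this makes $v$ admissible in Corollary~\ref{cmax302}\ref{cmax302-1}, which applies thanks to Assumptions~\ref{cmax305-1} and~\ref{cmax305-2} and the hypothesis that $(S_t|_{L_p(\Omega)})_{t>0}$ is a positive irreducible holomorphic $C_0$-semigroup on $L_p(\Omega)$. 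It yields $(S_{t_2} v)(x) > 0$, and by the Chapman--Kolmogorov identity this equals $K_t(x,y)$, completing the proof.

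The only genuinely non-trivial ingredient is \cite[Theorem~2.1]{AE8}, so the step I expect to require the most care is checking that its measure-theoretic hypotheses are met with the underlying space taken to be the (non-compact) open set $\Omega$ equipped with Lebesgue measure, rather than the compact space $\overline \Omega$ used in Corollary~\ref{cmax304}; once that is in place the remaining arguments are straightforward adaptations and I do not anticipate any further obstacle.
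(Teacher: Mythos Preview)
Your argument for strict positivity of $K_t$ is essentially identical to the paper's (you invoke Corollary~\ref{cmax302} where the paper invokes Corollary~\ref{cmax301}, which is the natural substitution). The only real difference is in how $K_t \in C_0(\Omega \times \Omega)$ is obtained.

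The paper does \emph{not} apply \cite[Theorem~2.1]{AE8} directly on the open set $\Omega$. Instead it repeats the proof of Corollary~\ref{cmax304} verbatim---working on the compact space $\overline\Omega$ with the measure $\mu$---to first obtain $K_t \in C(\overline\Omega \times \overline\Omega)$, and then proves the boundary vanishing by hand: for $x \in \partial\Omega$ the inclusion $S_t L_p(\Omega) \subset C_0(\Omega)$ gives $(S_t u)(x) = 0$ for all $u$, hence $K_t(x,z) = 0$ for all $z$ by continuity; dually $K_t(z,y) = 0$ for $y \in \partial\Omega$; the Chapman--Kolmogorov identity then yields $K_{2t} \in C_0(\Omega \times \Omega)$. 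Your direct route is shorter \emph{if} \cite[Theorem~2.1]{AE8} is indeed stated for locally compact (rather than compact) spaces with the conclusion landing in $C_0$; the paper's detour sidesteps precisely the verification you flagged as uncertain, reusing only the compact-space version already in play in Corollary~\ref{cmax304}. So the concern you raised is exactly the point where the two proofs diverge, and the paper's choice suggests the authors preferred not to rely on that extension.
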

\begin{proof}
All is similar as in the proof of Corollary~\ref{cmax304},
but one obtains that $K_t \in C(\overline \Omega \times \overline \Omega)$.
It remains to show that $K_t \in C_0(\Omega \times \Omega)$.
Let $t > 0$ and $x \in \partial \Omega$.
Since $S_t L_p(\Omega) \subset C_0(\Omega)$ it follows that 
\[
0 = (S_t u)(x) = \int_\Omega K_t(x,z) \, u(z) \, dz
\]
for all $u \in C_c(\Omega)$.
Hence $K_t(x,z) = 0$ for almost all $z \in \Omega$ and by continuity 
for all $z \in \Omega$.
By duality $K_t(z,y) = 0$ for all $z \in \Omega$ and $y \in \partial \Omega$.
Because 
\[
K_{2t}(x,y) = \int_\Omega K_t(x,z) \, K_t(z,y) \, dz,
\]
one deduces that 
$K_t(x,y) = 0$ if $x \in \partial \Omega$ or $y \in \partial \Omega$.
So $K_{2t} \in C_0(\Omega \times \Omega)$.
\end{proof}

\subsection{A note on the long-time behavior} \label{Smax3.4}

It is worthwhile to say a few sentences on how the properties that we discussed above
are related to the long-time of the semigroup.

\begin{remark} \label{rmax303}
In the situation of Corollary~\ref{cmax301}, 
and for bounded $\Omega$ in the situation of Corollary~\ref{cmax302},
the semigroup on $L_p(\Omega)$ consists of compact operators. 
Let $-A$ be the generator and $\lambda_1$ be the principal eigenvalue of $A$.
Then by \cite[Corollary~C-III.3.16]{Nag}
there is a {\bf spectral gap} in the sense that there exists an $\varepsilon > 0$ 
such that
$ \{ \lambda \in \sigma(A) : \RRe \lambda \leq \lambda_1 + \varepsilon \} = \{ \lambda_1 \} $.
Moreover, if $\lambda_1 = 0$, then $S_t$ converges in $\cl(L_p(\Omega))$
to a rank-one projection 
if $t \to \infty$ (see \cite[Proposition~C-III.3.5]{Nag}).
The same is valid in the case of Subsection~\ref{Smax3.3}
 for the semigroup $(S_t|_{C_0(\Omega)})_{t > 0}$
in $\cl(C_0(\Omega))$ by the semigroup property.
\end{remark}

\section{Strict positivity of principal eigenvectors and other applications} \label{Smax4}

In this section we use the theorems from Section~\ref{Smax3}
to establish strict positivity of the principal eigenfunction
of an elliptic operator for three types of boundary conditions:
Dirichlet (Subsection~\ref{Smax5.2}), Robin (Subsection~\ref{Smax5.1}) 
and mixed  (Subsection~\ref{Smax4.3}). For each of these boundary conditions we prove, besides strict positivity of the principal eigenvector, also irreducibility of the corresponding semigroup on a suitable space of continuous functions and a positivity improving property for the corresponding elliptic problem. Moreover, in Subsection~\ref{Smax5.4} we shall show that our results have a surprising consequence for elliptic problems with complex Robin boundary conditions.

Throughout this section, let $\Omega \subset \Ri^d$ be a bounded non-empty, open and connected 
set with boundary $\Gamma = \partial \Omega$.
For all $k,l \in \{1,\ldots,d\}$ let $a_{kl},b_k,c_k,c_0 \in L_\infty(\Omega,\Ri)$. We assume that the coefficients $a_{kl}$ satisfy a uniform ellipticity condition, namely that there exists a $\mu > 0$ such that, for almost all $x \in \Omega$, the inequality
\[
\RRe \sum_{k,l=1}^d a_{kl}(x) \, \xi_k \, \overline{\xi_l}
\geq \mu \, |\xi|^2
\]
holds for all $\xi \in \Ci^d$. In the following subsections we will define elliptic operators with the coefficients $a_{kl},b_k,c_k,c_0$ by means of form methods. 
Loosely speaking, the operator is equal to 
\[
u \mapsto 
- \sum_{k,l=1}^d \partial_l \, a_{kl} \, \partial_k u
- \sum_{k=1}^d \partial_k \, b_k \, u
+ \sum_{k=1}^d c_k \, \partial_k u
+ c_0 \, u
\]
with boundary conditions.
Moreover, depending on the boundary conditions, we will impose different regularity assumptions on the boundary of $\Omega$ in each subsection.

Most results in this section are a combination of theorems from the literature with 
Theorem~\ref{tmax306} and its corollaries. 
For each type of boundary conditions we state a theorem which describes a 
positivity improving property of the parabolic equation, 
and a corollary which yields a similar result for the corresponding elliptic equation.

\subsection{Dirichlet boundary conditions} \label{Smax5.2}

In this subsection we assume that $\Omega$ is Wiener regular. This means that for all $\varphi \in C(\Gamma)$ there exists a 
function $u \in C(\overline \Omega) \cap C^2(\Omega)$ such that $\Delta u = 0$
on $\Omega$ and $u|_\Gamma = \varphi$. For instance, $\Omega$ is Wiener regular if it has Lipschitz boundary.

Define the form $\gota \colon H^1_0(\Omega) \times H^1_0(\Omega) \to \Ci$ by 
\[
\gota(u,v)
= \int_\Omega \sum_{k,l=1}^d a_{kl} \, (\partial_k u) \, \overline{\partial_l v} 
   + \int_\Omega \sum_{k=1}^d b_k \, u \, \overline{\partial_k v} 
   + \int_\Omega \sum_{k=1}^d c_k \, (\partial_k u) \, \overline v
+ \int_\Omega c_0 \, u \, \overline v
.  \]
Then $\gota$ is a closed sectorial form.
Let $A$ be the m-sectorial operator on $L_2(\Omega)$ associated with $\gota$
and let $S$ be the semigroup generated by $-A$ on $L_2(\Omega)$.
Then $S$ is a positive semigroup by \cite[Theorem~2.6 or Corollary~4.3]{Ouh5}
and irreducibility of $S$ follows from \cite[Theorem~4.5]{Ouh5}.
Since the embedding $H^1_0(\Omega) \subset L_2(\Omega)$ is compact, the 
operator $A$ has compact resolvent.
If $t > 0$, then $S_t L_2(\Omega) \subset C_0(\Omega)$ by 
(8) in \cite{AE9}, where we used that $\Omega$ is Wiener regular.
For all $t > 0$ let 
$T_t = S_t|_{C_0(\Omega)} \colon C_0(\Omega) \to C_0(\Omega)$.
Then $T$ is a holomorphic
$C_0$-semigroup by \cite[Theorem~1.3]{AE9}.
Clearly $T$ is positive.
The following result shows that $T$ is also irreducible.

\begin{thm} \label{tmax502}
The operator $A$ on $L_2(\Omega)$ and the semigroup $T$ on $C_0(\Omega)$
have the following properties.
\begin{tabel}
\item \label{tmax502-1} For all $t > 0$ the operator $T_t$ is positivity improving.
In particular, the semigroup $T$ is irreducible.
\item \label{tmax502-2} Let $u$ be the principal eigenfunction of $A$. Then $u \in C_0(\Omega)$ and $u(x) > 0$ for all $x \in \Omega$.
\end{tabel}
\end{thm}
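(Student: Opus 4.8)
The plan is to check that the hypotheses of Corollary~\ref{cmax302} and Corollary~\ref{cmax305} are satisfied for the semigroup $S$ (restricted to a suitable $L_p$) and then simply read off both assertions. The essential point is condition~\ref{cmax302-c2}: for every $x \in \Omega$ there must be a $t > 0$ and a $w \in L_p(\Omega)$ with $(S_t w)(x) \neq 0$. Before that, observe that $S$ is positive, irreducible, and holomorphic on $L_2(\Omega)$ (all stated in the text preceding the theorem), and that $T = (S_t|_{C_0(\Omega)})_{t>0}$ is a holomorphic $C_0$-semigroup on $C_0(\Omega)$ by \cite[Theorem~1.3]{AE9}; in particular $\|T_t v - v\|_\infty \to 0$ as $t \downarrow 0$ for every $v \in C_0(\Omega)$.

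First I would fix $p \in [2,\infty)$ and note that, since $\Omega$ is bounded, $S$ restricts to a $C_0$-semigroup on $L_p(\Omega)$ that is again positive, irreducible (by Lemma~\ref{lmax202}), and holomorphic, and that maps $L_p(\Omega)$ into $C_0(\Omega)$ for all $t > 0$ by the cited embedding $S_t L_2(\Omega) \subset C_0(\Omega)$. Next I would verify condition~\ref{cmax302-c2}. Fix $x \in \Omega$ and choose $v \in C_c(\Omega) \subset C_0(\Omega) \cap L_p(\Omega)$ with $v(x) = 1$. Since $T$ is a $C_0$-semigroup on $C_0(\Omega)$, we have $T_t v \to v$ uniformly, hence $(S_t v)(x) = (T_t v)(x) \to v(x) = 1 \neq 0$ as $t \downarrow 0$; therefore $(S_t v)(x) \neq 0$ for all small $t > 0$. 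This establishes~\ref{cmax302-c2}. Then Corollary~\ref{cmax302}\ref{cmax302-1} gives that $S_t \colon L_p(\Omega) \to C_0(\Omega)$ is positivity improving for all $t > 0$, and part~\ref{cmax302-2} of that corollary — whose hypothesis is met because $C_0(\Omega) \subset L_p(\Omega)$, so $T_t = S_t|_{C_0(\Omega)}$ is automatically the required continuous extension — yields that $T$ is irreducible on $C_0(\Omega)$. Restricting the positivity-improving property of $S_t$ from $L_p(\Omega)$ to the subspace $C_0(\Omega)$ shows that each $T_t$ is positivity improving, which proves~\ref{tmax502-1}.

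For~\ref{tmax502-2}, since $A$ has compact resolvent and $S$ is positive and irreducible on $L_2(\Omega)$, Proposition~\ref{pmax204} provides the principal eigenvalue $\lambda_1$ and a principal eigenfunction $u$ with $u \gg 0$ in $L_2(\Omega)$, in particular $u \geq 0$ and $u \neq 0$. Then $u = e^{-t\lambda_1} S_t u = e^{t\lambda_1}\,(e^{-2t\lambda_1} S_{2t} u)$ — more simply, $u = e^{(t)\lambda_1} S_t u$ for every $t > 0$ after renormalising — so $u$ is a fixed multiple of $S_t u$ and hence $u \in S_t L_p(\Omega) \subset C_0(\Omega)$ (using $u \in L_2(\Omega) \subset L_p(\Omega)$ is false in general, so instead apply $S_t$ twice: $u$ is proportional to $S_{t/2}(S_{t/2} u)$ and $S_{t/2} u \in C_0(\Omega) \subset L_p(\Omega)$). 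Since $u > 0$ and $u \in L_p(\Omega)$, part~\ref{tmax502-1} applied to $w = S_{t/2}u \in L_p(\Omega)_+ \setminus\{0\}$ gives $(S_{t/2} w)(x) > 0$ for all $x \in \Omega$, i.e. $u(x) > 0$ for all $x \in \Omega$ after rescaling, which is the claim.

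The only step requiring genuine care is the chain of restrictions: checking that $S$ really does restrict to a positive, irreducible, holomorphic $C_0$-semigroup on $L_p(\Omega)$ for $p \geq 2$ (boundedness of $\Omega$ and Lemma~\ref{lmax202} handle this), and the bookkeeping that $u \in L_2(\Omega)$ need not a priori lie in $L_p(\Omega)$, so one must pass through one application of $S_t$ to land in $C_0(\Omega) \subset L_p(\Omega)$ before invoking the positivity-improving property. Everything else is a direct appeal to Corollary~\ref{cmax302} and Proposition~\ref{pmax204}.
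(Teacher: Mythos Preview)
Your proof is correct and follows essentially the same route as the paper: part~\ref{tmax502-1} via Corollary~\ref{cmax302} (with condition~\ref{cmax302-c2} verified through the $C_0$-property of $T$, exactly as the remark after Corollary~\ref{cmax302} suggests), and part~\ref{tmax502-2} via Proposition~\ref{pmax204} together with the eigenfunction identity $u = e^{\lambda_1 t} S_t u$.

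One unnecessary complication: you pass to $L_p(\Omega)$ for $p \geq 2$, which then forces the bookkeeping about $u \in L_2(\Omega)$ not lying in $L_p(\Omega)$ and the ``apply $S_t$ twice'' workaround. Since $S$ is already a positive irreducible holomorphic $C_0$-semigroup on $L_2(\Omega)$ with $S_t L_2(\Omega) \subset C_0(\Omega)$, you can simply take $p = 2$ in Corollary~\ref{cmax302} from the start; then $u \in L_2(\Omega)$ with $u > 0$ feeds directly into Corollary~\ref{cmax302}\ref{cmax302-1}, and $u = e^{\lambda_1 t} S_t u$ immediately gives $u \in C_0(\Omega)$ with $u(x) > 0$ for all $x \in \Omega$. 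The reference to Corollary~\ref{cmax305} in your plan is also superfluous, since no kernel statement is needed here.
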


\begin{proof}
Statement~\ref{tmax502-1} follows from Corollary~\ref{cmax302} and Statement~\ref{tmax502-2}
from Proposition~\ref{pmax204}\ref{pmax204-3}.
\end{proof}

\begin{remark}
Theorem~\ref{tmax502} can also be derived from known, but much less elementary results 
from PDE.
Statement~\ref{tmax502-2} 
follows from the Harnack inequality (see for instance \cite[Theorem 8.20]{GT}).
Next, let $t > 0$ and let $K_t$ be the kernel of the operator $S_t$.
The De~Giorgi--Nash theorem implies that $K_t$ is continuous. 
Then the Harnack inequality shows that $K_t$ is strictly positive on $\Omega \times \Omega$.
Therefore $T_t$ is positivity improving and $T$ is irreducible.
\end{remark}

\begin{remark}
For the special case of the Laplacian, the strict parabolic maximum 
principle of Evans \cite[Section~2.3.3]{Evans} was used in \cite[Theorem~3.3]{Are4} to prove Theorem~\ref{tmax502}.
The proof in Evans, however, is based on a mean value property  which is 
not valid for operators with variable coefficients.
\end{remark}

We conclude this subsection with a positivity improving property for the corresponding elliptic problem. 
Since the semigroup $S$ has Gaussian kernel bounds it follows 
that the semigroup $S$  extends to a $C_0$-semigroup on $L_p(\Omega)$ for all $p \in [1,\infty)$.
We denote its generator by $-A_p$. As $A$ has compact resolvent, it follows that $A_p$ has compact resolvent too
 and that the spectrum of $A_p$ coincides with the spectrum of $A$ by \cite{Persson}.
We obtain from Theorem~\ref{tmax502} the following corollary about regularity of the corresponding elliptic problem.

\begin{cor} \label{cmax503}
Let $\lambda \in \Ri$ be smaller than the first eigenvalue of $A$ and let $p \in (d/2,\infty)$. If $u \in D(A_p)$ and $(-\lambda \, I + A_p)u > 0$, then $u \in C_0(\Omega)$ and $u(x) > 0$ for all $x \in \Omega$.
\end{cor}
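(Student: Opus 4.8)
The plan is to reduce the elliptic statement to the parabolic positivity result of Theorem~\ref{tmax502} via a resolvent-as-Laplace-transform argument. First I would observe that since $\lambda$ is strictly below the first eigenvalue $\lambda_1$ of $A$ (and hence of $A_p$, using \cite{Persson}), the operator $-\lambda I + A_p$ is invertible and its resolvent is positive; indeed, for $\lambda < \lambda_1$ one has $\lambda \in \rho(A_p)$ and the standard formula
\[
(-\lambda I + A_p)^{-1} = \int_0^\infty e^{\lambda t} \, S_t \, dt
\]
holds as an absolutely convergent Bochner integral in $\cl(L_p(\Omega))$, because the spectral bound of $-A_p$ equals $-\lambda_1 < -\lambda$ and $S$ is a holomorphic (hence exponentially well-behaved) $C_0$-semigroup. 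Setting $f = (-\lambda I + A_p) u$, which is positive and nonzero by hypothesis, we get $u = \int_0^\infty e^{\lambda t} S_t f \, dt$.

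Next I would upgrade regularity: for $p \in (d/2,\infty)$ the semigroup $S_t$ maps $L_p(\Omega)$ into $C_0(\Omega)$ for each $t>0$ (this is exactly the mapping property recorded before Theorem~\ref{tmax502}, coming from (8) in \cite{AE9} together with Wiener regularity), and the restricted semigroup $T$ is a holomorphic $C_0$-semigroup on $C_0(\Omega)$. Since $u = (-\lambda I + A_p)^{-1} f$ and the latter resolvent, viewed on $C_0(\Omega)$, is again a Laplace transform of $T$, one concludes $u \in C_0(\Omega)$; alternatively, splitting $\int_0^\infty = \int_0^1 + \int_1^\infty$, the tail converges in $\cl(C_0(\Omega))$ by the spectral gap and the piece near $0$ is handled by the $C_0$-semigroup property of $T$. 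Either way $u \in C_0(\Omega)$.

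For strict positivity, I would use Theorem~\ref{tmax502}\ref{tmax502-1}: for every $t>0$ the operator $T_t = S_t|_{C_0(\Omega)}$ is positivity improving, so since $f > 0$ we have $(S_t f)(x) > 0$ for all $x \in \Omega$ and all $t > 0$. Because $e^{\lambda t} > 0$ and $S_t f \geq 0$ for every $t$, pointwise evaluation in the integral $u(x) = \int_0^\infty e^{\lambda t} (S_t f)(x)\, dt$ gives $u(x) \geq 0$; and since the integrand is strictly positive for, say, all $t$ in a set of positive measure (in fact all $t>0$), we obtain $u(x) > 0$ for every $x \in \Omega$. The one point requiring a little care — and the main obstacle — is justifying that the scalar-valued integral identity $u(x) = \int_0^\infty e^{\lambda t}(S_t f)(x)\, dt$ is legitimate pointwise: one must know the Bochner integral $\int_0^\infty e^{\lambda t} S_t f\, dt$ converges in $C_0(\Omega)$ (not just in $L_p$), so that evaluation at $x$ commutes with the integral. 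This follows from the exponential decay $\|T_t\|_{\cl(C_0(\Omega))} \le M e^{-(\lambda_1-\varepsilon)t}$ afforded by the spectral gap of Remark~\ref{rmax303} (the spectrum of the generator on $C_0(\Omega)$ coincides with that of $A$ by \cite{Persson} and compactness of the resolvent), combined with continuity of $t \mapsto T_t f$ on $(0,\infty)$ and integrability of $\|T_t f\|_\infty$ near $0$ via the $C_0$-property; I would spell this out and then the conclusion drops out.
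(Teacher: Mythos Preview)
Your approach is sound and genuinely different from the paper's. The paper never writes the resolvent as a Laplace transform; instead it picks an auxiliary $\mu < \lambda$ with $\mu$ below the spectral bound of the generator $A_c$ on $C_0(\Omega)$, uses that irreducibility of $T$ makes $(-\mu I + A_c)^{-1}$ positivity improving (citing \cite{Nag}), and then applies the resolvent identity
\[
u = (\lambda-\mu)(-\mu I+A_p)^{-1}(-\lambda I+A_p)^{-1}f + (-\mu I+A_p)^{-1}f \ge (\lambda-\mu)(-\mu I+A_c)^{-1}\big((-\lambda I+A_p)^{-1}f\big) \gg 0,
\]
after noting via \cite[Corollary~2.10]{AE9} that $(-\lambda I+A_p)^{-1}$ and $(-\mu I+A_p)^{-1}$ map $L_p(\Omega)$ into $C_0(\Omega)$. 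This is purely algebraic and sidesteps all convergence questions for the Bochner integral.

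Your Laplace-transform route also works, but two points need tightening. First, when you invoke Theorem~\ref{tmax502}\ref{tmax502-1} to get $(S_t f)(x)>0$, note that $f$ lies only in $L_p(\Omega)$, not in $C_0(\Omega)$, so strictly speaking you need Corollary~\ref{cmax302}\ref{cmax302-1} (or factor $S_t = T_{t/2} S_{t/2}$ and observe $S_{t/2} f \in C_0(\Omega)$ is positive and nonzero). Second, and more important, your justification of integrability of $t\mapsto \|S_t f\|_{C_0(\Omega)}$ near $t=0$ via ``the $C_0$-semigroup property of $T$'' does not work as stated: that property controls $T_t g$ for $g\in C_0(\Omega)$, but $f$ is only in $L_p$. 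The correct argument is that the Gaussian kernel bounds give $\|S_t\|_{L_p\to L_\infty}\le C\,t^{-d/(2p)}$, and this exponent is integrable near $0$ precisely because $p>d/2$; this is where the hypothesis on $p$ enters your proof. With that fix, the Bochner integral converges in $C_0(\Omega)$ and your pointwise argument goes through. The paper's resolvent-identity trick buys you a cleaner proof that avoids these analytic estimates; your argument is more explicit about where the strict positivity actually comes from.
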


\begin{proof}
Denote the generator of $T$ by $-A_c$ and choose
$\mu \in \Ri$ such that $\mu < \lambda$ and $\mu < \inf \{\RRe \nu: \, \nu \in \sigma(A_c)\}$. 
The semigroup $T$ is irreducible according to Theorem~\ref{tmax502}\ref{tmax502-1}.
Hence it follows from \cite[Definition~C-III.3.1]{Nag} that the 
resolvent $(-\mu \, I +A_c)^{-1}$ is positivity improving on $C_0(\Omega)$.
Note that the operator $(-\mu \, I + A_c)^{-1}$ coincides 
with the restriction of $(-\mu \, I +A_p)^{-1}$ to $C_0(\Omega)$.

One deduces from \cite[Corollary~2.10]{AE9}
that the range of the resolvents $(-\lambda+A_p)^{-1}$ and $(-\mu+A_p)^{-1}$ are contained in 
$C_0(\Omega)$, where we use that $p > d/2$. 
Set $f = (-\lambda \, I + A_p)u$.
Then the resolvent identity implies that 
\begin{eqnarray*}
u = (-\lambda \, I + A_p)^{-1}f 
& = & (\lambda-\mu)(-\mu \, I+A_p)^{-1} (-\lambda \, I+A_p)^{-1}f + (-\mu \, I+A_p)^{-1}f  \\
& \geq & (\lambda-\mu)(-\mu \, I+A_p)^{-1} (-\lambda \, I+A_p)^{-1}f  \\
& = & (\lambda-\mu)(-\mu \, I+A_c)^{-1} (-\lambda \, I+A_p)^{-1}f 
\gg 0,
\end{eqnarray*}
where $\gg$ is to be understood in $C_0(\Omega)$. 
This proves the corollary.
\end{proof}

Note that $\sigma(A_c) = \sigma(A_2)$ by \cite[Proposition~3.10.3]{ABHN}.

\subsection{Robin boundary conditions} \label{Smax5.1}

In this subsection we assume in addition that $\Omega$ has Lipschitz boundary. 
Further let $\beta \in L_\infty(\Gamma,\Ri)$.
Define the form $\gota \colon H^1(\Omega) \times H^1(\Omega) \to \Ci$ by 
\begin{eqnarray*}
\gota(u,v)
& = & \int_\Omega \sum_{k,l=1}^d a_{kl} \, (\partial_k u) \, \overline{\partial_l v} 
   + \int_\Omega \sum_{k=1}^d b_k \, u \, \overline{\partial_k \, v} 
   + \int_\Omega \sum_{k=1}^d c_k \, (\partial_k u) \, \overline v
\\* 
& & \hspace*{20mm} {}
+ \int_\Omega c_0 \, u \, \overline v
+ \int_\Gamma \beta \, (\Tr u) \, \overline{\Tr v}
.  
\end{eqnarray*}
Then $\gota$ is a closed sectorial form. Let $A$ be the m-sectorial operator 
on $L_2(\Omega)$ associated with $\gota$ and let $S$ be the semigroup generated by $-A$ 
on $L_2(\Omega)$. Then $S$ is a positive semigroup by \cite[Theorem~2.6]{Ouh5}.
Moreover, $S$ is irreducible on $L_2(\Omega)$ by 
\cite[Corollary~2.11]{Ouh5} together with the discussion on 
page~106 in \cite{Ouh5}.
Since $\Omega$ is bounded and Lipschitz, the operator~$A$ has compact resolvent.
If $t > 0$, then $S_t L_2(\Omega) \subset C(\overline \Omega)$ by 
\cite[Remark~6.2]{AE8}.
Let $T_t = S_t|_{C(\overline \Omega)} \colon C(\overline \Omega) \to C(\overline \Omega)$
for all $t > 0$.
Then $T$ is a $C_0$-semigroup by \cite[Remark~6.2]{AE8}.

\begin{thm} \label{tmax501}
The operator $A$ on $L_2(\Omega)$ and the semigroup $T$ on $C(\overline{\Omega})$
have the following properties.
\begin{tabel}
\item \label{tmax502check-1} For all $t > 0$ the operator $T_t$ is positivity improving. 
In particular, the semigroup $T$ is irreducible.
\item \label{tmax502check-2} Let $u$ be the principal eigenvalue of $A$. Then $u \in C(\overline{\Omega})$ and $u(x) > 0$ for all $x \in \overline{\Omega}$.
\end{tabel}
\end{thm}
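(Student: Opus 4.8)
The plan is to deduce Theorem~\ref{tmax501} from Corollary~\ref{cmax301} with $p = 2$ and $X = \overline\Omega$, exactly as Theorem~\ref{tmax502} was deduced from Corollary~\ref{cmax302}. First I would record that the hypotheses of Corollary~\ref{cmax301} are met: the semigroup $S$ on $L_2(\Omega)$ is positive, holomorphic (being associated with a sectorial form) and irreducible, and $\Omega$ is bounded and open; condition~\ref{cmax301-c1}, i.e.\ $S_t L_2(\Omega) \subset C(\overline\Omega)$ for all $t > 0$, holds by \cite[Remark~6.2]{AE8}. For condition~\ref{cmax301-c2} I would invoke the remark following Corollary~\ref{cmax301}: since $T = (S_t|_{C(\overline\Omega)})_{t>0}$ is a $C_0$-semigroup on $C(\overline\Omega)$ (again \cite[Remark~6.2]{AE8}), for each $x \in \overline\Omega$ and any $w \in C(\overline\Omega)$ with $w(x) \neq 0$ one has $(S_t w)(x) = (T_t w)(x) \to w(x) \neq 0$ as $t \downarrow 0$, so $(S_t w)(x) \neq 0$ for small $t > 0$. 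Hence Corollary~\ref{cmax301} applies.

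Statement~\ref{tmax502check-1} is then immediate: Corollary~\ref{cmax301}\ref{cmax301-1} gives that $S_t \colon L_2(\Omega) \to C(\overline\Omega)$ is positivity improving, which is precisely the assertion that $(T_t u)(x) = (S_t u)(x) > 0$ for all $x \in \overline\Omega$, all $t > 0$ and all $u \in C(\overline\Omega)$ with $u > 0$ (using $C(\overline\Omega) \subset L_2(\Omega)$ since $\Omega$ is bounded). Irreducibility of $T$ on $C(\overline\Omega)$ is Corollary~\ref{cmax301}\ref{cmax301-2}, which follows from the characterisation of closed ideals of $C(\overline\Omega)$ as $\{u : u|_B = 0\}$ for closed $B \subset \overline\Omega$: a positivity improving semigroup cannot leave such a proper ideal invariant.

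For Statement~\ref{tmax502check-2}, $A$ has compact resolvent (because $\Omega$ is bounded and Lipschitz, so $H^1(\Omega) \hookrightarrow L_2(\Omega)$ is compact), and $S$ is positive and irreducible, so Proposition~\ref{pmax204} applies and yields a principal eigenvector $u$ with $Au = \lambda_1 u$ and $u \gg 0$ in $L_2(\Omega)$. Writing $t > 0$ and using $u = e^{\lambda_1 t} S_t u$ together with $S_t u \in C(\overline\Omega)$, we get $u \in C(\overline\Omega)$; and applying Statement~\ref{tmax502check-1} to $u$ (which is $> 0$) gives $u(x) = e^{\lambda_1 t}(S_t u)(x) > 0$ for all $x \in \overline\Omega$. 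Equivalently, as noted in Section~\ref{Smax2}, $u \gg 0$ in $C(\overline\Omega)$ means exactly that there is a $\delta > 0$ with $u(x) \geq \delta$ for all $x \in \overline\Omega$.

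I do not expect any real obstacle here: the entire content of the theorem has been front-loaded into the abstract machinery of Section~\ref{Smax3} and the form-theoretic facts cited at the start of Subsection~\ref{Smax5.1}. The only point requiring a word of care is the verification of condition~\ref{cmax301-c2}, i.e.\ that no point $x \in \overline\Omega$ is annihilated by every $S_t w$; this is handled by the strong continuity of $T$ on $C(\overline\Omega)$ as above, which is why that fact was stated just before the theorem.
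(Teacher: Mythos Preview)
Your proposal is correct and follows the paper's own proof, which simply says ``Statement~\ref{tmax502check-1} follows from Corollary~\ref{cmax301} and Statement~\ref{tmax502check-2} from Proposition~\ref{pmax204}\ref{pmax204-3}.'' You have spelled out the verification of the hypotheses of Corollary~\ref{cmax301} (in particular the use of strong continuity of $T$ for condition~\ref{cmax301-c2}) and the passage from $u \gg 0$ in $L_2(\Omega)$ to $u \in C(\overline\Omega)$ with $u(x)>0$ everywhere via $u = e^{\lambda_1 t} S_t u$, but these are exactly the details the paper leaves implicit.
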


\begin{proof}
Statement~\ref{tmax502check-1} follows from Corollary~\ref{cmax301} and 
Statement~\ref{tmax502check-2} from Proposition~\ref{pmax204}\ref{pmax204-3}.
\end{proof}

Note that it follows again from the Harnack inequality that $u(x) > 0$ for all $x \in \Omega$.
The above theorem, however, says much more, namely that $u$ is also strictly positive on the boundary
of $\Omega$ and hence, bounded away from $0$.  This is of interest in the 
study of nonlinear equations, and is new under such general conditions as we have here.
Under much stronger regularity conditions, for instance if $\Omega$ has a $C^2$-boundary and all coefficients are smooth, one can
of course deduce Theorem~\ref{tmax501} from Hopf's minimum principle,
see for example \cite[Theorem~1.2]{LopezGomez1}

Again, we also derive a corresponding elliptic result. 
By the Gaussian kernel bounds of \cite[Theorem~2.2]{Daners8} and \cite{Daners}
the semigroup $S$ on $L_2(\Omega)$ extrapolates to a $C_0$-semigroup
on $L_p(\Omega)$ for all $p \in [1,\infty)$, 
whose generator we denote by $-A_p$. If $p > d/2$, then the resolvent operators of
$A_p$ map $L_p(\Omega)$ into $C(\overline{\Omega})$
by \cite[Theorem~3.14(iv)]{Nit4}.
Hence by exactly the same arguments as in the proof of Corollary~\ref{cmax503} we 
can obtain the following consequence of Theorem~\ref{tmax501}.

\begin{cor} \label{cmax504}
Let $\lambda \in \Ri$ be smaller than the first eigenvalue of $A$, let $p \in (d/2,\infty)$ and $u \in D(A_p)$. Suppose that $(-\lambda \, I + A_p)u > 0$. Then $u \in C(\overline{\Omega})$ and $u(x) > 0$ for all $x \in \overline{\Omega}$.
\end{cor}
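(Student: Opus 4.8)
The plan is to mirror the proof of Corollary~\ref{cmax503}, with $C(\overline\Omega)$ and Theorem~\ref{tmax501} taking over the roles that $C_0(\Omega)$ and Theorem~\ref{tmax502} played there. First I would denote by $-A_c$ the generator of the $C_0$-semigroup $T$ on $C(\overline\Omega)$ constructed just before Theorem~\ref{tmax501}, and fix a real number $\mu$ with $\mu < \lambda$ and $\mu < \inf\{\RRe\nu : \nu \in \sigma(A_c)\}$. Such a $\mu$ exists because $\lambda$ lies below the first eigenvalue of $A$ and, analogously to the Dirichlet case, $\sigma(A_c) = \sigma(A_2)$, so the spectral bound of $A_c$ coincides with that of $A$. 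Since $T$ is irreducible by Theorem~\ref{tmax501}\ref{tmax502check-1} and $\mu$ lies to the left of $\sigma(A_c)$, the resolvent $(-\mu I + A_c)^{-1}$ is positivity improving on $C(\overline\Omega)$, and it is the restriction to $C(\overline\Omega)$ of $(-\mu I + A_p)^{-1}$. Moreover, because $p > d/2$, the resolvents of $A_p$ map $L_p(\Omega)$ into $C(\overline\Omega)$ by \cite[Theorem~3.14(iv)]{Nit4}.

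Next I would put $f = (-\lambda I + A_p)u$, which lies in $L_p(\Omega)$ and satisfies $f \geq 0$, $f \neq 0$, and $u = (-\lambda I + A_p)^{-1} f$; in particular $u \in C(\overline\Omega)$ by the mapping property above. Since $\lambda$ is below the first eigenvalue, the operator $(-\lambda I + A_p)^{-1}$ is positive and injective, so $u$ is a positive, nonzero element of $C(\overline\Omega)$. The resolvent identity, applied exactly as in the proof of Corollary~\ref{cmax503}, then expresses $u = (-\lambda I + A_p)^{-1} f$ as the positive element $(-\mu I + A_p)^{-1} f$ plus $(\lambda - \mu)\,(-\mu I + A_p)^{-1}(-\lambda I + A_p)^{-1} f = (\lambda - \mu)\,(-\mu I + A_c)^{-1} u$. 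Since $\lambda - \mu > 0$, since $u$ is positive and nonzero in $C(\overline\Omega)$, and since $(-\mu I + A_c)^{-1}$ is positivity improving, the latter summand is $\gg 0$ in $C(\overline\Omega)$; hence $u \gg 0$, that is, $u(x) > 0$ for all $x \in \overline\Omega$.

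The computation is verbatim the one in Corollary~\ref{cmax503}, so the only points that need checking are the bookkeeping ones: that $\mu$ can be chosen to the left of both $\sigma(A_p)$ and $\sigma(A_c)$, which relies on the $p$-independence of the spectrum (a consequence of the Gaussian kernel bounds of \cite[Theorem~2.2]{Daners8}) and on $\sigma(A_c) = \sigma(A_2)$; that the resolvents of $A_2$, $A_p$ and $A_c$ are compatible; and that $f \in L_p(\Omega)$ so that \cite[Theorem~3.14(iv)]{Nit4} applies to it. None of these is a genuine obstacle, as each mirrors the corresponding point in the Dirichlet argument.
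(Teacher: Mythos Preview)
Your proposal is correct and follows exactly the approach the paper intends: the authors simply state that Corollary~\ref{cmax504} is obtained ``by exactly the same arguments as in the proof of Corollary~\ref{cmax503}'', replacing $C_0(\Omega)$, Theorem~\ref{tmax502} and \cite[Corollary~2.10]{AE9} with $C(\overline\Omega)$, Theorem~\ref{tmax501} and \cite[Theorem~3.14(iv)]{Nit4}, respectively. Your write-up reproduces that argument faithfully, and the additional bookkeeping remarks you include (spectral consistency, compatibility of resolvents) are exactly the checks needed to make the transfer rigorous.
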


\subsection{The bottom of the spectrum for complex Robin boundary conditions} \label{Smax5.4}

In this subsection we consider complex Robin boundary conditions and show that 
Theorem~\ref{tmax501} has surprising
consequences for this situation.
Note that in Theorem~\ref{tmax501} the function $\beta$ is real valued.

As in Subsection~\ref{Smax5.1} we assume that $\Omega$ 
has Lipschitz boundary~$\Gamma$. 
For the coefficients of the differential operator 
we assume that $a_{kl} = a_{lk}$ and $b_k = c_k$ for all $k,l \in \{ 1,\ldots,d \}$.

For all $\beta \in L_\infty(\Gamma)$
define the form $\gota_\beta \colon H^1(\Omega) \times H^1(\Omega) \to \Ci$ by 
\begin{eqnarray*}
\gota_\beta(u,v)
& = & \int_\Omega \sum_{k,l=1}^d a_{kl} (\partial_k u) \, \overline{\partial_l v} 
   + \int_\Omega \sum_{k=1}^d b_k \, u \, \overline{\partial_k \, v} 
   + \int_\Omega \sum_{k=1}^d b_k \, (\partial_k u) \, \overline v
\\* 
& & \hspace*{20mm} {}
+ \int_\Omega c_0 \, u \, \overline v
+ \int_\Gamma \beta \, (\Tr u) \, \overline{\Tr v}
.  
\end{eqnarray*}
Then $\gota_\beta$ is a closed sectorial form.
Let $A_\beta$ be the m-sectorial operator associated with $\gota$.
Since $\Omega$ is bounded and Lipschitz, the operator $A_\beta$ has compact resolvent.
Note that $\gota_\beta$ is symmetric if $\beta$ is real valued.

\begin{prop} \label{pmax501.5}
Let $\beta \in L_\infty(\Gamma)$ with $\IIm \beta \neq 0$.
Then $\min \{ \RRe \lambda : \lambda \in \sigma(A_\beta) \} > \min \sigma(A_{\RRe \beta})$.
\end{prop}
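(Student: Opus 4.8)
The plan is to compare the two operators via their semigroups and to exploit the strict positivity coming from Theorem~\ref{tmax501}. Write $\beta_0 = \RRe \beta$, so that $A_{\beta_0}$ is self-adjoint (using $a_{kl}=a_{lk}$, $b_k = c_k$ and $c_0$ real), and let $\lambda_1 = \min\sigma(A_{\beta_0})$ be its principal eigenvalue, with principal eigenfunction $v$. By Theorem~\ref{tmax501} we have $v \in C(\overline\Omega)$ and $v(x) \geq \delta > 0$ for all $x \in \overline\Omega$; in particular $v \gg 0$ in $L_2(\Omega)$ and $\|v\|_2 = 1$ say. The key elementary observation is the pointwise form identity: for $u \in H^1(\Omega)$,
\[
\RRe \gota_\beta(u,u) = \gota_{\beta_0}(|u|,|u|)
\quad\text{whenever } |u| \in H^1(\Omega) \text{ with } |\nabla|u|| = |\nabla u| \text{ a.e.,}
\]
more precisely $\RRe \gota_\beta(u,u) = \gota_{\beta_0}(u,u) = \int_\Omega \sum a_{kl}(\partial_k u)\overline{\partial_l u} + \int_\Omega \sum 2\,b_k\,\RRe(u\,\overline{\partial_k u}) + \int_\Omega c_0 |u|^2 + \int_\Gamma \beta_0 |\Tr u|^2$, since the imaginary part of $\beta$ contributes $i\int_\Gamma (\IIm\beta)|\Tr u|^2$ to $\gota_\beta(u,u)$, which is purely imaginary and drops out of the real part. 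Combined with the Beurling--Deny--Ouhabaz domination structure this gives $\RRe\gota_\beta(u,u) \geq \gota_{\beta_0}(|u|,|u|) \geq \lambda_1 \||u|\|_2^2 = \lambda_1\|u\|_2^2$, hence $\RRe\langle A_\beta u, u\rangle \geq \lambda_1\|u\|_2^2$ for all $u \in D(A_\beta)$, which already yields $\min\{\RRe\lambda:\lambda\in\sigma(A_\beta)\} \geq \lambda_1$. It remains to rule out equality.

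So suppose for contradiction that $\min\{\RRe\lambda : \lambda \in \sigma(A_\beta)\} = \lambda_1$. By Proposition~\ref{pmax204} applied to the semigroup generated by $-A_\beta$ — which is positive, holomorphic, a $C_0$-semigroup, and irreducible, with compact resolvent — there is a principal eigenvalue $\mu_1$ with $\RRe\mu_1 = \lambda_1$ and an eigenfunction $w \gg 0$ in $L_2(\Omega)$, $\|w\|_2 = 1$, with $A_\beta w = \mu_1 w$. Actually I should be a little careful: Proposition~\ref{pmax204} gives the principal eigenvalue $\lambda_1' := \inf\{\RRe\lambda\} = \min\{\RRe\lambda\}$ itself (a real number) as an eigenvalue, with positive eigenfunction. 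So under our assumption $\lambda_1' = \lambda_1$ and $A_\beta w = \lambda_1 w$ with $w \gg 0$. Then
\[
\lambda_1 = \lambda_1 \|w\|_2^2 = \RRe\langle A_\beta w, w\rangle = \RRe\gota_\beta(w,w) \geq \gota_{\beta_0}(|w|,|w|) \geq \lambda_1 \||w|\|_2^2 = \lambda_1,
\]
so equality holds throughout. Equality $\gota_{\beta_0}(|w|,|w|) = \lambda_1\||w|\|_2^2$ forces $|w|$ to be a principal eigenfunction of the self-adjoint operator $A_{\beta_0}$ (minimiser of the Rayleigh quotient), hence $|w| = v$ after normalisation, in particular $\Tr|w| = \Tr v \geq \delta > 0$ on $\Gamma$, so $\Tr w \neq 0$ as an element of $L_2(\Gamma)$. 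On the other hand, equality in $\RRe\gota_\beta(w,w) = \gota_{\beta_0}(|w|,|w|)$ means the imaginary boundary term vanished not just after taking real parts but genuinely: writing $A_\beta w = \lambda_1 w$ and pairing against $w$, the imaginary part gives $0 = \IIm\gota_\beta(w,w) = \int_\Gamma (\IIm\beta)\,|\Tr w|^2 + (\text{contributions from the first-order terms})$. Here I must check that the bulk first-order part of $\IIm\gota_\beta(w,w)$ is forced to vanish as well: using that $w = e^{i\theta}v$ pointwise for some measurable phase $\theta$ (since $|w| = v$), and the equality case of the Kato/Ouhabaz inequality $|\nabla|w|| = |\nabla w|$, one deduces $\theta$ is (locally) constant, so $w$ is a constant unimodular multiple of $v$; then $\int_\Gamma (\IIm\beta)|\Tr w|^2 = \int_\Gamma (\IIm\beta)\, v^2$.

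The contradiction is then extracted as follows: if $w = c\,v$ with $|c| = 1$, then $A_\beta w = \lambda_1 w$ reads $A_\beta v = \lambda_1 v$, i.e. $\gota_\beta(v,\varphi) = \lambda_1\langle v,\varphi\rangle$ for all $\varphi \in H^1(\Omega)$. Subtracting the analogous identity $\gota_{\beta_0}(v,\varphi) = \lambda_1\langle v,\varphi\rangle$ (which holds since $v$ is the principal eigenfunction of $A_{\beta_0}$) gives $\int_\Gamma i\,(\IIm\beta)\,(\Tr v)\,\overline{\Tr\varphi} = 0$ for all $\varphi \in H^1(\Omega)$. Since the trace map $H^1(\Omega) \to L_2(\Gamma)$ has dense range (for a Lipschitz domain) and $\Tr v \geq \delta > 0$ everywhere on $\Gamma$, this forces $\IIm\beta = 0$ a.e. on $\Gamma$, contradicting the hypothesis $\IIm\beta \neq 0$. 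Hence strict inequality holds. \emph{The main obstacle} I anticipate is the equality-case analysis of the domination inequality — establishing that equality in $\RRe\gota_\beta(w,w) \geq \gota_{\beta_0}(|w|,|w|)$ together with $|w| = v$ forces $w$ to be a \emph{constant} unimodular multiple of $v$ (rather than merely $w = e^{i\theta(x)}v$ with variable phase); this is exactly where one needs the precise equality statement in the Beurling--Deny--Ouhabaz criterion, or alternatively a direct argument showing the phase is constant because $v > 0$ is connected and $w \in H^1$. An alternative, possibly cleaner route avoiding the phase analysis is to work directly with the sesquilinear identity: from $A_\beta v = \lambda_1 v$ being forced (via the eigenfunction $w$ and $|w|=v$, without committing to constancy of phase, by instead testing the eigenvalue equation for $w$ against $v$ itself and using $\langle A_\beta w, v\rangle$ together with the self-adjointness relation $\langle w, A_{\beta_0} v\rangle = \lambda_1\langle w,v\rangle$) one isolates $\int_\Gamma i(\IIm\beta)(\Tr w)(\Tr v) = 0$ and then varies the test function; I would present whichever of these makes the phase issue disappear most transparently.
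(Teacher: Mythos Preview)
Your argument contains a false step and an unnecessary detour that together obscure a much shorter proof. The false step: you invoke Proposition~\ref{pmax204} for the semigroup generated by $-A_\beta$, asserting it is positive and irreducible. For non-real $\beta$ this fails --- the semigroup does not even preserve real-valued functions, since the boundary term $\int_\Gamma \beta\,(\Tr u)\,\overline{\Tr v}$ is complex for real $u,v$. Consequently there is no positive principal eigenfunction of $A_\beta$ to speak of (and indeed you tacitly abandon the claim $w \gg 0$ the moment you start taking $|w|$). All you actually need is that $A_\beta$ has compact resolvent, so some eigenvalue $\mu$ attains the minimal real part with an eigenfunction~$w$.

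The detour through $|w|$ and the phase analysis --- which you yourself flag as the main obstacle --- is avoidable, and the paper does avoid it. You have already written down the exact identity $\RRe\gota_\beta(w,w) = \gota_{\RRe\beta}(w,w)$. Since $\gota_{\RRe\beta}$ is symmetric and $A_{\RRe\beta}$ self-adjoint, the variational inequality $\gota_{\RRe\beta}(w,w) \geq \lambda_1\|w\|_2^2$ holds for \emph{complex} $w \in H^1(\Omega)$, and equality already forces $w \in D(A_{\RRe\beta})$ with $A_{\RRe\beta}w = \lambda_1 w$ --- no $|w|$, no Kato inequality, no phase to chase. By simplicity of $\lambda_1$ (Proposition~\ref{pmax204}\ref{pmax204-4}, now applied legitimately to $A_{\RRe\beta}$), $w$ is automatically a scalar multiple of~$v$. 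From here your closing argument coincides with the paper's: the same $w$ lies in both $D(A_\beta)$ and $D(A_{\RRe\beta})$, so subtracting the two weak conormal boundary conditions $\partial_\nu w + \beta\,w|_\Gamma = 0$ and $\partial_\nu w + (\RRe\beta)\,w|_\Gamma = 0$ gives $(\IIm\beta)\,w|_\Gamma = 0$ in $L_2(\Gamma)$; but Theorem~\ref{tmax501} guarantees $w|_\Gamma$ is nowhere zero on $\Gamma$, forcing $\IIm\beta = 0$.
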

\begin{proof}
Let $\lambda \in \sigma(A_\beta)$.
There exists a $u \in D(A_\beta)$ such that $A_\beta u = \lambda \, u$
and $\|u\|_{L_2(\Omega)} = 1$.
Then 
\[
\RRe \lambda
= \RRe (A_\beta u, u)_{L_2(\Omega)}
= \RRe \gota_\beta(u)
= \gota_{\RRe \beta}(u)
\geq \min \sigma(A_{\RRe \beta})
,  \]
where we used that $\gota_{\RRe \beta}$ is symmetric.
If $\RRe \lambda = \min \sigma(A_{\RRe \beta})$, then 
$\gota_{\RRe \beta}(u) = \min \sigma(A_{\RRe \beta})$.
So $u \in D(A_{\RRe \beta})$ and $A_{\RRe \beta} u = \lambda_1 \, u$, 
where $\lambda_1 = \min \sigma(A_{\RRe \beta})$
and we used Proposition~\ref{pmax204}\ref{pmax204-4}.
Using Theorem~\ref{tmax501}, one deduces that $u \in C(\overline \Omega)$
and $u(x) \neq 0$
for all $x \in \Gamma$ (even for all $x \in \overline \Omega$).
Let $\partial_\nu u$ denote the (weak) co-normal derivative of $u$.
Then $\partial_\nu u + \beta \, u|_\Gamma = 0$ in $L_2(\Gamma)$ since $u \in D(A_\beta)$.
But also $\partial_\nu u + (\RRe \beta) \, u|_\Gamma = 0$ in $L_2(\Gamma)$ 
since $u \in D(A_{\RRe \beta})$.
So $(\IIm \beta) \, u|_\Gamma = 0$ in $L_2(\Gamma)$ and hence $\IIm \beta = 0$ almost everywhere.
This is a contradiction.
\end{proof}

\subsection{Mixed boundary conditions} \label{Smax4.3}

In this subsection we assume that $\Omega$ has Lipschitz boundary.
Further, let $D \subset \partial \Omega$ be a closed set and 
define $N = \partial \Omega \setminus D$.
We consider elliptic differential operators with mixed 
boundary conditions where, roughly speaking, we wish to have Dirichlet boundary conditions on $D$ and 
Neumann boundary conditions on $N$. This yields an example 
where we apply Theorem~\ref{tmax306} with a set $X$ such that 
$\Omega \subsetneqq X \subsetneqq \overline \Omega$.

In contrast to the previous sections, we restrict ourselves to differential operators
with second order coefficients only, i.e.\ we assume that $b_k = c_k = c_0 = 0$ 
for all $k \in \{1,\ldots,d\}$.

Since the pure Dirichlet and pure Neumann case have been considered in the 
previous subsections, we assume that $D \neq \emptyset$ and $N \neq \emptyset$.
Let $\partial D$ be the boundary of $D$ in the relative topology of $\partial \Omega$.
We need a technical assumption which states that 
the set of points from the Dirichlet boundary part is large enough with respect to the boundary
measure (see \cite{ERe2}).
Precisely, we suppose that there exists a $\delta > 0$ such that 
for all $x \in \partial D$ and $r \in (0,1]$ there exists a $y \in D \cap B(x,r)$
such that 
\begin{equation}
N \cap B(y, \delta \, r) = \emptyset.
\label{etmax420;1}
\end{equation}
Next we introduce the generator.
Let 
\[
C^\infty_D(\Omega) 
= \{ \chi|_\Omega : \chi \in C_c^\infty(\Ri^d) \mbox{ and } D \cap \supp \chi = \emptyset \} 
\]
and let $W^{1,2}_D(\Omega)$ be the closure of $C^\infty_D(\Omega)$ in $W^{1,2}(\Omega)$.
Define the form $\gota \colon W^{1,2}_D(\Omega) \times W^{1,2}_D(\Omega) \to \Ci$ by 
\[
\gota(u,v)
= \int_\Omega \sum_{k,l=1}^d a_{kl} \, (\partial_k u) \, \overline{\partial_l v} 
.  \]
Then $\gota$ is a closed sectorial form.

Let $A$ be the operator associated with $\gota$ 
on $L_2(\Omega)$ and let $S$ be the semigroup generated by $-A$ on $L_2(\Omega)$.
Finally let $C_D(\overline \Omega) = \{ u \in C(\overline \Omega) : u|_D = 0 \} $.
We shall first show that $S$ leaves the space $C_D(\overline \Omega)$ invariant and that 
the restriction to $C_D(\overline \Omega)$ is a $C_0$-semigroup.
Note that in Subsections~\ref{Smax5.2} and \ref{Smax5.1} we could quote the 
literature to have a $C_0$-semigroup on $C_0(\Omega)$ and $C(\overline \Omega)$.

\begin{thm} \label{tmax420}
Adopt the above notation and assumptions.
\begin{tabel} 
\item \label{tmax420-1}
The semigroup $S$ is positive and irreducible.
\item \label{tmax420-2}
If $t > 0$, then $S_t L_2(\Omega) \subset C_D(\overline \Omega)$.
In particular, the semigroup $S$ leaves $C_D(\overline \Omega)$ invariant.
\end{tabel}
For all $t > 0$ define 
$T_t = S_t|_{C_D(\overline \Omega)} \colon C_D(\overline \Omega) \to C_D(\overline \Omega)$.
\begin{tabel} 
\setcounter{teller}{2}
\item \label{tmax420-3}
The semigroup $T$ is a $C_0$-semigroup on $C_D(\overline \Omega)$.
\end{tabel}
\end{thm}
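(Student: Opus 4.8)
The plan is to treat the three assertions in order. Statements~\ref{tmax420-1} and~\ref{tmax420-2} are obtained, just as in the Dirichlet and Robin subsections, by combining standard form theory and the boundary regularity theorem of \cite{ERe2} with the abstract results of Section~\ref{Smax3}; only Statement~\ref{tmax420-3} requires an argument that cannot simply quote the literature. For Statement~\ref{tmax420-1}, positivity of $S$ follows from the Beurling--Deny--Ouhabaz criterion \cite[Theorem~2.6]{Ouh5}: one checks that $W^{1,2}_D(\Omega)$ is a sublattice of $W^{1,2}(\Omega)$ --- an element of $W^{1,2}_D(\Omega)$ being exactly a $W^{1,2}$-function whose quasi-continuous representative vanishes quasi-everywhere on $D$, a property obviously stable under $u\mapsto|u|$ --- and that $\RRe\gota(u^+,u^-)=0$, which holds because the $a_{kl}$ are real valued and $\nabla u^+$, $\nabla u^-$ have disjoint supports (there are no lower-order terms). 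Irreducibility of $S$ follows as in \cite{Ouh5} from the connectedness of $\Omega$: a closed $S$-invariant ideal corresponds to a measurable set $B\subset\Omega$ with $\one_B\,W^{1,2}_D(\Omega)\subset W^{1,2}_D(\Omega)$, forcing $B$ to be null or co-null. Since $W^{1,2}_D(\Omega)\hookrightarrow L_2(\Omega)$ is compact, $A$ has compact resolvent, so Proposition~\ref{pmax204} is also applicable.

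For Statement~\ref{tmax420-2} the essential input is the regularity theorem of \cite{ERe2}: under the geometric condition~\eqref{etmax420;1}, $S_t$ maps $L_2(\Omega)$ into $C(\overline\Omega)$ (in fact into a H\"older space) for every $t>0$. It then remains to identify the boundary values on $D$: since $S$ is holomorphic, $S_t u\in D(A)\subset W^{1,2}_D(\Omega)$, so the quasi-continuous version of $S_t u$ vanishes quasi-everywhere on $D$; because~\eqref{etmax420;1} prevents $D$ from being thin near any of its points, a continuous function that vanishes quasi-everywhere on $D$ must vanish on all of $D$. Hence $S_t u\in C_D(\overline\Omega)$, and as $\Omega$ is bounded we have $C_D(\overline\Omega)\subset L_2(\Omega)$, so $S$ leaves $C_D(\overline\Omega)$ invariant.

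Now I turn to Statement~\ref{tmax420-3}. By Statement~\ref{tmax420-2} the operators $T_t:=S_t|_{C_D(\overline\Omega)}$ map $C_D(\overline\Omega)$ into itself, and they are contractions there: the form $\gota$ has no zero-order part and is therefore a Dirichlet form (the Markov property on $W^{1,2}_D(\Omega)$ is checked exactly like the sublattice property above), so $S$ extends to a contraction semigroup on $L_\infty(\Omega)$, and for $u\in C_D(\overline\Omega)$ one has $\|u\|_{C(\overline\Omega)}=\|u\|_{L_\infty(\Omega)}$. The semigroup law passes from $S$ to $T$. For strong continuity at $0$ I would first record that $t\mapsto S_t u$ is continuous from $(0,\infty)$ into $C(\overline\Omega)$ for every $u\in L_2(\Omega)$ --- this follows by writing $S_t=S_{t/2}S_{t/2}$ and combining the $C_0$-property of $S$ on $L_2(\Omega)$ with the boundedness of $S_{t/2}\colon L_2(\Omega)\to C(\overline\Omega)$. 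Consequently, on the resolvent range $(\lambda I+A)^{-1}\bigl(C_D(\overline\Omega)\bigr)\subset C_D(\overline\Omega)$ the identity $u-S_t u=\int_0^t S_s(Au)\,ds$, read as a $C(\overline\Omega)$-valued Bochner integral whose integrand is continuous on $(0,\infty)$ and bounded by $\|Au\|_\infty$ (by $L_\infty$-contractivity), gives $\|T_t u-u\|_\infty\le t\,\|Au\|_\infty\to0$. Since $\sup_{0<t\le1}\|T_t\|\le1$, an $\varepsilon/3$-argument propagates $\lim_{t\downarrow0}T_t u=u$ to the closure of this resolvent range, so everything reduces to showing that $(\lambda I+A)^{-1}\bigl(C_D(\overline\Omega)\bigr)$, equivalently $\bigcup_{t>0}S_t\bigl(C_D(\overline\Omega)\bigr)$, is dense in $C_D(\overline\Omega)$.

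This density is the one genuinely non-formal point --- precisely the step that the literature supplies for free in the pure Dirichlet and pure Robin cases --- and I expect it to be the main obstacle. I would establish it (and the strong continuity along with it) from the kernel bounds for the mixed problem available through the methods of \cite{ERe2}: a Gaussian upper bound $0\le K_t(x,y)\le c_1 t^{-d/2}e^{-c_2|x-y|^2/t}$ on $\overline\Omega\times\overline\Omega$ together with control of the mass lost through the Dirichlet part, $1-\int_\Omega K_t(x,y)\,dy\le c_3 e^{-c_4\,\mathrm{dist}(x,D)^2/t}$ --- the latter estimate being exactly where~\eqref{etmax420;1} enters, since it forces the mixed kernel to behave near $D$ like a Dirichlet kernel. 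Given these, for $u\in C_D(\overline\Omega)$ one splits
$(S_t u)(x)-u(x)=\int_\Omega K_t(x,y)\bigl(u(y)-u(x)\bigr)\,dy-u(x)\bigl(1-\int_\Omega K_t(x,y)\,dy\bigr)$,
estimating the first term by the modulus of continuity of $u$ and the Gaussian tail, and the second by the mass-defect bound when $\mathrm{dist}(x,D)$ is large and by the smallness of $|u(x)|$ (recall $u|_D=0$) when $\mathrm{dist}(x,D)$ is small; letting $t\downarrow0$ yields $\|S_t u-u\|_\infty\to0$ uniformly in $u$ on bounded sets, hence both the density and the strong continuity. Once $T$ is known to be strongly continuous, it is a positive $C_0$-semigroup on $C_D(\overline\Omega)$, which is Statement~\ref{tmax420-3}.
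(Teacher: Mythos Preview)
Your treatment of Statements~\ref{tmax420-1} and~\ref{tmax420-2} is correct and parallel to the paper's; the paper argues the inclusion $C(\overline\Omega)\cap W^{1,2}_D(\Omega)\subset C_D(\overline\Omega)$ by an elementary continuity argument with the $\mathcal H^{d-1}$-a.e.\ vanishing of the trace on $D$ together with condition~\eqref{etmax420;1}, rather than via quasi-continuity and non-thinness, but both routes work.

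For Statement~\ref{tmax420-3} your overall scheme --- uniform $L_\infty$-bound on $T_t$, the estimate $\|(I-T_t)u\|_\infty\le M\,t\,\|Au\|_\infty$ on a dense subset, then an $\varepsilon/3$ argument --- coincides with the paper's. The gap is in how you produce that dense subset. You rely on the mass-defect estimate
\[
1-\int_\Omega K_t(x,y)\,dy\;\le\; c_3\,e^{-c_4\,\mathrm{dist}(x,D)^2/t},
\]
asserting it is ``available through the methods of \cite{ERe2}''. It is not: \cite{ERe2} supplies H\"older regularity and a Gaussian \emph{upper} bound on $K_t$ (Theorem~7.5 there), but an upper kernel bound gives no information about how close $\int_\Omega K_t(x,\cdot)$ is to~$1$. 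An estimate of the form you want requires a separate comparison argument, and the delicate points are $x\in N$: for interior $x$ one can dominate from below by a Dirichlet kernel on a small ball, but for $x$ on the Neumann part you would need a locally mass-conserving comparison semigroup that respects the reflecting condition, uniformly in~$x$. You have not supplied this, and condition~\eqref{etmax420;1} concerns only the geometry near $\partial D$, not near~$N$. Without the mass-defect bound your splitting of $(S_t u)(x)-u(x)$ does not give uniform convergence on $\overline\Omega$, and neither the density nor the strong continuity follows.

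The paper sidesteps this with an elliptic rather than parabolic density argument: it shows directly that the domain $D(A_{C_D})$ of the part of $A$ in $C_D(\overline\Omega)$ is dense. First, $\{\chi|_{\overline\Omega}:\chi\in C_c^\infty(\Ri^d),\ D\cap\supp\chi=\emptyset\}$ is dense in $C_D(\overline\Omega)$ by truncation and mollification. Second, each such $\chi$ is approximated in sup-norm by an element $u\in D(A_{C_D})$ obtained as the variational solution of $\gota(u,v)=(f,v)$ for a suitable $f\in C_c^\infty(\Omega)$; the error $\|u-\chi\|_{C(\overline\Omega)}$ is controlled by the divergence-form $L_p\!\to\! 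C(\overline\Omega)$ estimate of Proposition~\ref{pmax421}, which is precisely where \cite{ERe2} and condition~\eqref{etmax420;1} enter. Thus the paper replaces your unproved parabolic mass-defect bound by a proved elliptic regularity bound.
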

\begin{proof}
`\ref{tmax420-1}'. 
This follows from \cite[Corollary~4.3 and Theorem~4.5]{Ouh5}.

`\ref{tmax420-2}'. 
We first show that $C(\overline \Omega) \cap W^{1,2}_D(\Omega) \subset C_D(\overline \Omega)$.
Let $v \in C(\overline \Omega) \cap W^{1,2}_D(\Omega)$.
Since $(\Tr w)|_D = 0$ $\ch^{d-1}$-almost everywhere for all $w \in C^\infty_D(\Omega)$
it follows by density that 
$(\Tr w)|_D = 0$ $\ch^{d-1}$-almost everywhere for all functions $w \in W^{1,2}_D(\Omega)$.
In particular, $(\Tr v)|_D = 0$ $\ch^{d-1}$-almost everywhere.
Let $z \in D$. 
Suppose that $v(z) \neq 0$.
Since $v$ is continuous, there exists an $s \in (0,1)$ such that 
$v(x) \neq 0$ for all $x \in \overline \Omega \cap B(z,s)$.
If $z$ is in the interior of $D$ in the relative topology of $\partial \Omega$, 
then this contradicts $(\Tr v)|_D = 0$ $\ch^{d-1}$-almost everywhere.
Alternatively, if $z \in \partial D$, then (\ref{etmax420;1}) gives a contradiction.
So $v(z) = 0$.
Therefore $v \in C_D(\overline \Omega)$ and the inclusion
$C(\overline \Omega) \cap W^{1,2}_D(\Omega) \subset C_D(\overline \Omega)$ follows.

It is a consequence of \cite[Theorem~1.1]{ERe2} that $S$ maps into the 
(globally) H\"older continuous functions on $\overline \Omega$.
Let $t > 0$ and $u \in L_2(\Omega)$.
Then $S_t u \in C(\overline \Omega) \cap W^{1,2}_D(\Omega) \subset C_D(\overline \Omega)$
and Statement~\ref{tmax420-2} follows.

`\ref{tmax420-3}'. 
The proof is inspired by the proof of Lemma~4.2 in \cite{Nit4}.
Let $A_{C_D}$ be the part of $A$ in $C_D(\overline \Omega)$.
So
\[
D(A_{C_D}) = \{ u \in C_D(\overline \Omega) \cap D(A) : A u \in C_D(\overline \Omega) \}
\]
and $A_{C_D} u = A u$ for all $u \in D(A_{C_D})$.

First we shall show that $D(A_{C_D})$ is dense in $C_D(\overline \Omega)$.
We shall do this in two steps.
If $u \in C_D(\overline \Omega)^+$ and $\varepsilon > 0$, then 
$D \cap \supp ((u - \varepsilon)^+) = \emptyset$. 
Regularising $(u - \varepsilon)^+$ it follows that 
$C_D(\overline \Omega)^+$ is contained in the $C_D(\overline \Omega)$-closure of 
$ \{ \chi|_{\overline \Omega} : 
      \chi \in C_c^\infty(\Ri^d) \mbox{ and } D \cap \supp \chi = \emptyset \} $.
Hence by linearity 
$ \{ \chi|_{\overline \Omega} : 
      \chi \in C_c^\infty(\Ri^d) \mbox{ and } D \cap \supp \chi = \emptyset \} $
is dense in $C_D(\overline \Omega)$.

By Proposition~\ref{pmax421}\ref{pmax421-2} below
there exists a $c > 0$ such that $u \in C(\overline \Omega)$ and 
$\|u\|_{C(\overline \Omega)} \leq c \sum_{k=1}^d \|f_k\|_{L_{d+1}(\Omega)}$
for all $u \in W^{1,2}_D(\Omega)$ and $f_1,\ldots,f_d \in L_{d+1}(\Omega)$ 
such that
\[
\gota(u,v) = \sum_{k=1}^d (f_k, \partial_k v)_{L_2(\Omega)}
\]
for all $v \in W^{1,2}_D(\Omega)$.
Now let $\chi \in C_c^\infty(\Ri^d)$ and suppose that $D \cap \supp \chi = \emptyset$.
Let $\varepsilon > 0$.
Since $C_c^\infty(\Omega)$ is dense in $L_{d+1}(\Omega)$, 
for all $k \in \{ 1,\ldots,d \} $ there exists a $w_k \in C_c^\infty(\Omega)$ such that 
\[
\left\|w_k - \sum_{l=1}^d a_{lk} \, \partial_l \chi\right\|_{L_{d+1}(\Omega)} < \varepsilon.
\]
Define $f = - \sum_{k=1}^d \partial_k w_k \in C_c^\infty(\Omega)$.
There exists a unique $u \in W^{1,2}_D(\Omega)$ such that 
$\gota(u,v) = (f, v)_{L_2(\Omega)}$ for all $v \in W^{1,2}_D(\Omega)$.
Then $u \in C(\overline \Omega)$ by Proposition~\ref{pmax421}\ref{pmax421-1} below.
So $u \in C(\overline \Omega) \cap W^{1,2}_D(\Omega) \subset C_D(\overline \Omega)$
by the first step in the proof of Statement~\ref{tmax420-2}.
Clearly $u \in D(A)$ and $A u = f$.
Obviously $f \in C_D(\overline \Omega)$.
Hence $u \in D(A_{C_D})$.
Moreover, if $v \in W^{1,2}_D(\Omega)$, then 
\begin{eqnarray*}
\gota(u - \chi|_\Omega, v)
& = & \sum_{k,l=1}^d \int_\Omega a_{kl} \, (\partial_k u - \partial_k \chi) \, \overline{\partial_l v}  \\
& = & (f,v)_{L_2(\Omega)} 
     - \sum_{k,l=1}^d \int_\Omega a_{kl} \, (\partial_k \chi) \, \overline{\partial_l v}  \\
& = & - \sum_{k=1}^d (\partial_k w_k, v)_{L_2(\Omega)}
    - \sum_{k=1}^d \sum_{l=1}^d (a_{lk} \, \partial_l \chi, \partial_k v)_{L_2(\Omega)}  \\
& = & \sum_{k=1}^d (w_k - \sum_{l=1}^d a_{lk} \, \partial_l \chi, \partial_k v)_{L_2(\Omega)} 
.  
\end{eqnarray*}
So $u - \chi|_{\overline \Omega} \in C(\overline \Omega)$ and 
\[
\|u - \chi|_{\overline \Omega}\|_{C(\overline \Omega)} 
   \leq c \sum_{k=1}^d \Big\|w_k - \sum_{l=1}^d a_{lk} \, \partial_l \chi\Big\|_{L_{d+1}(\Omega)}
   \leq c \, d \, \varepsilon.
\]
We showed that $\chi|_{\overline \Omega}$ belongs to the closure of $D(A_{C_D})$
in $C_D(\overline \Omega)$.
Hence $D(A_{C_D})$ is dense in $C_D(\overline \Omega)$.

Now we are able to complete the proof of Statement~\ref{tmax420-3}.
By \cite[Theorem~7]{ERe2}.5 the semigroup $S$ has Gaussian kernel bounds.
Hence there exists an $M > 0$ such that $\|S_t\|_{\infty \to \infty} \leq M$
for all $t \in (0,1]$.
Then $\|T_t\|_{\infty \to \infty} \leq M$ for all $t \in (0,1]$.
If $u \in D(A_{C_D})$, then 
\[
\|(I - T_t)u\|_{C_D(\overline \Omega)}
\leq \int_0^t \|S_s A_{C_D} u\|_\infty
\leq M \, t \, \|A_{C_D} u\|_\infty
\]
for all $t \in (0,1]$.
Hence we have $\lim_{t \downarrow 0} T_t u = u$ in $C_D(\overline \Omega)$.
Since $D(A_{C_D})$ is dense in $C_D(\overline \Omega)$ the semigroup $T$ is a 
$C_0$-semigroup.
\end{proof}

In the proof Theorem~\ref{tmax420} we needed the following regularity results of \cite{ERe2}.

\begin{prop} \label{pmax421}
Let $p \in (d,\infty)$.
\begin{tabel}
\item \label{pmax421-1}
If $u \in W^{1,2}_D(\Omega)$ and $f \in L_p(\Omega)$ with 
$\gota(u,v) = (f,v)_{L_2(\Omega)}$
for all $v \in W^{1,2}_D(\Omega)$, then $u \in C(\overline \Omega)$.
\item \label{pmax421-2}
There exists a constant $c > 0$ such that 
$\|u\|_{C(\overline \Omega)} \leq c \sum_{k=1}^d \|f_k\|_{L_p(\Omega)}$
for all $u \in W^{1,2}_D(\Omega)$ and $f_1,\ldots,f_d \in L_p(\Omega)$ 
such that $\gota(u,v) = \sum_{k=1}^d (f_k, \partial_k v)_{L_2(\Omega)}$
for all $v \in W^{1,2}_D(\Omega)$.
\end{tabel}
\end{prop}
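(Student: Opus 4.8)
The plan is to deduce both statements from the De~Giorgi--Nash--Moser regularity theory for elliptic operators in divergence form with mixed boundary conditions, as developed in \cite{ERe2}. The starting observation is that the technical condition~(\ref{etmax420;1}) on the geometry of the Dirichlet part $D$ is exactly the hypothesis under which the results of \cite{ERe2} guarantee global H\"older continuity up to the boundary: there is an exponent $\gamma \in (0,1)$ and, for each $p > d$, a constant $c > 0$ such that every weak solution $u \in W^{1,2}_D(\Omega)$ of $\gota(u,v) = \sum_{k=1}^d (f_k, \partial_k v)_{L_2(\Omega)}$ with $f_1,\ldots,f_d \in L_p(\Omega)$ belongs to $C^{\gamma}(\overline \Omega)$ together with the a priori bound $\|u\|_{C^{\gamma}(\overline \Omega)} \leq c \sum_{k=1}^d \|f_k\|_{L_p(\Omega)}$. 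Once this is in hand, Statement~\ref{pmax421-2} is immediate, since $C^{\gamma}(\overline \Omega) \hookrightarrow C(\overline \Omega)$ with norm $1$ on a bounded set (indeed $\|u\|_{C(\overline \Omega)} \leq \|u\|_{C^{\gamma}(\overline \Omega)}$), and Statement~\ref{pmax421-1} is the special case obtained by writing the right-hand side in divergence form.

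In more detail, for Statement~\ref{pmax421-1} I would argue as follows. Given $f \in L_p(\Omega)$ with $p > d$, solve the auxiliary Poisson-type problem: since $p > d \geq \tfrac{2d}{d+2}$, the functional $v \mapsto (f,v)_{L_2(\Omega)}$ is bounded on $W^{1,2}_D(\Omega)$, and one can represent it in the form $\sum_{k=1}^d (f_k,\partial_k v)_{L_2(\Omega)}$ with suitable $f_k \in L_q(\Omega)$; the gain of one derivative from $L_p$ to a first-order term lands in $L_q$ for some $q > d$ by Sobolev embedding (explicitly, one may take $g_k$ with $\partial_k g_k$ contributions handled via the Newtonian potential of $f$, whose gradient lies in $W^{1,p} \hookrightarrow L_\infty$ when $p > d$). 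Then Statement~\ref{pmax421-1} reduces to Statement~\ref{pmax421-2}. Alternatively, and more cleanly, I would simply invoke the local and boundary De~Giorgi--Nash--Moser estimates from \cite{ERe2} directly for the equation with an $L_p$ right-hand side: these give $u \in C^{\gamma}(\overline \Omega)$ under precisely the hypothesis~(\ref{etmax420;1}), which is what is needed.

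The substantive content is entirely borrowed from \cite{ERe2}; the only thing to check carefully is that the function space $W^{1,2}_D(\Omega)$ defined here (the $W^{1,2}$-closure of $C^\infty_D(\Omega)$) coincides with the space used in \cite{ERe2}, and that hypothesis~(\ref{etmax420;1}) matches the geometric condition imposed there (sometimes phrased as an "interior corkscrew" or measure-theoretic thickness condition on $D$ near $\partial D$). The main obstacle, such as it is, is bookkeeping: ensuring the exponent $p > d$ here is compatible with the exponent range in the cited theorem, and translating between the "$f \in L_p$" and "$f = \operatorname{div}(f_1,\ldots,f_d)$, $f_k \in L_p$" formulations so that both \ref{pmax421-1} and \ref{pmax421-2} follow from a single application. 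I would phrase the proof as: "Both statements are contained in \cite[Theorem~\ldots]{ERe2}; condition~(\ref{etmax420;1}) guarantees the hypotheses are met, and $\|\cdot\|_{C(\overline\Omega)} \le \|\cdot\|_{C^\gamma(\overline\Omega)}$ gives the stated bound," expanding the translation between the two right-hand-side forms in one or two sentences.
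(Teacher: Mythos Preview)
Your approach is essentially the same as the paper's: both reduce the proposition to the H\"older regularity results of \cite{ERe2} (the paper cites Theorem~6.8 there specifically), with condition~(\ref{etmax420;1}) ensuring the geometric hypotheses are met. The only detail the paper adds that you omit is the remark that, since $\emptyset \neq D \neq \partial\Omega$, the form $\gota$ is coercive on $W^{1,2}_D(\Omega)$, so the identity operator appearing in \cite[Theorem~6.8]{ERe2} is not needed; this is worth recording, but otherwise your plan matches.
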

\begin{proof}
This follows as in the proof of Theorem~6.8 in \cite{ERe2}.
Since $\emptyset \neq D \neq \partial \Omega$, the form $\gota$ is coercive.
Hence the identity operator in \cite[Theorem~6.8]{ERe2} is not needed.
\end{proof}

Similar to the case of Dirichlet and Robin boundary conditions, we obtain irreducibility of the
semigroup on the space $C_D(\overline{\Omega})$.

\begin{thm} \label{tmax422}
The operator $A$ on $L_2(\Omega)$ and the semigroup $T$ on $C_D(\overline{\Omega})$
have the following properties.
\begin{tabel} 
\item \label{tmax422-1}
For all $t > 0$ the operator $T_t$ is positivity improving.
In particular, the semigroup $T$ is irreducible.
\item \label{tmax422-2}
Let $u$ be the principal eigenvector of $A$. 
Then $u(x) > 0$ for all $x \in \Omega \cup N$.
\end{tabel}
\end{thm}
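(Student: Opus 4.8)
The plan is to apply Theorem~\ref{tmax306} with the choice $X = \Omega \cup N$. First I would check that this is a legitimate choice: since $D \subset \partial\Omega$ is closed and $N = \partial\Omega \setminus D$, the set $N$ is relatively open in $\partial\Omega$, and one has $\Omega \subset \Omega \cup N \subset \overline\Omega$, so the hypothesis $\Omega \subset X \subset \overline\Omega$ of Theorem~\ref{tmax306} is met. The semigroup $S$ on $L_2(\Omega)$ is positive, irreducible and holomorphic by Theorem~\ref{tmax420}\ref{tmax420-1} (holomorphy comes from $S$ being generated by minus an m-sectorial operator associated with a closed sectorial form); thus the standing hypotheses of Theorem~\ref{tmax306} hold with $p = 2$.

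Next I would verify the two conditions \ref{tmax306-c1} and \ref{tmax306-c2} of Theorem~\ref{tmax306} for every $x \in \Omega \cup N$. For \ref{tmax306-c1}, by Theorem~\ref{tmax420}\ref{tmax420-2} we have $S_t L_2(\Omega) \subset C_D(\overline\Omega) \subset C(\overline\Omega)$, and restricting continuous functions on $\overline\Omega$ to $X = \Omega \cup N$ gives $S_t L_2(\Omega) \subset C(\Omega \cup N)$ for all $t > 0$. For \ref{tmax306-c2}, I need, for each $x \in \Omega \cup N$, some $t > 0$ and $w \in L_2(\Omega)$ with $(S_t w)(x) \neq 0$. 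This is the step I expect to be the main obstacle, because unlike the Dirichlet and Robin cases there is no off-the-shelf statement that $(S_t|_{C_D(\overline\Omega)})_{t>0}$ is a $C_0$-semigroup giving the nonvanishing for free --- although in fact Theorem~\ref{tmax420}\ref{tmax420-3} does supply exactly this. Indeed, since $T$ is a $C_0$-semigroup on $C_D(\overline\Omega)$, for any $x \in \Omega \cup N$ pick $f \in C_D(\overline\Omega)$ with $f(x) \neq 0$ (possible because $x \notin D$ and $C_D(\overline\Omega)$ separates points of $\overline\Omega \setminus D$ from the rest); then $T_t f \to f$ in $C_D(\overline\Omega)$, hence uniformly, as $t \downarrow 0$, so $(T_t f)(x) \to f(x) \neq 0$ and therefore $(S_t w)(x) = (T_t f)(x) \neq 0$ for all sufficiently small $t > 0$, with $w = f \in C_D(\overline\Omega) \subset L_2(\Omega)$.

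With both conditions in hand, Theorem~\ref{tmax306} yields $(S_t u)(x) > 0$ for every $t > 0$, every $x \in \Omega \cup N$, and every $u \in L_2(\Omega)$ with $u \geq 0$ and $u \neq 0$. This gives Statement~\ref{tmax422-1}: for $t > 0$ and $0 \lneq u \in C_D(\overline\Omega)$ one has $(T_t u)(x) = (S_t u)(x) > 0$ for all $x \in \Omega \cup N$, while $(T_t u)(x) = 0$ for $x \in D$ since $T_t u \in C_D(\overline\Omega)$; thus $T_t u \gg 0$ in $C_D(\overline\Omega)$ by the characterisation of closed ideals in $C_D(\overline\Omega)$ (the closed ideals correspond to closed subsets of $\overline\Omega$ containing $D$, and $T_t u$ vanishes on no larger set than $D$). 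Hence each $T_t$ is positivity improving, so $T$ leaves no nontrivial closed ideal invariant and is irreducible. Finally, for Statement~\ref{tmax422-2}, the operator $-A$ has compact resolvent (the embedding $W^{1,2}_D(\Omega) \hookrightarrow L_2(\Omega)$ is compact since $\Omega$ is bounded Lipschitz), and $S$ is positive and irreducible, so Proposition~\ref{pmax204}\ref{pmax204-3} gives a principal eigenvector $u$ with $u \gg 0$ in $L_2(\Omega)$, i.e.\ $u > 0$; applying the positivity improving property just established to $A u = \lambda_1 u$, rewritten via the resolvent as $u = (\lambda_1 - \mu + A)(-\mu I + A)^{-1}u$ with $-\mu$ large, or more directly via $S_t u = e^{-\lambda_1 t} u$, we get $u(x) = e^{\lambda_1 t}(S_t u)(x) > 0$ for all $x \in \Omega \cup N$, which is the claim.
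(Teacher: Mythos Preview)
Your proposal is correct and follows essentially the same route as the paper: apply Theorem~\ref{tmax306} with $p=2$ and $X=\Omega\cup N$, verifying condition~\ref{tmax306-c1} via Theorem~\ref{tmax420}\ref{tmax420-2} and condition~\ref{tmax306-c2} via the $C_0$-property in Theorem~\ref{tmax420}\ref{tmax420-3}, then deduce~\ref{tmax422-2} from~\ref{tmax422-1}. Your write-up just fills in more of the details (the explicit choice of $f\in C_D(\overline\Omega)$ with $f(x)\neq 0$, and the eigenvector identity $u=e^{\lambda_1 t}S_t u$) that the paper leaves implicit.
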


\begin{proof}
`\ref{tmax422-1}'. 
Choose $p = 2$.
Let $X = \Omega \cup N$.
Then $\Omega \subset X \subset \overline \Omega$.
It follows from Theorem~\ref{tmax420}\ref{tmax420-2} that 
Condition~\ref{tmax306-c1} in Theorem~\ref{tmax306} is valid and 
Condition~\ref{tmax306-c2} follows from Theorem~\ref{tmax420}\ref{tmax420-3}
for every $x \in X$.
Hence Theorem~\ref{tmax306} implies that $(S_t u)(x) > 0$ 
for all $x \in X$, $t > 0$ and $u \in L_2(\Omega)$ 
with $u \geq 0$ and $u \neq 0$.
So $T$ is positivity improving and consequently irreducible.

`\ref{tmax422-2}'. 
This follows immediately from the proof of Statement~\ref{tmax422-1}.
\end{proof}

\begin{remark} \label{rmax422.5}
Note that $C_D(\overline{\Omega}) = C_0(\Omega \cup N)$,
the closure of $C_c(\Omega \cup N)$ in $C(\overline \Omega)$.
It follows that $u \in C_0(\Omega \cup N)$ is a quasi-interior point 
if and only if $u(x) > 0$ for all
$x \in \Omega \cup N$. 
\end{remark}

By \cite[Theorem~7.5]{ERe2} the semigroup $S$ has Gaussian kernel bounds.
Hence the semigroup extends consistently to $L_p(\Omega)$ for all $p \in [1,\infty)$.
We denote the generator by~$- A_p$.
If $p \in (d/2, \infty)$, then a Laplace transform gives that the 
resolvent of $A_p$ maps $L_p(\Omega)$ into $C_D(\overline \Omega)$.
By the same arguments as in the proof of Corollary~\ref{cmax503} we 
obtain the following consequence of Theorem~\ref{tmax422}.

\begin{cor} \label{cmax5041}
Let $\lambda \in \Ri$ be smaller than the first eigenvalue of $A$ and let $p \in (d/2,\infty)$. 
If $u \in D(A_p)$ and $(-\lambda \, I + A_p)u > 0$, then $u \in C_D(\overline{\Omega})$ and $u(x) > 0$ for all $x \in \Omega \cup N$.
\end{cor}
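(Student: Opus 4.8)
The plan is to follow exactly the structure of the proof of Corollary~\ref{cmax503}, only replacing the role of $C_0(\Omega)$ there by $C_D(\overline{\Omega})$ here and invoking Theorem~\ref{tmax422} in place of Theorem~\ref{tmax502}. First I would denote by $-A_c$ the generator of the $C_0$-semigroup $T$ on $C_D(\overline{\Omega})$ (which exists by Theorem~\ref{tmax420}\ref{tmax420-3}) and pick a real number $\mu$ with $\mu < \lambda$ and $\mu < \inf\{\RRe\nu : \nu \in \sigma(A_c)\}$. Since $T$ is irreducible by Theorem~\ref{tmax422}\ref{tmax422-1}, the resolvent $(-\mu\,I + A_c)^{-1}$ is positivity improving on $C_D(\overline{\Omega})$ by \cite[Definition~C-III.3.1]{Nag} (here "positivity improving" is with respect to the quasi-interior points of $C_D(\overline{\Omega})$, which by Remark~\ref{rmax422.5} are exactly the functions strictly positive on $\Omega \cup N$).

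Next I would record the two compatibility facts used to make the resolvent identity meaningful: that the resolvents $(-\lambda\,I+A_p)^{-1}$ and $(-\mu\,I+A_p)^{-1}$ both map $L_p(\Omega)$ into $C_D(\overline{\Omega})$ (this is the Laplace-transform argument indicated just before the corollary, using $p > d/2$ together with Theorem~\ref{tmax420}\ref{tmax420-2} and the Gaussian bounds of \cite[Theorem~7.5]{ERe2}), and that on $C_D(\overline{\Omega})$ the operator $(-\mu\,I+A_p)^{-1}$ agrees with $(-\mu\,I+A_c)^{-1}$ since $A_c$ is by construction the part of $A_p$ in $C_D(\overline{\Omega})$. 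Then, writing $f = (-\lambda\,I + A_p)u$ (note $f \ge 0$, $f \ne 0$ by hypothesis) and applying the resolvent identity,
\begin{eqnarray*}
u = (-\lambda \, I + A_p)^{-1}f
& = & (\lambda-\mu)(-\mu \, I+A_p)^{-1} (-\lambda \, I+A_p)^{-1}f + (-\mu \, I+A_p)^{-1}f  \\
& \geq & (\lambda-\mu)(-\mu \, I+A_p)^{-1} (-\lambda \, I+A_p)^{-1}f  \\
& = & (\lambda-\mu)(-\mu \, I+A_c)^{-1} (-\lambda \, I+A_p)^{-1}f
\gg 0,
\end{eqnarray*}
where the inequality uses $\lambda > \mu$, positivity of the resolvent $(-\mu\,I+A_p)^{-1}$, and $(-\lambda\,I+A_p)^{-1}f \ge 0$; and the final $\gg 0$ (in $C_D(\overline{\Omega})$) holds because $(-\lambda\,I+A_p)^{-1}f$ is a nonzero positive element of $C_D(\overline{\Omega})$ to which the positivity improving resolvent $(-\mu\,I+A_c)^{-1}$ is applied. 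By Remark~\ref{rmax422.5}, $u \gg 0$ in $C_D(\overline{\Omega})$ means precisely $u \in C_D(\overline{\Omega})$ and $u(x) > 0$ for all $x \in \Omega \cup N$, which is the assertion.

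The only genuinely delicate point — and hence the step I would write out most carefully — is verifying that $(-\lambda\,I+A_p)^{-1}f$ is a nonzero positive element \emph{of $C_D(\overline{\Omega})$} (not just of $L_p(\Omega)$), so that the irreducible resolvent $(-\mu\,I+A_c)^{-1}$ may legitimately be applied to it; this is where one needs that $\lambda$ lies below the first eigenvalue of $A$ (so that the resolvent at $\lambda$ is positive, by the same spectral-gap/positivity argument as for $\mu$), combined with the mapping property of the resolvent into $C_D(\overline{\Omega})$. Everything else is the verbatim argument of Corollary~\ref{cmax503}, so I would simply remark that the proof proceeds "by exactly the same arguments as in the proof of Corollary~\ref{cmax503}."
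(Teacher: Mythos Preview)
Your proposal is correct and is exactly the approach the paper takes: the paper does not give a separate proof of Corollary~\ref{cmax5041} but simply states that it follows ``by exactly the same arguments as in the proof of Corollary~\ref{cmax503}'' as a consequence of Theorem~\ref{tmax422}. Your write-up faithfully reproduces that argument with $C_D(\overline{\Omega})$ in place of $C_0(\Omega)$, and your care in flagging the point that $(-\lambda\,I+A_p)^{-1}f$ must be a nonzero positive element of $C_D(\overline{\Omega})$ (so that the positivity improving resolvent $(-\mu\,I+A_c)^{-1}$ can be applied) is appropriate and correctly justified.
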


\section{The strong maximum principle for parabolic equations} \label{Smax5}

In this section we show how our results, in particular Corollary~\ref{cmax302}
and Theorem~\ref{tmax502}, can be employed to prove strong minimum and 
maximum principles for parabolic and elliptic differential operators.
Throughout this section let $\Omega \subset \Ri^d$ be a bounded non-empty open set
with boundary~$\Gamma$.
For all $k,l \in \{ 1,\ldots,d \} $ let $a_{kl}, b_k, c_k, c_0 \in L_\infty(\Omega,\Ri)$.
We assume that there exists a $\mu > 0$ such that 
\[
\RRe \sum_{k,l=1}^d a_{kl}(x) \, \xi_k \, \overline{\xi_l}
\geq \mu \, |\xi|^2
\]
for all $\xi \in \Ci^d$ and almost every $x \in \Omega$.
Define $\ca \colon H^1_\loc(\Omega) \to \cd'(\Omega)$ by 
\[
\langle \ca u,v \rangle_{\cd'(\Omega) \times \cd(\Omega)}
= \sum_{k,l=1}^d \int_\Omega a_{kl} \, (\partial_k u) \, \overline{\partial_l v}
   + \sum_{k=1}^d \int_\Omega b_k \, u \, \overline{\partial_k v}
   + \sum_{k=1}^d \int_\Omega c_k \, (\partial_k u) \, \overline v
   + \int_\Omega c_0 \, u \, \overline v
\]
for all $u \in H^1_\loc(\Omega)$ and $v \in C_c^\infty(\Omega)$.
Define the operator $A_{c,\max}$ in $C(\overline \Omega)$ by
\[
D(A_{c,\max})
= \{ u \in H^1_\loc(\Omega) \cap C(\overline \Omega) : \ca u \in C(\overline \Omega) \}
\]
and $A_{c,\max} = \ca|_{D(A_{c,\max})}$.
In this section we shall prove a maximum principle for parabolic equations
involving the operator $A_{c,\max}$.

The maximum and minimum principles in this section are not completely novel.
For operators in non-divergence form they are classical.
For operators in divergence form as we consider them here, 
there are results in \cite[Theorem~6.25]{Liebe3}, with a slightly different notion 
of solution and domain of the operator.
Still, we find it worthwhile to include this section since it shows that our 
approach from the previous sections yields a new short and elementary 
proof for strong parabolic and elliptic maximum principles under very general assumptions on the coefficients of the operator.

\subsection{The strong maximum principle for mild solutions} \label{Smax5.3}

In this subsection, we assume in addition that $\Omega$ is connected and Wiener regular
(see the beginning of Subsection~\ref{Smax5.2} for a definition). 
Moreover, we assume that the coefficients satisfy
\[
\int_\Omega c_0 \, v + \sum_{k=1}^d \int_\Omega b_k \, \partial_k v
\geq 0
\]
for all $v \in C_c^\infty(\Omega)^+$.
Fix $T \in (0,\infty)$.
Let $\varphi \in C([0,T],C(\Gamma))$ and $u_0 \in C(\overline \Omega)$.
Formally we consider the problem
\begin{equation}
\left[ \begin{array}{ll}
\dot{u}(t) = - A_{c,\max} u(t) \quad & \mbox{for all } t \in [0,T],  \\[5pt]
u(t)|_\Gamma = \varphi(t) &  \mbox{for all } t \in [0,T],  \\[5pt]
u(0) = u_0.
        \end{array} \right.
\label{etmax503;1}
\end{equation}
As in \cite{Are9} we say that $u \in C([0,T],C(\overline \Omega))$ is a 
{\bf mild solution of (\ref{etmax503;1})} if 
\[
\int_0^t u(s) \, ds \in D(A_{c,\max})
\quad , \quad
u(t) = u_0 - A_{c,\max} \int_0^t u(s) \, ds
\quad \mbox{and} \quad
u(t)|_\Gamma = \varphi(t)
\]
for all $t \in [0,T]$.
Arendt \cite[Theorem~6.5]{Are9} proved the following theorem.

\begin{thm} \label{tmax503}
Let $\varphi \in C([0,T],C(\Gamma))$ and $u_0 \in C(\overline \Omega)$
with $u_0|_\Gamma = \varphi(0)$.
Then there exists a unique function $u \in C([0,T],C(\overline \Omega))$
such that $u$ is a mild solution of (\ref{etmax503;1}).
Moreover, if $\varphi \geq 0$ and $u_0 \geq 0$, then $u \geq 0$.
\end{thm}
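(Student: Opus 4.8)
The plan is to reduce the inhomogeneous boundary value problem \eqref{etmax503;1} to two ingredients: an elliptic solvability result (the Dirichlet problem with continuous data is solvable because $\Omega$ is Wiener regular and $A_{c,\max}$ satisfies a weak maximum principle), and the $C_0$-semigroup $T$ on $C_0(\Omega)$ generated by the part of $A_{c,\max}$ with Dirichlet boundary conditions, whose existence was recorded in Subsection~\ref{Smax5.2}. First I would treat uniqueness: if $u_1,u_2$ are two mild solutions with the same data, then $w = u_1 - u_2 \in C([0,T],C(\overline\Omega))$ satisfies the homogeneous version of \eqref{etmax503;1}, i.e.\ $w(0)=0$, $w(t)|_\Gamma = 0$, and $\int_0^t w(s)\,ds \in D(A_{c,\max})$ with $w(t) = -A_{c,\max}\int_0^t w(s)\,ds$. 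Since $w(t)|_\Gamma=0$, the integral actually lies in the domain of the Dirichlet realisation, so $w$ is a mild solution of the homogeneous Cauchy problem for $-A_c$ on $C_0(\Omega)$ (or on $L_2(\Omega)$), and by uniqueness of mild solutions for a $C_0$-semigroup we get $w=0$.

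For existence I would use the standard Duhamel-type construction. Let $h(t) \in C(\overline\Omega)$ be the harmonic-type lifting solving $A_{c,\max} h(t) = 0$ with $h(t)|_\Gamma = \varphi(t)$ (solvable by Wiener regularity together with the weak maximum principle, which also makes $t \mapsto h(t)$ continuous into $C(\overline\Omega)$ since $\varphi \in C([0,T],C(\Gamma))$ and the solution operator is a bounded map $C(\Gamma)\to C(\overline\Omega)$). Then set $u(t) = h(t) + S_t\big(u_0 - h(0)\big) + \big(\text{correction term involving }\dot h\big)$; more precisely, I would look for $u = h + v$ where $v \in C([0,T],C_0(\Omega))$ solves the Dirichlet-semigroup Cauchy problem with inhomogeneity coming from $h$. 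One must be a little careful because $h$ need not be differentiable in $t$, so rather than differentiate I would verify the integrated mild-solution identity directly: integrate, use $\int_0^t h(s)\,ds \in D(A_{c,\max})$ with $A_{c,\max}\int_0^t h(s)\,ds = 0$ (since each $h(s)$ is a solution), and reduce everything to the mild-solution identity for the $C_0$-semigroup $T$ on $C_0(\Omega)$, which holds by definition of the generator. This yields a mild solution $u$; the boundary condition $u(t)|_\Gamma = h(t)|_\Gamma = \varphi(t)$ and the initial condition $u(0) = h(0) + (u_0 - h(0)) = u_0$ are then immediate.

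Finally, positivity. Suppose $\varphi \ge 0$ and $u_0 \ge 0$. By the weak maximum principle for $A_{c,\max}$ (which holds under the stated sign condition $\int_\Omega c_0 v + \sum_k \int_\Omega b_k\,\partial_k v \ge 0$ for $v\in C_c^\infty(\Omega)^+$), the lifting satisfies $h(t) \ge 0$ for every $t$. The Dirichlet semigroup $S = (S_t)$ on $L_2(\Omega)$ is positive (it is the one from Subsection~\ref{Smax5.2}), hence $T = S|_{C_0(\Omega)}$ is positive; so the homogeneous part is nonnegative. The Duhamel convolution of the positive semigroup against a nonnegative inhomogeneity built from $h \ge 0$ is again nonnegative. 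Assembling these, $u(t) \ge 0$ for all $t \in [0,T]$.

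I expect the main obstacle to be the existence argument, specifically handling the possible non-differentiability of $t \mapsto h(t)$: one cannot simply write $\dot h$ and plug into a variation-of-constants formula. The clean way around this is to work exclusively with the \emph{integrated} identity defining mild solutions, never differentiating $h$, and to exploit that $A_{c,\max}$ annihilates $\int_0^t h(s)\,ds$. The rest — uniqueness, the boundary and initial conditions, and positivity — then follows routinely from properties of the $C_0$-semigroup $T$ on $C_0(\Omega)$ established in Subsection~\ref{Smax5.2} and the weak maximum principle for $A_{c,\max}$.
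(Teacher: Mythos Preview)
The paper does not give a proof of this theorem; it simply cites \cite[Theorem~6.5]{Are9}. The argument there proceeds via the theory of resolvent positive operators: one shows that $-A_{c,\max}$ is resolvent positive on $C(\overline\Omega)$ (this uses Wiener regularity and the weak maximum principle), and then invokes an abstract Da~Prato--Sinestrari type theorem which yields existence, uniqueness and positivity of mild solutions of the Cauchy problem for a resolvent positive operator, even when that operator does not generate a semigroup.

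Your lifting-plus-semigroup strategy is natural, but there is a genuine gap at exactly the point you flag. Writing $u = h + v$ with $A_{c,\max}\int_0^t h(s)\,ds = 0$, the mild-solution identity reduces to
\[
v(t) \;=\; \bigl(u_0 - h(t)\bigr) - A_c \int_0^t v(s)\,ds,
\]
which is \emph{not} the integrated form of a mild solution for $-A_c$, because the term $u_0 - h(t)$ depends on~$t$. Equivalently, $W(t)=\int_0^t v(s)\,ds$ must be a \emph{classical} solution of $W' + A_c W = u_0 - h(\cdot)$ with $W(0)=0$; for a forcing that is merely continuous this need not hold even for holomorphic semigroups, so ``working with the integrated identity'' does not by itself produce a candidate $v$. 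One has to do more: either approximate $\varphi$ in $C([0,T],C(\Gamma))$ by $C^1$ data and use a maximum-principle a~priori bound $\|u\|_\infty \le \|u_0\|_\infty + \|\varphi\|_\infty$ to pass to the limit, or appeal to the resolvent-positive machinery of \cite{Are9}. Your positivity argument inherits the same problem: once $h$ varies in time, the correction is (formally) a Duhamel integral of $-\dot h$, which has no sign, and $u_0 - h(0)$ need not be $\ge 0$ either, so the decomposition does not display $u$ as a sum of nonnegative terms. Uniqueness, by contrast, is fine as you wrote it.
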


The last part can be improved with the aid of Corollary~\ref{cmax302}. This is the main
result of this subsection.

\begin{thm} \label{tmax504}
Let $\varphi \in C([0,T],C(\Gamma))$ and $u_0 \in C(\overline \Omega)$
with $u_0|_\Gamma = \varphi(0)$, $\varphi \geq 0$ and $u_0 \geq 0$.
Let $u \in C([0,T],C(\overline \Omega))$ be the mild solution of (\ref{etmax503;1}).
\begin{tabel}
\item \label{tmax504-1}
If $u_0 \neq 0$, then $u(t,x) > 0$ for all $t \in (0,T]$ and $x \in \Omega$.
\item \label{tmax504-2}
If $t_0 \in [0,T)$ and $\varphi(t_0) \neq 0$,
then $u(t,x) > 0$ for all $t \in (t_0,T]$ and $x \in \Omega$.
\end{tabel}
\end{thm}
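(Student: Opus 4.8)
The plan is to reduce the statement to the positivity improving result of Theorem~\ref{tmax502} (equivalently Corollary~\ref{cmax302}) for the semigroup associated with the \emph{Dirichlet} realisation of $A_{c,\max}$. The key observation is that a mild solution of~(\ref{etmax503;1}) can be split as $u = u_{\mathrm{hom}} + w$, where $u_{\mathrm{hom}}$ carries the (non-negative) boundary data $\varphi$ and zero initial value, while $w = S_t u_0$ is the orbit under the Dirichlet semigroup $S$ on $L_2(\Omega)$ generated by $-A_D$ (with $A_D$ the operator associated with the form $\gota$ on $H^1_0(\Omega)$, as in Subsection~\ref{Smax5.2}). To make this splitting rigorous one has to check that $S_t u_0$, extended by the boundary values in the natural way, is indeed a mild solution of the problem with boundary data $0$; this uses the sign condition on $b_k, c_0$ imposed in this subsection, which guarantees that the Dirichlet semigroup really is the relevant one and is positive (via Ouhabaz's criterion, as quoted in Subsection~\ref{Smax5.2}). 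Uniqueness of mild solutions (Theorem~\ref{tmax503}) then identifies this sum with $u$.

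Granting the splitting, statement~\ref{tmax504-1} is immediate: $u_{\mathrm{hom}} \ge 0$ by the positivity part of Theorem~\ref{tmax503} (applied with $u_0 = 0$), and if $u_0 \ge 0$ with $u_0 \ne 0$ then $w(t) = S_t u_0$ satisfies $w(t,x) > 0$ for all $x \in \Omega$ and $t > 0$ by Theorem~\ref{tmax502}\ref{tmax502-1} (the Dirichlet semigroup is positivity improving from $L_2(\Omega)$ into $C_0(\Omega)$, hence certainly into $C(\Omega)$). Adding a non-negative function preserves strict positivity, so $u(t,x) = u_{\mathrm{hom}}(t,x) + w(t,x) > 0$ on $(0,T] \times \Omega$.

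For statement~\ref{tmax504-2} I would exploit the time-translation structure of mild solutions. Fix $t_0 \in [0,T)$ with $\varphi(t_0) \ne 0$. The function $t \mapsto u(t_0 + t)$ on $[0, T - t_0]$ is the mild solution of~(\ref{etmax503;1}) with initial value $u(t_0)$ and boundary data $t \mapsto \varphi(t_0 + t)$, both still non-negative. Since $u(t_0)|_\Gamma = \varphi(t_0) \ne 0$, the initial value is non-zero, so part~\ref{tmax504-1} applied to this shifted problem gives $u(t_0 + t, x) > 0$ for all $t \in (0, T - t_0]$ and $x \in \Omega$, which is exactly the claim. (Alternatively, one argues directly: the inhomogeneous boundary term active at time $t_0$ contributes, via the Duhamel-type representation behind the mild solution, a term of the form $S_{t - t_0}(\text{something non-negative and, near }\Gamma,\text{ non-zero})$, which is strictly positive on $\Omega$ for $t > t_0$ by Theorem~\ref{tmax502} again.)

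The main obstacle I anticipate is not the positivity-improving input — that is handed to us by Theorem~\ref{tmax502} — but the careful justification of the decomposition $u = u_{\mathrm{hom}} + S_\cdot u_0$ at the level of mild solutions: one must verify that $t \mapsto S_t u_0$ lies in $C([0,T], C(\overline\Omega))$ (it does not a priori, since $S_t u_0 \in C_0(\Omega)$ need not extend continuously to $\overline\Omega$ with the right boundary values unless $u_0|_\Gamma = 0$), and reconcile the boundary trace bookkeeping. The clean way around this is to define $u_{\mathrm{hom}}$ as the mild solution with data $(\varphi, u_0)$ \emph{restricted} so that the genuinely interior, $L_2$-regularising part is isolated; concretely, one compares $u$ with the mild solution $\tilde u$ having the same boundary data $\varphi$ but initial value $u_0 \wedge 0 = 0$, and shows $u - \tilde u$ solves the homogeneous-boundary problem with initial value $u_0$, whence $u - \tilde u = S_\cdot u_0$ by uniqueness. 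Once this identification is in place, everything else is a one-line application of the results already proved in Section~\ref{Smax4}.
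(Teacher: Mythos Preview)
Your overall strategy---compare $u$ with a semigroup orbit and absorb the rest into a non-negative remainder via Theorem~\ref{tmax503}---is exactly the paper's. Part~\ref{tmax504-2} by time-translation is also what the paper does.

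The gap is precisely where you locate it, and your proposed fix does not close it. Theorem~\ref{tmax503} requires the compatibility condition $u_0|_\Gamma = \varphi(0)$. In your decomposition, the piece $\tilde u$ has initial value $0$ and boundary data $\varphi$, which forces $\varphi(0) = 0$; the other piece $u - \tilde u$ has initial value $u_0$ and boundary data $0$, which forces $u_0|_\Gamma = 0$. Neither is assumed, so Theorem~\ref{tmax503} does not apply to either piece, and you cannot invoke uniqueness to identify $u - \tilde u$ with $S_\cdot u_0$. Worse, $t \mapsto S_t u_0$ is genuinely discontinuous in $C(\overline\Omega)$ at $t=0$ when $u_0|_\Gamma \neq 0$, so it is simply not a mild solution of anything in the sense of~(\ref{etmax503;1}).

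The missing idea is a cutoff. Pick $x_0 \in \Omega$ with $u_0(x_0) \neq 0$ and $\chi \in C_c^\infty(\Omega)$ with $0 \le \chi \le \one$ and $\chi(x_0) = 1$; set $v_0 = \chi\, u_0 \in C_0(\Omega)$. Now $v_0|_\Gamma = 0$, so the mild solution $v$ with data $(0,v_0)$ exists and equals $T_t v_0$ by uniqueness; Theorem~\ref{tmax502} gives $v(t,x) > 0$ on $(0,T] \times \Omega$. The remainder $u - v$ has initial value $(1-\chi)u_0 \ge 0$ and boundary data $\varphi \ge 0$, and the compatibility $(1-\chi)u_0|_\Gamma = u_0|_\Gamma = \varphi(0)$ holds, so Theorem~\ref{tmax503} applies and yields $u - v \ge 0$. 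Hence $u \ge v > 0$ on $(0,T] \times \Omega$.
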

\begin{proof}
`\ref{tmax504-1}'.
There exists an $x_0 \in \Omega$ such that $u_0(x_0) \neq 0$.
Let $\chi \in C_c^\infty(\Omega)$ be such that $0 \leq \chi \leq \one$
and $\chi(x_0) = 1$.
Consider $v_0 = \chi \, u_0 \in C_0(\Omega)$.
By Theorem~\ref{tmax503} there exists a unique $v \in C([0,T],C(\overline \Omega))$
such that 
\begin{equation}
\int_0^t v(s) \, ds \in D(A_{c,\max})
\quad , \quad
v(t) = v_0 - A_{c,\max} \int_0^t v(s) \, ds
\quad \mbox{and} \quad
v(t)|_\Gamma = 0
\label{etmax504;1}
\end{equation}
for all $t \in [0,T]$.
The function $v$ can be described via a semigroup.
Let $A_c$ be the part of $A_{c,\max}$ in $C_0(\Omega)$.
So 
\[
D(A_c) 
= \{ u \in H^1_\loc(\Omega) \cap C_0(\Omega) : \ca u \in C_0(\Omega) \}
\]
and $A_c = \ca|_{D(A_c)}$.
Then $- A_c$ generates a $C_0$-semigroup on $C_0(\Omega)$ by \cite[Theorem~1.3]{AE9}
(see also Subsection~\ref{Smax5.2} or \cite[Section~4]{ABenilan}).
Let $T$ be the semigroup generated by $-A_c$.
Then $T$ is positive and irreducible by Theorem~\ref{tmax502}.
Define $w \colon [0,T] \to C(\overline \Omega)$ by 
$w(t) = T_t v_0$.
Then it is easy to see that $w \in C([0,T],C(\overline \Omega))$ and that 
$w$ satisfies (\ref{etmax504;1}) with $v$ replaced by $w$.
So $v(t) = w(t) = T_t v_0$ for all $t \in (0,T]$ by the uniqueness property.
The semigroup $T$ also extends to a positive irreducible holomorphic 
$C_0$-semigroup on $L_d(\Omega)$ and this semigroup maps 
$L_d(\Omega)$ into $C_0(\Omega)$.
Hence we can apply Corollary~\ref{cmax302}\ref{cmax302-1} and 
conclude that $v(t,x) > 0$ for all $t \in (0,T]$ and $x \in \Omega$.

Finally consider $u-v \in C([0,T],C(\overline \Omega))$.
Then
\begin{align*}
& \int_0^t (u-v)(s) \, ds \in D(A_{c,\max}), \\
& (u-v)(t) = (u_0-v_0) - A_{c,\max} \int_0^t (u-v)(s) \, ds \\
\text{and} \quad & (u-v)(t)|_\Gamma = \varphi(t)
\end{align*}
for all $t \in [0,T]$.
So $u-v \geq 0$ by the last part of Theorem~\ref{tmax503}.
In particular, $u(t,x) \geq v(t,x) > 0$ for all $t \in (0,T]$ and 
$x \in \Omega$.

`\ref{tmax504-2}'.
Apply Statement~\ref{tmax504-1} to $\tilde u_0 = u(t_0,\cdot)$, 
$\tilde u(t,x) = u(t - t_0,x)$ and $\tilde \varphi(t) = \varphi(t-t_0)$, 
where $t \in [0,T-t_0]$ and $x \in \overline \Omega$.
\end{proof}

In the following corollary we show how a strong parabolic maximum principle can 
be derived from Theorem~\ref{tmax504}.

\begin{cor} \label{cmax508}
Assume that $\ca \one_\Omega = 0$.
Let $\varphi \in C([0,T],C(\Gamma))$ and $u_0 \in C(\overline \Omega)$.
Let $u \in C([0,T],C(\overline \Omega))$ be the mild solution of (\ref{etmax503;1}).
Moreover, let $t_0 \in (0,T]$ and $x_0 \in \Omega$.
If $u(t_0,x_0) \geq u(t,x)$ for all $t \in [0,t_0]$ and $x \in \overline \Omega$, then
$u$ is constant on $[0,t_0] \times \overline \Omega$.
\end{cor}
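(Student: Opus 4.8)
The plan is to reduce the statement to Theorem~\ref{tmax504} by passing to an auxiliary function that measures how far $u$ is from its maximum. Set $M = u(t_0,x_0)$ and consider $w = M \, \one_{\overline \Omega} - u$. Since $\ca \one_\Omega = 0$, the constant function $M \, \one_{\overline \Omega}$ is a mild solution of the problem with boundary data $M \, \one_\Gamma$ and initial value $M \, \one_{\overline \Omega}$ (here one uses that the hypotheses on the coefficients needed for Theorem~\ref{tmax503} are in force, in particular the sign condition on $b_k$ and $c_0$, which is compatible with $\ca \one_\Omega = 0$). Hence $w \in C([0,T],C(\overline \Omega))$ is itself a mild solution of (\ref{etmax503;1}) with boundary data $\tilde\varphi(t) = M\,\one_\Gamma - \varphi(t)$ and initial value $\tilde u_0 = M\,\one_{\overline \Omega} - u_0$; linearity of the mild-solution concept (the defining identities are linear in $u$, $u_0$, $\varphi$) makes this immediate. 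By assumption $w(t,x) \ge 0$ for all $t \in [0,t_0]$ and $x \in \overline \Omega$, so in particular $\tilde u_0 \ge 0$ and $\tilde\varphi(t) \ge 0$ for $t \le t_0$; moreover $w(t_0,x_0) = 0$.

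Now I would apply Theorem~\ref{tmax504}. Work on the time interval $[0,t_0]$ instead of $[0,T]$ (the mild solution restricts). If $\tilde u_0 \neq 0$, then Theorem~\ref{tmax504}\ref{tmax504-1} gives $w(t,x) > 0$ for all $t \in (0,t_0]$ and $x \in \Omega$, contradicting $w(t_0,x_0) = 0$ since $x_0 \in \Omega$ and $t_0 > 0$. Hence $\tilde u_0 = 0$, i.e.\ $u_0 = M \, \one_{\overline\Omega}$. Next suppose there were some $t_1 \in [0,t_0)$ with $\tilde\varphi(t_1) \neq 0$; then Theorem~\ref{tmax504}\ref{tmax504-2} would force $w(t,x) > 0$ for all $t \in (t_1,t_0]$ and $x \in \Omega$, again contradicting $w(t_0,x_0) = 0$. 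Therefore $\tilde\varphi(t) = 0$ for all $t \in [0,t_0)$, and by continuity $\tilde\varphi(t_0) = 0$ as well, so $\varphi(t) = M \, \one_\Gamma$ for all $t \in [0,t_0]$.

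It remains to conclude that $w \equiv 0$ on $[0,t_0] \times \overline\Omega$, i.e.\ that $u$ is the constant $M$ there. At this point $w$ is a mild solution on $[0,t_0]$ with zero boundary data and zero initial value; by the uniqueness statement in Theorem~\ref{tmax503} (applied to $\varphi = 0$, $u_0 = 0$, whose mild solution is the zero function) we get $w \equiv 0$, hence $u \equiv M$ on $[0,t_0] \times \overline\Omega$, which is the assertion.

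The step I expect to be the main obstacle is the bookkeeping around the auxiliary function: one must check that $M\,\one_{\overline\Omega}$ genuinely is a mild solution of (\ref{etmax503;1}) — this needs $\one_\Omega \in D(A_{c,\max})$ with $A_{c,\max}\one_\Omega = 0$, which is exactly the hypothesis $\ca\one_\Omega = 0$ together with $\one_{\overline\Omega} \in C(\overline\Omega)$ — and that subtracting mild solutions again yields a mild solution with the subtracted data, which follows by linearity of the three defining conditions. One also has to be slightly careful that the sign/structural conditions on the coefficients imposed in Subsection~\ref{Smax5.3} (Wiener regularity, connectedness of $\Omega$, and the integral inequality on $b_k, c_0$) are genuinely available, so that Theorems~\ref{tmax503} and~\ref{tmax504} apply; these are standing assumptions of the subsection, so no extra work is needed beyond invoking them. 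Everything else is a direct citation of the two preceding theorems.
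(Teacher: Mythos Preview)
Your proof is correct and follows essentially the same route as the paper: define $v = u(t_0,x_0)\,\one_{\overline\Omega} - u$, check it is a mild solution with non-negative data on $[0,t_0]$ (the paper verifies the defining identities for $v$ directly rather than going through ``constants are mild solutions plus linearity'', but this is the same computation), then use the contrapositive of both parts of Theorem~\ref{tmax504} together with $v(t_0,x_0)=0$ to force $\tilde u_0 = 0$ and $\tilde\varphi = 0$, and conclude $v\equiv 0$ by the uniqueness in Theorem~\ref{tmax503}.
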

\begin{proof}
Define $v \in C([0,t_0],C(\overline \Omega))$ by 
$v(t,x) = u(t_0,x_0) - u(t,x)$.
Define $v_0 \in C(\overline \Omega)$ by $v_0(x) = u(t_0,x_0) - u_0(x)$
and define $\psi \in C([0,t_0],C(\Gamma))$ by 
$\psi(t,x) = u(t_0,x_0) - \varphi(t,x)$.
Then $v_0|_\Gamma = \psi(0)$ and $v \geq 0$.
So $\psi \geq 0$ and $v_0 \geq 0$.
Also 
\[
\int_0^t v(s) \, ds \in D(A_{c,\max})
\quad , \quad
v(t) = v_0 - A_{c,\max} \int_0^t v(s) \, ds
\quad \mbox{and} \quad
v(t)|_\Gamma = \psi(t)
\]
for all $t \in [0,t_0]$.
Since $v(t_0,x_0) = 0$, it follows from Theorem~\ref{tmax504}
that $v_0 = 0$ and $\psi = 0$.
Then the uniqueness part of Theorem~\ref{tmax503} implies that $v = 0$.
Hence $u$ is constant on $[0,t_0] \times \overline \Omega$.
\end{proof}

In the next two corollaries we deduces a strong elliptic 
maximum principle from the parabolic result in Corollary~\ref{cmax508}.

\begin{cor} \label{cmax505}
Let $u \in H^1_\loc(\Omega) \cap C(\overline \Omega)$ and suppose that 
$\ca u = 0$.
If $u \geq 0$ and $u|_\Gamma \neq 0$, then $u(x) > 0$ for all $x \in \Omega$.
\end{cor}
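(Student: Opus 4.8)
The plan is to recognise the stationary function $u$ as a mild solution of the parabolic problem (\ref{etmax503;1}) that happens to be constant in time, and then to apply Theorem~\ref{tmax504}. Fix any $T \in (0,\infty)$, put $u_0 := u$, and let $\varphi \in C([0,T],C(\Gamma))$ be the constant curve $\varphi(t) := u|_\Gamma$. By hypothesis $u \geq 0$, so $u_0 \geq 0$ and $\varphi \geq 0$, and clearly $u_0|_\Gamma = u|_\Gamma = \varphi(0)$. Since $u \in H^1_\loc(\Omega) \cap C(\overline \Omega)$ and $\ca u = 0 \in C(\overline \Omega)$, we have $u \in D(A_{c,\max})$ with $A_{c,\max} u = 0$. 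The standing assumptions of this subsection (connectedness, Wiener regularity, and the sign condition on the lower order coefficients) guarantee that Theorems~\ref{tmax503} and~\ref{tmax504} are available.

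Next I would check that the constant curve $t \mapsto u$ is the mild solution of (\ref{etmax503;1}) for this data. For each $t \in [0,T]$ one has $\int_0^t u \, ds = t\,u \in D(A_{c,\max})$ (a linear subspace) and $A_{c,\max}(t\,u) = t\,A_{c,\max} u = 0$, so $u = u_0 - A_{c,\max}\int_0^t u \, ds$, while $u|_\Gamma = \varphi(t)$ holds trivially. Hence the constant curve satisfies all three conditions defining a mild solution, and by the uniqueness assertion of Theorem~\ref{tmax503} it is \emph{the} mild solution of (\ref{etmax503;1}).

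It remains to invoke Theorem~\ref{tmax504}\ref{tmax504-2} with $t_0 = 0$: since $\varphi(0) = u|_\Gamma \neq 0$, the theorem yields $u(t,x) > 0$ for all $t \in (0,T]$ and all $x \in \Omega$; as this mild solution does not depend on $t$, this is exactly the claim that $u(x) > 0$ for every $x \in \Omega$. I do not expect a genuine obstacle here: the only point requiring care is matching the stationary identity $\ca u = 0$ with the integral definition of a mild solution and verifying the boundary compatibility $u_0|_\Gamma = \varphi(0)$, after which the substantive content (positivity improving on $C_0(\Omega)$, obtained via Corollary~\ref{cmax302} in the proof of Theorem~\ref{tmax504}) does all the work.
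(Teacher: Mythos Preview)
Your proof is correct and follows essentially the same route as the paper: view the stationary $u$ as a time-constant mild solution of (\ref{etmax503;1}) and invoke Theorem~\ref{tmax504}. The only cosmetic difference is that the paper applies part~\ref{tmax504-1} (using that $u|_\Gamma \neq 0$ forces $u_0 = u \neq 0$), whereas you apply part~\ref{tmax504-2} directly to the boundary datum; either works.
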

\begin{proof}
Define $v_0 = u$, $\varphi(t) = u|_\Gamma$ and $v(t) = u$
for all $t \in [0,T]$.
Then $v$ is a mild solution of (\ref{etmax503;1}) with $u_0$ replaced by $v_0$.
Now apply Theorem~\ref{tmax504}\ref{tmax504-1}.
\end{proof}

\begin{cor} \label{cmax506}
Suppose that $\ca \one_\Omega = 0$.
Let $u \in H^1_\loc(\Omega) \cap C(\overline \Omega,\Ri)$ and suppose that 
$\ca u = 0$.
If there there exists an $x_0 \in \Omega$ such that 
$u(x_0) = \max_\Omega u$, then $u$ is constant.
\end{cor}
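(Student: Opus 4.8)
The plan is to deduce the strong elliptic maximum principle (Corollary~\ref{cmax506}) from the strong parabolic maximum principle established in Corollary~\ref{cmax508}, exactly in the spirit of the derivation of Corollary~\ref{cmax505} from Theorem~\ref{tmax504}. The idea is that a time-independent function that is $\ca$-harmonic automatically solves the parabolic problem~(\ref{etmax503;1}), so the parabolic result applies verbatim.

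First I would set up the parabolic data associated with $u$. Since $u \in H^1_\loc(\Omega) \cap C(\overline\Omega,\Ri)$ with $\ca u = 0$, define $u_0 = u$, $\varphi(t) = u|_\Gamma$ for all $t \in [0,T]$, and $v(t) = u$ for all $t \in [0,T]$. Then $v \in C([0,T],C(\overline\Omega))$, the compatibility condition $u_0|_\Gamma = \varphi(0)$ holds trivially, and because $u$ is constant in $t$ we have $\int_0^t v(s)\,ds = t\,u \in D(A_{c,\max})$ with $A_{c,\max}(t u) = t\,\ca u = 0$, so $v(t) = u_0 - A_{c,\max}\int_0^t v(s)\,ds$ and $v(t)|_\Gamma = \varphi(t)$. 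Hence $v$ is the (unique) mild solution of~(\ref{etmax503;1}).

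Next I would apply Corollary~\ref{cmax508}. Set $x_0 \in \Omega$ with $u(x_0) = \max_\Omega u$. By continuity on the compact set $\overline\Omega$, and since $\Omega$ is dense in $\overline\Omega$, we actually have $u(x_0) = \max_{\overline\Omega} u \ge u(x)$ for all $x \in \overline\Omega$; as $v$ is constant in time this gives $u(x_0) \ge v(t,x)$ for all $t \in [0,t_0]$ and $x \in \overline\Omega$, for any choice of $t_0 \in (0,T]$. Since $\ca\one_\Omega = 0$ is assumed, Corollary~\ref{cmax508} yields that $v$ is constant on $[0,t_0]\times\overline\Omega$; restricting to a single time slice, $u$ is constant on $\overline\Omega$.

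I do not expect a serious obstacle here; the only point needing a word of care is the passage from ``$u(x_0) = \max_\Omega u$'' to the hypothesis of Corollary~\ref{cmax508}, which requires knowing that the supremum over $\Omega$ equals the maximum over $\overline\Omega$ — this is immediate from continuity of $u$ on $\overline\Omega$ and density of $\Omega$. One should also note that $u$ being real-valued (not merely that $u \ge 0$, which is \emph{not} assumed in this corollary, unlike in Corollary~\ref{cmax505}) is exactly what makes ``$\max_\Omega u$'' meaningful and is what is needed to invoke Corollary~\ref{cmax508}; no positivity of $u$ itself is used, only of $v_0 = u(x_0) - u$ inside the proof of Corollary~\ref{cmax508}.
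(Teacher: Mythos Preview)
Your argument is correct. The paper's proof takes a slightly different (and shorter) route: instead of invoking Corollary~\ref{cmax508}, it sets $v = u(x_0)\,\one_{\overline\Omega} - u$, notes that $v \geq 0$, $\ca v = 0$ (here $\ca\one_\Omega = 0$ is used), and $v(x_0) = 0$, and then applies Corollary~\ref{cmax505} to conclude $v = 0$. Both arguments rest on the same parabolic input, Theorem~\ref{tmax504}: the verification that a time-independent $\ca$-harmonic function is a mild solution, which you carry out explicitly, has already been absorbed into the proof of Corollary~\ref{cmax505}, so the paper can quote that corollary in one line; conversely, the subtraction $u(x_0) - u$ that the paper performs up front is, in your approach, hidden inside the proof of Corollary~\ref{cmax508}. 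Neither route buys anything the other does not---they are two equally valid packagings of the same reduction to the parabolic strong minimum principle.
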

\begin{proof}
Consider $v = u(x_0) \, \one_{\overline \Omega} - u$.
Then $v \geq 0$ and $\ca v = 0$.
Since $v(x_0) = 0$, it follows from Corollary~\ref{cmax505}
that $v = 0$.
Hence $u = u(x_0) \, \one_{\overline \Omega}$ and $u$ is constant.
\end{proof}

\subsection{Mild and very weak solutions} \label{Smax5.6}

Theorem~\ref{tmax504} and the parabolic maximum principle 
in Corollary~\ref{cmax508} used the concept of a mild solution of (\ref{etmax503;1}).
In this subsection we show
under a differentiability condition that mild 
solutions are the same as very weak solutions.

Throughout this subsection we assume that the coefficients $a_{kl}, c_k$ satisfy
$a_{kl}, c_k \in L_\infty(\Omega,\Ri) \cap C^1(\Omega)$ for all $k,l \in \{ 1,\ldots,d \}$.

Fix a time $T \in (0,\infty)$. For all $u \in C([0,T] \times \overline \Omega)$ define 
$\tilde u \in C([0,T],C(\overline \Omega))$ by 
$(\tilde u(t))(x) = u(t,x)$.
If $\psi \in C_c^\infty(\Omega)$ define $\ca^* \psi \in C_c(\Omega)$
by 
\[
\ca^* \psi 
= - \sum_{k,l=1}^d \partial_k  ( a_{kl} \, \partial_l \psi)
   + \sum_{k=1}^d b_k \, \partial_k \psi
   - \sum_{k=1}^d \partial_k ( c_k \, \psi)
   + c_0 \, \psi
.  \]
If $\varphi \in C_c^\infty((0,T) \times \Omega)$ define 
$\ca^* \varphi \in C([0,T] \times \overline \Omega)$ by 
\[
(\ca^* \varphi)(t,x) 
= \Big( \ca^*(\tilde \varphi(t)) \Big)(x)
.  \]
Moreover, we define $\varphi_t \in C_c^\infty((0,T) \times \Omega)$ by 
\[
\varphi_t(t,x) = (\frac{\partial}{\partial t} \varphi)(t,x).
\]

The second condition in the next theorem states that $u$ is a {\it very weak solution}.

\begin{thm} \label{tmax601}
Let $u \in C([0,T] \times \overline \Omega)$.
Then the following are equivalent.
\begin{tabeleq}
\item \label{tmax601-1}
$\int_0^t \tilde u(s) \, ds \in D(A_{c,\max})$ and 
$\tilde u(t) = \tilde u(0) - A_{c,\max} \int_0^t \tilde u(s) \, ds$
for all $t \in [0,T]$.
\item \label{tmax601-2}
If $\varphi \in C_c^\infty((0,T) \times \Omega)$, then 
$\int_0^T \int_\Omega u(t,x) \, (\varphi_t - \ca^* \varphi)(t,x) \, dx \, dt = 0$.
\end{tabeleq}
\end{thm}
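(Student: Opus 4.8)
The plan is to prove the two implications \ref{tmax601-1}$\Rightarrow$\ref{tmax601-2} and \ref{tmax601-2}$\Rightarrow$\ref{tmax601-1} separately. In both directions the bridge between the two conditions is the family of tested identities
\[
\int_\Omega \big(u(t,x)-u(0,x)\big)\,\overline{\psi(x)}\,dx
= -\int_\Omega \Big(\int_0^t u(s,x)\,ds\Big)\,\overline{\ca^*\psi(x)}\,dx
\qquad(t\in[0,T],\ \psi\in C_c^\infty(\Omega)),
\]
which I shall call $(\ast)$. The key algebraic fact, available precisely because $a_{kl},c_k\in C^1(\Omega)$, is that integration by parts in the space variables turns the form defining $\ca$ into the formal adjoint $\ca^*$: for all $w\in H^1_\loc(\Omega)$ and $v\in C_c^\infty(\Omega)$ one has $\langle\ca w,v\rangle_{\cd'(\Omega)\times\cd(\Omega)}=\int_\Omega w\,\overline{\ca^* v}$ (this uses only $a_{kl},c_k\in C^1$). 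Complex conjugates on test functions are immaterial, since they are recovered by replacing $\varphi$ by $\overline\varphi$.

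For \ref{tmax601-1}$\Rightarrow$\ref{tmax601-2}: condition \ref{tmax601-1} says exactly that $U(t):=\int_0^t\tilde u(s)\,ds\in D(A_{c,\max})$ with $A_{c,\max}U(t)=\tilde u(0)-\tilde u(t)$, and pairing this with $\psi\in C_c^\infty(\Omega)$ and invoking the integration-by-parts identity yields $(\ast)$. Now fix $\varphi\in C_c^\infty((0,T)\times\Omega)$. For each $t$ I would apply $(\ast)$ with $\psi=\varphi_t(t,\cdot)$ and use that $\ca^*$ commutes with $\partial_t$ (the coefficients are $t$-independent) to obtain
\[
\int_\Omega u(t,x)\,\overline{\varphi_t(t,x)}\,dx
=\int_\Omega u(0,x)\,\overline{\partial_t\varphi(t,x)}\,dx
-\int_\Omega U(t,x)\,\overline{\partial_t(\ca^*\varphi)(t,x)}\,dx .
\]
Integrating over $t\in[0,T]$, the first term on the right vanishes because $\int_0^T\partial_t\varphi(t,x)\,dt=0$, and in the last term I integrate by parts in $t$: the boundary contributions vanish (at $t=0$ since $U(0,\cdot)=0$, at $t=T$ since $\ca^*\varphi(T,\cdot)=0$) and $\partial_t U(t,x)=u(t,x)$, which leaves $\int_0^T\!\int_\Omega u\,\overline{\ca^*\varphi}$. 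Hence $\int_0^T\!\int_\Omega u\,(\varphi_t-\ca^*\varphi)=0$, which is \ref{tmax601-2}.

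For \ref{tmax601-2}$\Rightarrow$\ref{tmax601-1}: I would first restrict \ref{tmax601-2} to tensor test functions $\varphi(t,x)=\theta(t)\psi(x)$ with $\theta\in C_c^\infty((0,T))$ and $\psi\in C_c^\infty(\Omega)$. This says that the distributional $t$-derivative of $t\mapsto\int_\Omega u(t,x)\psi(x)\,dx$ equals $-\int_\Omega u(t,x)\ca^*\psi(x)\,dx$; since the right-hand side is continuous on $[0,T]$, that function of $t$ is $C^1$, and integrating from $0$ to $t$ produces $(\ast)$. Thus, for each fixed $t$, the function $U(t,\cdot)$, which lies in $C(\overline\Omega)\subset L_2(\Omega)$ since $u\in C([0,T]\times\overline\Omega)$, is a very weak solution of $\ca\,U(t)=u(0,\cdot)-u(t,\cdot)$ with right-hand side in $L_2(\Omega)$. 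At this point I invoke the interior regularity result proved in the appendix: an $L_2(\Omega)$ function $w$ satisfying $\int_\Omega w\,\overline{\ca^*\psi}=\int_\Omega f\,\overline\psi$ for all $\psi\in C_c^\infty(\Omega)$, with $f\in L_2(\Omega)$, automatically lies in $H^1_\loc(\Omega)$ and satisfies $\ca w=f$ in $\cd'(\Omega)$. Applying this with $w=U(t)$ and $f=u(0,\cdot)-u(t,\cdot)$, and noting that both lie in $C(\overline\Omega)$, we conclude $U(t)\in H^1_\loc(\Omega)\cap C(\overline\Omega)$ with $\ca\,U(t)\in C(\overline\Omega)$; that is, $U(t)\in D(A_{c,\max})$ and $A_{c,\max}U(t)=\tilde u(0)-\tilde u(t)$, which is precisely \ref{tmax601-1}.

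The only genuinely nontrivial ingredient is the interior elliptic regularity statement used in the last step — upgrading an $L_2$ function that pairs correctly against $\ca^*\psi$ to an $H^1_\loc$ function; everything else is bookkeeping with Fubini and integration by parts. In this paper that regularity result is isolated and given a self-contained elementary proof in the appendix, so for the present argument it may be treated as a black box.
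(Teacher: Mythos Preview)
Your argument is correct and follows essentially the same route as the paper: in both directions you pass through the tested identity $(\ast)$, use tensor test functions and du Bois--Reymond for \ref{tmax601-2}$\Rightarrow$\ref{tmax601-1}, and invoke the appendix regularity result (Proposition~\ref{pmaxapp1}) to upgrade $U(t)$ to $H^1_\loc(\Omega)$. The only difference is cosmetic: in \ref{tmax601-1}$\Rightarrow$\ref{tmax601-2} the paper writes $u=\partial_t v$ and integrates by parts in $t$ before passing to $\ca^*$, whereas you first isolate $(\ast)$ and then apply it with $\psi=\varphi_t(t,\cdot)$, but the computations are the same.
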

\begin{proof}
`\ref{tmax601-1}$\Rightarrow$\ref{tmax601-2}'.
Let $\varphi \in C_c^\infty((0,T) \times \Omega)$.
Define $v \in C([0,T] \times \overline \Omega)$ by 
$v(t,x) = \int_0^t u(s,x) \, ds$.
Then $\tilde v(t) = \int_0^t \tilde u(s) \, ds$ for all $t \in [0,T]$.
Therefore
\begin{eqnarray*}
\int_0^T \int_\Omega u(t,x) \, (\ca^* \varphi)(t,x) \, dx \, dt
& = & \int_\Omega \int_0^T \Big( \tfrac{\partial}{\partial t} v(t,x) \Big)
            \, (\ca^* \varphi)(t,x) \, dt \, dx  \\
& = & - \int_\Omega \int_0^T v(t,x) \, (\ca^* \varphi_t)(t,x) \, dt \, dx  \\
& = & - \int_0^T (\tilde v(t), \ca^* \overline{\tilde \varphi_t(t)})_{L_2(\Omega)} \, dt  \\
& = & - \int_0^T (A_{c,\max} \tilde v(t), \overline{\tilde \varphi_t(t)})_{L_2(\Omega)} \, dt  \\
& = & \int_0^T (\tilde u(t) - \tilde u(0), \overline{\tilde \varphi_t(t)})_{L_2(\Omega)} \, dt  \\
& = & \int_0^T \int_\Omega u(t,x) \, \varphi_t(t,x) \, dx \, dt 
   - \int_\Omega u(0,x) \int_0^T \varphi_t(t,x) \, dt \, dx .
\end{eqnarray*}
Since $\int_0^T \varphi_t(t,x) \, dt = \varphi(T,x) - \varphi(0,x) = 0$
for all $x \in \Omega$, the implication follows.

`\ref{tmax601-2}$\Rightarrow$\ref{tmax601-1}'.
Let $\psi \in C_c^\infty(\Omega)$,
and define $f,g \in C[0,T]$ by 
$f(t) = (\tilde u(t), \psi)_{L_2(\Omega)}$ and
$g(t) = (\tilde u(t), \ca^* \psi)_{L_2(\Omega)}$.
Let $\tau \in C_c^\infty((0,T))$.
Then $\tau \otimes \overline \psi \in C_c^\infty((0,T) \times \Omega)$.
So by assumption
\[\int_0^T \int_\Omega 
    u(t,x) \, ((\tau \otimes \overline \psi)_t - \ca^* (\tau \otimes \overline \psi))(t,x) \, dx \, dt = 0.
\]
Hence 
\begin{eqnarray*}
\int_0^T \tau'(t) \, f(t) \, dt
& = & \int_0^T \int_\Omega u(t,x) \, \tau'(t) \, \overline \psi(x) \, dx \, dt  \\
& = & \int_0^T \int_\Omega u(t,x) \, \tau(t) \, (\ca^* \overline \psi)(x) \, dx \, dt  \\
& = & \int_0^T \tau(t) \, g(t) \, dt
\end{eqnarray*}
and $f' = - g$ weakly.
Since $f$ and $g$ are continuous the lemma of du Bois-Reymond implies that 
$f$ is differentiable and $f' = - g$ in the classical sense.
Let $t \in [0,T]$.
Then 
\begin{eqnarray*}
(\tilde u(t), \psi)_{L_2(\Omega)}
& = & f(t)
= f(0) - \int_0^t g(s) \, ds
= (\tilde u(0), \psi)_{L_2(\Omega)} 
   - \int_0^t (\tilde u(s), \ca^* \psi)_{L_2(\Omega)} \, ds
.  
\end{eqnarray*}
So 
\[
\Big( \int_0^t \tilde u(s) \, ds, \ca^* \psi \Big)_{L_2(\Omega)}
= ( \tilde u(0) - \tilde u(t), \psi)_{L_2(\Omega)}
.  \]
This is for all $\psi \in C_c^\infty(\Omega)$.
It follows from elliptic regularity, see Proposition~\ref{pmaxapp1} in the appendix, that 
$\int_0^t \tilde u(s) \, ds \in H^1_\loc(\Omega)$.
Hence~\ref{tmax601-1} is valid.
\end{proof}

Now we can reformulate the results of the previous subsection
using the notion of very weak solutions.
We consider the parabolic cylinder $\Omega_T = (0,T) \times \Omega$
with parabolic boundary 
$\partial^* \Omega_T 
   = \Big( \{ 0 \} \times \overline \Omega \Big) \cup 
     \Big( (0,T] \times \partial \Omega \Big)$.
Given $\psi \in C(\partial^* \Omega_T)$ we formally
consider the Dirichlet problem for the heat equation
\[
D(\psi)
\hspace*{20mm}
\left[ \begin{array}{ll}
   u \in C(\overline{\Omega_T}) ,  \\[5pt]
   \partial_t u - \ca u = 0 & \mbox{on } \Omega_T ,  \\[5pt]
   u|_{\partial^* \Omega_T} = \psi .
       \end{array} \right.
\]
We say that $u \in C(\overline{\Omega_T})$ is a {\bf very weak solution of $D(\psi)$}
if 
\[
\int_0^T \int_\Omega u(t,x) \, (\varphi_t - \ca^* \varphi)(t,x) \, dx \, dt = 0
\]
for all $\varphi \in C_c^\infty((0,T) \times \Omega)$ and 
$u|_{\partial^* \Omega_T} = \psi$.
Then the following holds.

\begin{thm} \label{tmax602}
Assume $\Omega$ is connected and Wiener regular.
Then for all 
$\psi \in C(\partial^* \Omega_T)$ there exists a unique very weak solution 
of $D(\psi)$.
\end{thm}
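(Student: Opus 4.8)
The plan is to prove existence and uniqueness separately, reducing both to the theory developed for mild solutions via the equivalence in Theorem~\ref{tmax601}. First I would establish \emph{uniqueness}: suppose $u_1$ and $u_2$ are both very weak solutions of $D(\psi)$. Their difference $w = u_1 - u_2$ lies in $C(\overline{\Omega_T})$, satisfies the very weak formulation with right-hand side zero, and vanishes on $\partial^* \Omega_T$; in particular $w(0,\cdot) = 0$ and $w|_{(0,T] \times \Gamma} = 0$. By Theorem~\ref{tmax601} (applied to $u = w$, whose boundary trace is the zero function $\varphi \equiv 0$), the function $w$ is a mild solution of (\ref{etmax503;1}) with $u_0 = 0$ and $\varphi = 0$. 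The uniqueness statement in Theorem~\ref{tmax503} then forces $w = 0$.

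Next I would establish \emph{existence}. Given $\psi \in C(\partial^* \Omega_T)$, set $u_0 = \psi(0,\cdot) \in C(\overline\Omega)$ and $\varphi \in C([0,T], C(\Gamma))$ given by $\varphi(t) = \psi(t,\cdot)|_\Gamma$; these are continuous by continuity of $\psi$ on the compact set $\partial^* \Omega_T$, and they satisfy the compatibility condition $u_0|_\Gamma = \varphi(0)$. By Theorem~\ref{tmax503} there is a mild solution $u \in C([0,T], C(\overline\Omega))$ of (\ref{etmax503;1}) for this data. Identifying $u$ with the function $(t,x) \mapsto (u(t))(x)$ on $\overline{\Omega_T}$, one checks it lies in $C(\overline{\Omega_T})$ (joint continuity follows from the standard fact that a continuous map $[0,T] \to C(\overline\Omega)$ corresponds to a jointly continuous function on $[0,T] \times \overline\Omega$ since $\overline\Omega$ is compact). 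Its restriction to $\partial^*\Omega_T$ equals $\psi$: on $\{0\} \times \overline\Omega$ because $u(0) = u_0$, and on $(0,T] \times \Gamma$ because $u(t)|_\Gamma = \varphi(t)$. Finally, since $u$ is a mild solution, Theorem~\ref{tmax601} gives that $u$ satisfies the very weak identity, so $u$ is a very weak solution of $D(\psi)$.

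One technical point that requires care is that $u$ in Theorem~\ref{tmax601} must be \emph{continuous up to the boundary} $\overline{\Omega_T}$ for the equivalence to apply, and the mild solution furnished by Theorem~\ref{tmax503} is indeed of this form; conversely, the definition of very weak solution of $D(\psi)$ already builds in $u \in C(\overline{\Omega_T})$, so the hypotheses match on both sides. I would spell this identification out, noting that the notion of mild solution in (\ref{etmax503;1}) and the notion of very weak solution of $D(\psi)$ carry exactly the same boundary data once $u_0$ and $\varphi$ are read off from $\psi$ as above.

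The main obstacle, such as it is, is bookkeeping rather than depth: making sure the translation between the "$u \in C([0,T],C(\overline\Omega))$ plus a separate $\varphi$ datum" picture of Sections~\ref{Smax5.3}--\ref{Smax5.6} and the "single function $u \in C(\overline{\Omega_T})$ with prescribed parabolic boundary values" picture of $D(\psi)$ is done cleanly, and that the compatibility condition $u_0|_\Gamma = \varphi(0)$ needed to invoke Theorem~\ref{tmax503} is automatically satisfied because both sides equal $\psi(0,\cdot)|_\Gamma$. No new analytic input beyond Theorems~\ref{tmax503} and~\ref{tmax601} is needed.
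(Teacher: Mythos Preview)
Your proposal is correct and follows exactly the route the paper intends: the paper does not spell out a proof of Theorem~\ref{tmax602} but presents it explicitly as a reformulation of Theorem~\ref{tmax503} via the equivalence established in Theorem~\ref{tmax601}, which is precisely your argument. The bookkeeping you describe (matching $u_0$ and $\varphi$ with $\psi$, checking the compatibility $u_0|_\Gamma = \varphi(0)$, and identifying $C([0,T],C(\overline\Omega))$ with $C(\overline{\Omega_T})$) is all that is needed.
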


For this solution of $D(\psi)$ the following maximum principles are valid.

\begin{thm} \label{tmax603}
Assume $\Omega$ is connected and Wiener regular.
Let $\psi \in C(\partial^*_T \Omega)$ and let 
$u \in C(\overline{\Omega_T})$ be the very weak solution $D(\psi)$.
Then one has the following.
\begin{tabel} 
\item \label{tmax603-1}
If $\psi \geq 0$, then $u \geq 0$.
\item \label{tmax603-2}
Let $t_0 \in [0,T)$ and $x_0 \in \overline \Omega$.
Suppose that $u(t_0,x_0) > 0$ and $\psi \geq 0$.
Then $u(t,x) > 0$ for all $t \in (t_0,T]$ and $x \in \Omega$.
\item \label{tmax603-3}
Suppose $\ca \one_\Omega = 0$.
Let $t_0 \in (0,T]$ and $x_0 \in \Omega$.
If $u(t,x) \leq u(t_0,x_0)$ for all $t \in [0,t_0]$ and $x \in \overline \Omega$,
then $u$ is constant on $[0,t_0] \times \overline \Omega$.
\end{tabel}
\end{thm}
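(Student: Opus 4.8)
The plan is to convert the problem $D(\psi)$ into an equivalent problem about mild solutions of~(\ref{etmax503;1}), for which Statements~\ref{tmax603-1}--\ref{tmax603-3} have essentially already been proved in Subsection~\ref{Smax5.3}. So let $u \in C(\overline{\Omega_T})$ be the very weak solution of $D(\psi)$ (which exists and is unique by Theorem~\ref{tmax602}), and set $u_0 = u(0,\cdot) \in C(\overline \Omega)$ and $\varphi(t) = u(t,\cdot)|_\Gamma$. The boundary identity $u|_{\partial^* \Omega_T} = \psi$ then says precisely that $u_0 = \psi(0,\cdot)$, that $\varphi \in C([0,T],C(\Gamma))$ with $\varphi(t) = \psi(t,\cdot)|_\Gamma$, and that $u_0|_\Gamma = \varphi(0)$. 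By Theorem~\ref{tmax601} the very weak equation for $u$ is equivalent to condition~\ref{tmax601-1}, and together with the two identities just recorded this is exactly the assertion that the curve $t \mapsto u(t,\cdot)$ is the mild solution of~(\ref{etmax503;1}) with data $u_0$ and $\varphi$. Applying the same equivalence over a subcylinder $(0,t_1) \times \Omega$ — every test function in $C_c^\infty((0,t_1) \times \Omega)$ is also admissible for the full cylinder — shows that the restriction of this curve to $[0,t_1]$ is the mild solution of~(\ref{etmax503;1}) on $[0,t_1]$.

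With this dictionary, Statement~\ref{tmax603-1} is immediate from the last assertion of Theorem~\ref{tmax503}: if $\psi \geq 0$, then $u_0 \geq 0$ and $\varphi \geq 0$, hence $u \geq 0$. For Statement~\ref{tmax603-2}, suppose $\psi \geq 0$ and $u(t_0,x_0) > 0$ with $t_0 \in [0,T)$ and $x_0 \in \overline \Omega$. By Statement~\ref{tmax603-1} we have $u \geq 0$, so $u(t_0,\cdot) \geq 0$, and $u(t_0,\cdot) \neq 0$ since its value at $x_0$ is positive (whether $x_0$ lies in $\Omega$ or on $\Gamma$ plays no role). Translating time by $t_0$, as in the proof of Theorem~\ref{tmax504}\ref{tmax504-2}, the curve $t \mapsto u(t_0+t,\cdot)$ on $[0,T-t_0]$ is the mild solution of~(\ref{etmax503;1}) with nonnegative and nonzero initial value $u(t_0,\cdot)$ and nonnegative boundary data $t \mapsto \varphi(t_0+t)$; Theorem~\ref{tmax504}\ref{tmax504-1} then yields $u(t_0+t,x) > 0$ for all $t \in (0,T-t_0]$ and $x \in \Omega$, which is the claim. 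For Statement~\ref{tmax603-3}, assume $\ca \one_\Omega = 0$ and $u(t,x) \leq u(t_0,x_0)$ for all $t \in [0,t_0]$ and $x \in \overline \Omega$, with $t_0 \in (0,T]$ and $x_0 \in \Omega$; then the restriction of $t \mapsto u(t,\cdot)$ to $[0,t_0]$ is the mild solution of~(\ref{etmax503;1}) on $[0,t_0]$ by the first paragraph, and Corollary~\ref{cmax508} (applied with its $T$ equal to $t_0$) gives that $u$ is constant on $[0,t_0] \times \overline \Omega$.

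The only step requiring real care is the identification in the first paragraph: matching the parabolic boundary data of $D(\psi)$ with the pair $(u_0,\varphi)$ in~(\ref{etmax503;1}), and checking that the very weak property localises to subcylinders so that Theorem~\ref{tmax601} may be invoked on $[0,t_1]$. Everything of substance — positivity, strict positivity and the strong maximum principle — is then inherited from Theorems~\ref{tmax503} and~\ref{tmax504} and Corollary~\ref{cmax508}, under the standing assumptions of Subsection~\ref{Smax5.3} (connectedness, Wiener regularity, and the sign condition on the $b_k$ and $c_0$) that are presupposed here.
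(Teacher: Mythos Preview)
Your proposal is correct and is precisely the argument the paper intends: the paper does not spell out a proof of Theorem~\ref{tmax603} but presents it explicitly as a reformulation of the results in Subsection~\ref{Smax5.3} via the mild/very weak equivalence of Theorem~\ref{tmax601}, and you carry out exactly that translation. One minor simplification: once Theorem~\ref{tmax601} has identified $\tilde u$ as the mild solution on $[0,T]$, the restriction to $[0,t_1]$ is a mild solution on $[0,t_1]$ directly from the defining identities, so the separate subcylinder invocation of Theorem~\ref{tmax601} is unnecessary.
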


The maximum principle for elliptic operators has been proved before 
in \cite[Theorem~8.19]{GT}.

\appendix

\section{Regularity} \label{Smaxapp}

In the proof of Theorem~\ref{tmax601} we used the following regularity result
for very weak solutions.

\begin{prop} \label{pmaxapp1}
Let $\Omega \subset \Ri^d$ be open.
For all $k,l \in \{ 1,\ldots,d \} $ let 
$a_{kl}, c_k \in W^{1,\infty}(\Omega)$ and 
$b_k,c_0 \in L_\infty(\Omega)$.
Assume that there exists a $\mu > 0$ such that 
\begin{equation}
\RRe \sum_{k,l=1}^d a_{kl}(x) \, \xi_k \, \overline{\xi_l}
\geq \mu \, |\xi|^2
\label{epmaxapp1;1}
\end{equation}
for all $\xi \in \Ci^d$ and almost every $x \in \Omega$.
For all  $\varphi \in C_c^\infty(\Omega)$ define $\ca^* \varphi \in L_{\infty,c}(\Omega)$
by 
\[
\ca^* \varphi 
= - \sum_{k,l=1}^d \partial_k  ( \overline{a_{kl}} \, \partial_l \varphi)
   + \sum_{k=1}^d \overline{b_k} \, \partial_k \varphi
   - \sum_{k=1}^d \partial_k ( \overline{c_k} \, \varphi)
   + \overline{c_0} \, \varphi
.  \]
Let $u,f,f_1,\ldots,f_d \in L_2(\Omega)$ and suppose that 
\[
(u,\ca^* \varphi)_{L_2(\Omega)} 
= (f,\varphi)_{L_2(\Omega)} - \sum_{j=1}^d (f_j, \partial_j \varphi)_{L_2(\Omega)}
\]
for all $\varphi \in C_c^\infty(\Omega)$.
Then $u \in W^{1,2}_\loc(\Omega)$.
\end{prop}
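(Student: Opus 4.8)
The plan is to regularise $u$ by mollification and thereby reduce the claim to the standard interior energy estimate for smooth functions solving a divergence-form equation. Fix an even mollifier $\rho\in C_c^\infty(B(0,1))$ with $\int\rho=1$, set $\rho_\epsilon(x)=\epsilon^{-d}\rho(x/\epsilon)$, and write $u_\epsilon=\rho_\epsilon * u$, $f_\epsilon=\rho_\epsilon * f$, $(f_j)_\epsilon=\rho_\epsilon * f_j$; these are smooth on every compact subset of $\Omega$ once $\epsilon$ is small. Applying the hypothesis with the admissible test function $\rho_\epsilon * \varphi$ (for $\varphi\in C_c^\infty(\Omega)$ and $\epsilon$ small relative to the distance of $\supp\varphi$ to $\partial\Omega$) and commuting $\rho_\epsilon *$ past $\ca^*$ yields
\[
(u_\epsilon, \ca^* \varphi)_{L_2(\Omega)}
= (f_\epsilon, \varphi)_{L_2(\Omega)} - \sum_{j=1}^d \big((f_j)_\epsilon, \partial_j\varphi\big)_{L_2(\Omega)}
+ r_\epsilon(\varphi),
\qquad
r_\epsilon(\varphi) := \big(u,\,[\rho_\epsilon *,\ca^*]\varphi\big)_{L_2(\Omega)},
\]
the remainder $r_\epsilon$ collecting the Friedrichs commutators of $\rho_\epsilon *$ with multiplication by the coefficients (acting on $\varphi$ or on its first derivatives). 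Since $u_\epsilon$ is smooth, the left-hand side is the weak pairing of the divergence-form operator applied to $u_\epsilon$, so $u_\epsilon\in W^{1,2}_\loc$ is a weak solution of $\ca u_\epsilon = f_\epsilon + \sum_j\partial_j(f_j)_\epsilon + r_\epsilon$ on any fixed compact subset of $\Omega$, provided $\epsilon$ is small.

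The one estimate that is not routine — and the only place where the Lipschitz regularity $a_{kl},c_k\in W^{1,\infty}(\Omega)$ enters — is that $r_\epsilon$ is bounded in $H^{-1}$ on compact subsets, uniformly in $\epsilon$. The commutators with the merely bounded coefficients $b_k,c_0$, and the terms with $\partial_k\overline{a_{kl}},\partial_k\overline{c_k}\in L_\infty$ that arise when one expands the divergence-form commutators by the product rule, are trivially bounded by $2\|\cdot\|_\infty\|\varphi\|_{H^1}$. For a Lipschitz function $a$ and $\psi\in L_2$ one has the elementary identity
\[
\big([\rho_\epsilon *, a]\,\partial_l\psi\big)(x)
= \int (\partial_l\rho_\epsilon)(x-y)\,\big(a(y)-a(x)\big)\,\psi(y)\,dy \;-\; \big(\rho_\epsilon * ((\partial_l a)\psi)\big)(x),
\]
and since $|a(y)-a(x)|\le\|a\|_{W^{1,\infty}}|x-y|$ while $\partial_l\rho_\epsilon$ is supported in $B(0,\epsilon)$ with $\int|\partial_l\rho_\epsilon|=O(\epsilon^{-1})$, the kernel in the first term passes Schur's test with an $\epsilon$-independent bound $C\|a\|_{W^{1,\infty}}$, while the second term is at most $\|a\|_{W^{1,\infty}}\|\psi\|_{L_2}$. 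Using this with $a=\overline{a_{kl}}$ or $a=\overline{c_k}$ and $\psi=\partial_k\varphi$ — so that even the top-order commutator $[\rho_\epsilon *,\overline{a_{kl}}]\partial_k\partial_l\varphi=[\rho_\epsilon *,\overline{a_{kl}}]\partial_l(\partial_k\varphi)$ costs only $\|\partial_k\varphi\|_{L_2}$ — one obtains $|r_\epsilon(\varphi)|\le C\|u\|_{L_2(\Omega)}\|\varphi\|_{H^1}$ with $C$ independent of $\epsilon$, i.e.\ $r_\epsilon$ bounded in $H^{-1}_\loc$.

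With the uniform remainder bound in hand, I would test the weak equation for $u_\epsilon$ against $\eta^2 u_\epsilon$ for a cut-off $\eta\in C_c^\infty(\Omega)$, use uniform ellipticity together with Young's inequality to absorb the gradient cross-terms (the lower-order coefficients of $\ca$ being harmless here), and arrive at the interior Caccioppoli estimate
\[
\|\eta\,\nabla u_\epsilon\|_{L_2(\Omega)}^2
\le C\Big(\|u_\epsilon\|_{L_2(\supp\eta)}^2 + \|f_\epsilon\|_{L_2(\supp\eta)}^2 + \sum_j\|(f_j)_\epsilon\|_{L_2(\supp\eta)}^2 + \|u\|_{L_2(\Omega)}^2\Big),
\]
with $C$ independent of $\epsilon$. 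Because $u_\epsilon\to u$, $f_\epsilon\to f$ and $(f_j)_\epsilon\to f_j$ in $L_2(K)$ for every $K\Subset\Omega$, the right-hand side stays bounded as $\epsilon\downarrow 0$, so $\{\nabla u_\epsilon\}$ is bounded in $L_2(K)^d$ for each $K\Subset\Omega$. A diagonal argument over an exhaustion of $\Omega$ together with weak-$L_2$ compactness produces a subsequence with $\nabla u_{\epsilon_m}\rightharpoonup v$ in $L_2(K)^d$ for all $K$, and passing to the limit in $\int u_{\epsilon_m}\,\divv\phi = -\int\nabla u_{\epsilon_m}\cdot\phi$ for $\phi\in C_c^\infty(\Omega)^d$ identifies $v$ with the distributional gradient of $u$; hence $u\in W^{1,2}_\loc(\Omega)$. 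The main obstacle is precisely the uniform $H^{-1}$-bound on $r_\epsilon$ (the Friedrichs commutator estimate, where $a_{kl},c_k\in W^{1,\infty}$ is essential); the rest is the classical interior energy estimate plus a weak-compactness argument.
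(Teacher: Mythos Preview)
Your argument is correct and follows a genuinely different route from the paper's proof. The paper localises by multiplying $u$ with a cut-off $\chi\in C_c^\infty(\Omega)$, extends the coefficients to all of $\Ri^d$, and then works with the m-sectorial operator $A$ associated with the form $\gota$ on $W^{1,2}(\Ri^d)$: after computing that $(\chi u, A^*\varphi)=(g,\varphi)-\sum_j(g_j,\partial_j\varphi)$ for explicit $g,g_j\in L_2(\Ri^d)$ and extending this to all $\varphi\in D(A^*)$ by a core argument, one inverts $A^*$ and tests with $w=\partial_m v$, using that the Riesz-type operators $\partial_j(A^*)^{-1}\partial_m$ are bounded on $L_2(\Ri^d)$ to conclude $\chi u\in W^{1,2}(\Ri^d)$.

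By contrast, you stay entirely within $\Omega$ and use the classical Friedrichs mollification technique: mollify $u$, control the commutator $[\rho_\epsilon *,\ca^*]$ in $H^{-1}_\loc$ uniformly in $\epsilon$ via the Lipschitz regularity of $a_{kl},c_k$, and then run the interior Caccioppoli estimate on the smooth functions $u_\epsilon$ to get a uniform $W^{1,2}$-bound. Your approach is more elementary and self-contained --- it uses no abstract operator theory, no extension to $\Ri^d$, no core theorems --- and makes the role of the $W^{1,\infty}$ assumption on $a_{kl},c_k$ particularly transparent (it is exactly what the Friedrichs commutator bound needs). The paper's approach, on the other hand, packages the analytic work into the single fact that $\partial_j(A^*)^{-1}\partial_m$ is $L_2$-bounded, and this formulation adapts more readily to the $L_p$ setting discussed in the remark following the proposition, where the corresponding Riesz bounds are available from the literature.
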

\begin{proof}
Fix $\chi \in C_c^\infty(\Omega)$.
We shall show that $\chi \, u \in W^{1,2}(\Omega)$.
Without loss of generality we may assume that 
$a_{kl}, c_k \in W^{1,\infty}(\Ri^d)$ and 
$b_k,c_0 \in L_\infty(\Ri^d)$, and that (\ref{epmaxapp1;1}) is valid for all 
$\xi \in \Ci^d$ and almost every $x \in \Ri^d$.
Define the form $\gota \colon W^{1,2}(\Ri^d) \times W^{1,2}(\Ri^d) \to \Ci$ by 
\[
\gota(v,w) 
= \int_{\Ri^d} \sum_{k,l=1}^d a_{kl} \, (\partial_k v) \, \overline{\partial_l w} 
   + \int_{\Ri^d} \sum_{k=1}^d b_k \, v \, \overline{\partial_k w} 
   + \int_{\Ri^d} \sum_{k=1}^d c_k \, (\partial_k v) \, \overline w
   + \int_{\Ri^d} c_0 \, v \, \overline w
.  \]
Then $\gota$ is a closed sectorial form.
Let $A$ be the m-sectorial operator associated with $\gota$.
Note that we have $C_c^\infty(\Ri^d) \subset D(A^*)$ and 
that $A^* \varphi = \ca^* \varphi$ for all $\varphi \in C_c^\infty(\Omega)$.

Define 
\[
g = \chi \, f - \sum_{j=1}^d (\partial_j \chi) \, f_j
   + \sum_{k,l=1}^d u \, \partial_k (a_{kl} \, \partial_l \chi)
   - \sum_{k=1}^d b_k \, u \, \partial_k \chi
   + \sum_{k=1}^d c_k \, u \, \partial_k \chi
\]
and for all $j \in \{ 1,\ldots,d \} $ define 
\[
g_j = \chi \, f_j - \sum_{l=1}^d a_{jl} \, u \, \partial_l \chi
         - \sum_{k=1}^d a_{kj} \, u \, \partial_k \chi.
\]
Then $g,g_2 \in L_{2,c}(\Omega) \subset L_2(\Ri^d)$.
It is a tedious elementary exercise to show that 
\begin{equation}
(\chi \, u,A^* \varphi)_{L_2(\Ri^d)} 
= (g,\varphi)_{L_2(\Ri^d)} - \sum_{j=1}^d (g_j, \partial_j \varphi)_{L_2(\Ri^d)}
\label{epmaxapp1;2}
\end{equation}
for all $\varphi \in C_c^\infty(\Ri^d)$.
It follows from \cite[Theorem~9.8]{Agmrev} that $C_c^\infty(\Ri^d)$ is a 
core for $A^*$. Obviously $D(A^*) \subset W^{1,2}(\Ri^d)$.
Hence (\ref{epmaxapp1;2}) is valid for all $\varphi \in D(A^*)$.

Without loss of generality we may assume
that $\RRe c_0$ is large enough such that $A^*$ is invertible.
Then 
\[
(\chi \, u,w)_{L_2(\Ri^d)} 
= (g,(A^*)^{-1} w)_{L_2(\Ri^d)} 
    - \sum_{j=1}^d (g_j, \partial_j \, (A^*)^{-1} w)_{L_2(\Ri^d)}
\]
for all $w \in L_2(\Ri^d)$.
Let $m \in \{ 1,\ldots,d \} $.
Then 
\[
(\chi \, u, \partial_m v)_{L_2(\Ri^d)} 
= (g,(A^*)^{-1} \, \partial_m v)_{L_2(\Ri^d)} 
   - \sum_{j=1}^d (g_j, \partial_j \, (A^*)^{-1} \, \partial_m v)_{L_2(\Ri^d)}
\]
for all $v \in W^{1,2}(\Ri^d)$.
It follows from the ellipticity condition that the operator
\[
\partial_j \, (A^*)^{-1} \, \partial_m
\]
from $W^{1,2}(\Ri^d)$ into $L_2(\Ri^d)$ has a bounded extension 
from $L_2(\Ri^d)$ into $L_2(\Ri^d)$
for all $j \in \{ 1,\ldots,d \} $.
Since $D(A) \subset W^{1,2}(\Ri^d)$, it follows by duality 
that there is an $M > 0$ such that 
$|(\chi \, u, (A^*)^{-1} \, \partial_m v)_{L_2(\Ri^d)}| \leq M \, \|v\|_{L_2(\Ri^d)}$
for all $v \in W^{1,2}(\Ri^d)$.
Hence $\chi \, u \in W^{1,2}(\Ri^d)$, as required.
\end{proof}

We emphasise that all the coefficients of the operator in Proposition~\ref{pmaxapp1}
may be complex valued, including the second-order coefficients.

\begin{remark} \label{rmaxapp2}
Suppose in addition to the conditions of the coefficients
in Proposition~\ref{pmaxapp1} that $b_k \in W^{1,\infty}(\Ri^d)$
for all $k \in \{ 1,\ldots,d \} $.
Let $p \in (1,\infty)$, $u,f,f_1,\ldots,f_d \in L_p(\Omega)$ and suppose
\[
\int_\Omega u \, \overline{\ca^* \varphi}
= \int_\Omega f \, \overline \varphi 
    - \sum_{j=1}^d \int_\Omega f_j \, \overline{\partial_j \varphi}
\]
for all $\varphi \in C_c^\infty(\Omega)$.
Then $u \in W^{1,p}_\loc(\Omega)$.
The proof is almost the same.
The operator $A$ is consistent with an operator $A_p$ in $L_p(\Ri^d)$
such that the semigroups generated by $-A$ and $-A_p$ are consistent.
Let $q \in (1,\infty)$ be the dual exponent of $p$.
The inclusions $D(A_p^*) \subset W^{1,q}(\Ri^d)$ 
and $D(A_p) \subset W^{1,p}(\Ri^d)$ are in \cite[Corollary 3.8]{ER19}.
The bounded extension of $\partial_j \, (A_p^*)^{-1} \, \partial_m$
follows from \cite[Theorem 1.4]{ER19}.

If in addition $f_1 = \ldots = f_d = 0$, then 
one can deduce as in the proof of Proposition~\ref{pmaxapp1}
that $\chi \, u \in D(A_p^{**})$.
Since $D(A_p) = W^{2,p}(\Ri^d)$ (see \cite[Proposition~5.1]{ER19}),
one establishes that $u \in W^{2,p}_\loc(\Omega)$.
\end{remark}

For real coefficients a slightly more generally version of Remark~\ref{rmaxapp2}
has been proved by Zhang and Bao in~\cite[Theorem~1.5]{ZhangBao2}, where
$f$ is even allowed to be an element of a larger $L_q$-space if $d \geq 3$
and a Lorentz space if $d = 2$. We refer to \cite{ZhangBao2} for an account on known 
regularity results for very weak solutions of elliptic operators.

\subsection*{Acknowledgement:} 
We are grateful to Daniel Daners for illuminating discussions 
on semilinear problems, the weak Harnack inequality and Hopf's maximum principle.
We wish to thank the referee for the comments and suggestions.
The second-named author is most grateful for the hospitality extended
to him during a fruitful stay at the University of Ulm.
He wishes to thank the University of Ulm for financial support.

\subsection*{Funding:} 
Part of this work is supported by the Marsden Fund Council from Government funding,
administered by the Royal Society of New Zealand.


\begin{thebibliography}{10}

\bibitem[1]{Agmrev}
{\sc Agmon, S.}, {\em Lectures on elliptic boundary value problems}.
\newblock AMS Chelsea Publishing, Providence, RI, 2010.

\bibitem[2]{AN}
{\sc Apushkinskaya, D. E. {\rm and} Nazarov, A. I.}, A counterexample to 
the Hopf--Oleinik lemma (elliptic case).
\newblock {\em Anal. PDE} {\bf 9} (2016), 439--458.

\bibitem[3]{Are9}
{\sc Arendt, W.}, Resolvent positive operators and inhomogeneous boundary
  conditions.
\newblock {\em Ann. Scuola Norm. Sup. Pisa Cl. Sci. (4)} {\bf 29} (2000),
  639--670.

\bibitem[4]{Are4}
\leavevmode\vrule height 2pt depth -1.6pt width 23pt, Different domains induce
  different heat semigroups on $C_0(\Omega)$.
\newblock In {\sc Lumer, G. {\rm and} Weis, L.}, eds., {\em Evolution equations
  and their applications in physical and life sciences}, vol.\ 215 of Lecture
  Notes in Pure and Applied Mathematics. Marcel Dekker, New York, 2001,  1--14.

\bibitem[5]{ABHN}
{\sc Arendt, W., Batty, C. J.~K., Hieber, M. {\rm and} Neubrander, F.}, {\em
  Vector-valued Laplace transforms and Cauchy problems}.
\newblock Monographs in Mathematics 96. Birkh{\"a}user, Basel, 2001.

\bibitem[6]{ABenilan}
{\sc Arendt, W. {\rm and} B\'enilan, P.}, Wiener regularity and heat semigroups
  on spaces of continuous functions.
\newblock In {\em Topics in nonlinear analysis. The Herbert Amann anniversary
  volume}, vol.\ 35 of Progress in Nonlinear Differential Equations.
  Birkh{\"a}user Verlag, Basel, 1999,  29--49.

\bibitem[7]{Nag}
{\sc W. Arendt, A. Grabosch, G. Greiner, U. Groh, H. P. Lotz, U. Moustakas, 
  R. Nagel, F. Neubrander and U. Schlotterbeck}, {\em One-parameter semigroups of positive operators},
  Lecture Notes in Mathematics 1184, Berlin etc., 1986. Springer-Verlag.

\bibitem[8]{AE9}
{\sc Arendt, W. {\rm and} Elst, A. F.~M. ter}, The Dirichlet problem without
  the maximum principle.
\newblock {\em Annales de l'Institut Fourier} {\bf 69} (2019),  763--782.

\bibitem[9]{AE8}
\leavevmode\vrule height 2pt depth -1.6pt width 23pt, The Dirichlet-to-Neumann
  operator on $C(\partial \Omega)$.
\newblock {\em Ann. Sc. Norm. Super. Pisa Cl. Sci.} (2019).
\newblock arXiv:1707.05556. In press.

\bibitem[10]{BatkaiKR}
{\sc B\'atkai, A., {Kramar Fijav\v z}, M. {\rm and} Rhandi, A.}, {\em Positive
  operator semigroups}.
\newblock Operator Theory Advances and Applications 257. Birkh{\"a}user, Cham,
  2017.

\bibitem[11]{Bony}
{\sc Bony, J.-M.}, Principe du maximum dans les espaces de Sobolev.
\newblock {\em C. R. Acad. Sci., Paris, S\'er. A} {\bf 265} (1967),  333--336.

\bibitem[12]{Daners}
{\sc Daners, D.}, Heat kernel estimates for operators with boundary conditions.
\newblock {\em Math.\ Nachr.} {\bf 217} (2000),  13--41.

\bibitem[13]{Daners8}
\leavevmode\vrule height 2pt depth -1.6pt width 23pt, Inverse positivity for
  general Robin problems on Lipschitz domains.
\newblock {\em Arch. Math} {\bf 92} (2009),  57--69.

\bibitem[14]{DanersLG1}
{\sc Daners, D. {\rm and} L\'opez-G\'omez, J.}, Global dynamics of generalized
  logistic equations.
\newblock {\em Adv. Nonlinear Stud.} {\bf 18} (2018),  217--236.

\bibitem[15]{ERe2}
{\sc Elst, A. F.~M. ter {\rm and} Rehberg, J.}, H{\"o}lder estimates for
  second-order operators on domains with rough boundary.
\newblock {\em Adv. Diff. Equ.} {\bf 20} (2015),  299--360.

\bibitem[16]{ER19}
{\sc Elst, A. F.~M. ter {\rm and} Robinson, D.~W.}, Second-order strongly
  elliptic operators on Lie groups with H{\"o}lder continuous coefficients.
\newblock {\em J. Austr.\ Math.\ Soc.\ {\rm (}Series A{\rm )}} {\bf 63} (1997),
   297--363.

\bibitem[17]{Evans}
{\sc Evans, L.~C.}, {\em Partial differential equations}.
\newblock Graduate Studies in Mathematics 19. Amer. Math. Soc., Providence, RI,
  1998.

\bibitem[18]{GT}
{\sc Gilbarg, D. {\rm and} Trudinger, N.~S.}, {\em Elliptic partial
  differential equations of second order}.
\newblock Second edition, Grundlehren der mathematischen Wissenschaften 224.
  Springer-Verlag, Berlin etc., 1983.

\bibitem[19]{HR1}
{\sc Hewitt, E. {\rm and} Ross, K.~A.}, {\em Abstract harmonic analysis I}.
\newblock Second edition, Grundlehren der mathematischen Wissenschaften 115.
  Springer-Verlag, Berlin etc., 1979.

\bibitem[20]{Liebe3}
{\sc Lieberman, G.~M.}, {\em Second order parabolic differential equations}.
\newblock World Scientific Publishing Co., Inc., River Edge, NJ, 1996.

\bibitem[21]{Lions}
{\sc Lions, P.-L.}, A remark on Bony maximum principle.
\newblock {\em Proc. Am. Math. Soc.} {\bf 88} (1983),  503--508.

\bibitem[22]{LopezGomez1}
{\sc L\'opez-G\'omez, J.}, {\em Linear second order elliptic operators}.
\newblock World Scientific Publishing Co., Hackensack, NJ, 2013.

\bibitem[23]{MajewskiRobinson}
{\sc Majewski, A. {\rm and} Robinson, D.~W.}, Strictly positive and strongly
  positive semigroups.
\newblock {\em J. Austr.\ Math.\ Soc.\ {\rm (}Series A{\rm )}} {\bf 34} (1983),  36--48.

\bibitem[24]{Nit4}
{\sc Nittka, R.}, Regularity of solutions of linear second order elliptic and
  parabolic boundary value problems on Lipschitz domains.
\newblock {\em J. Differential Equations} {\bf 251} (2011),  860--880.

\bibitem[25]{Ouh5}
{\sc Ouhabaz, E.-M.}, {\em Analysis of heat equations on domains}, vol.\ 31 of
  London Mathematical Society Monographs Series.
\newblock Princeton University Press, Princeton, NJ, 2005.

\bibitem[26]{Pag2}
{\sc Pagter, B. de}, Irreducible compact operators.
\newblock {\em Math. Z.} {\bf 192} (1986),  149--153.

\bibitem[27]{Persson}
{\sc Persson, A.}, Compact linear mappings between interpolation spaces.
\newblock {\em Ark. Mat.} {\bf 5} (1964),  215--210.

\bibitem[28]{Schae2}
{\sc Schaefer, H.~H.}, {\em Banach lattices and positive operators}.
\newblock Grundlehren der mathematischen Wissenschaften 215. Springer-Verlag,
  New York, 1974.

\bibitem[29]{ZhangBao2}
{\sc Zhang, W. {\rm and} Bao, J.}, Regularity of very weak solutions for
  nonhomogeneous elliptic equation.
\newblock {\em Commun. Contemp. Math.} {\bf 15} (2013).

\end{thebibliography}
\end{document}